\numberwithin{equation}{section}
\newtheorem*{rep@theorem}{\rep@title}
\newcommand{\newreptheorem}[2]{%
\newenvironment{rep#1}[1]{%
 \def\rep@title{#2 \ref{##1}}%
 \begin{rep@theorem}}%
 {\end{rep@theorem}}}
\theoremstyle{plain}
\newtheorem{theorem}{Theorem}[section]
\newtheorem{lemma}[theorem]{Lemma}
\newtheorem{corollary}[theorem]{Corollary}
\newtheorem{proposition}[theorem]{Proposition}
\theoremstyle{definition}
\newtheorem{definition}[theorem]{Definition}
\newtheorem{example}[theorem]{Example}
\newtheorem{remark}[theorem]{Remark}
\newcommand{\Gl}{\operatorname{GL}}
\newcommand{\Pgl}{\operatorname{PGL}}
\newcommand{\Sl}{\operatorname{SL}}
\newcommand{\defi}{\stackrel{\text{\tiny def}}{=}}
\newcommand{\degree}{\operatorname{deg}}
\newcommand{\N}{\operatorname{N}}
\newcommand{\An}{\operatorname{An}}
\newcommand{\order}{\operatorname{ord}}
\newcommand{\wt}{\operatorname{wt}}
\newcommand{\Spec}{\operatorname{Spec} }
\begin{document}

\title{Weierstrass points on the Drinfeld modular curve $X_0(\mathfrak{p})$}
\author{Christelle Vincent}
\address{Department of Mathematics, Stanford University, California 94305}
\email{cvincent@stanford.edu}
\thanks{The author thanks the support of an NSERC graduate fellowship}

\begin{abstract}
Consider the Drinfeld modular curve $X_0(\mathfrak{p})$ for $\mathfrak{p}$ a prime ideal of $\mathbb{F}_q[T]$. It was previously known that if $j$ is the $j$-invariant of a Weierstrass point of $X_0(\mathfrak{p})$, then the reduction of $j$ modulo $\mathfrak{p}$ is a supersingular $j$-invariant. In this paper we show the converse: Every supersingular $j$-invariant is the reduction modulo $\mathfrak{p}$ of the $j$-invariant of a Weierstrass point of $X_0(\mathfrak{p})$. 

\end{abstract}

\maketitle

\section{Introduction and Statement of Results}

Given a smooth irreducible projective curve of genus $g\geq 2$ defined over an algebraically closed field of characteristic $0$, we say that a point $P$ on $X$ is a \emph{Weierstrass point} if there is a nonzero rational function $F$ on $X$ with a pole of order less than or equal to $g$ at $P$ and regular everywhere else. In this case, the set of such points is non-empty and finite.

Because of the geometric significance of such points, given a curve of arithmetic import it is natural to study its Weierstrass points. Such work was done by Atkin, Hasse, Lehner and Newman, Ogg, Petersson, and Schoeneberg for three families that are important to number theorists: the Fermat curves, and the modular curves $X(N)$ and $X_0(N)$. The interested reader should see Rohrlich's 1982 paper \cite{rohrlich1} for a concise account of the results and complete references. In the same paper, Rohrlich exhibited a modular form $W(z)$ for $\Gamma_0(N)$ whose divisor encodes information about the Weierstrass points of $X_0(N)$, the modular Wronskian. In later work \cite{rohrlich2}, restricting his attention to $\Gamma_0(\ell)$ for $\ell$ a prime, he was able to exhibit a form for $\Sl_2(\mathbb{Z})$ congruent to $W(z)$ modulo $\ell$. Building on these results, later work of Ahlgren and Ono \cite{ahlgrenono} showed that not only were the elliptic curves underlying the Weierstrass points of $X_0(\ell)$ supersingular at $\ell$, which was a result already obtained by Ogg \cite{ogg}, but furthermore that 
\begin{equation*}
\prod_{Q\in X_0(\ell)} (x-j(Q))^{\wt(Q)} \equiv \prod_{\substack{E/\overline{\mathbb{F}}_{\ell} \\ E \text{ supersingular}}}(x-j(E))^{g_{\ell}(g_{\ell}-1)} \pmod{\ell},
\end{equation*} 
where the quantity $\wt(Q)$ is a non-negative integer which is positive if and only if $Q$ is a Weierstrass point and which we will define in Section \ref{weierstrass}, Definition \ref{D:weight}, and $g_{\ell}$ is the genus of $X_0(\ell)$.


The situation where the curve is defined over an algebraically closed field of positive characteristic is more complicated: It can be the case that for each point $P$ there exists a nonzero rational function with a pole of order less than or equal to the genus of the curve at $P$ and regular elsewhere. Accordingly, to ensure that the set of Weierstrass points be finite, a modified definition of Weierstrass points must be used, which will be given in Section \ref{theory}.

We consider in this paper the so-called Drinfeld setting, which offers for function fields some structures playing roles analogous to those played by elliptic curves, modular forms and modular curves for number fields. More precisely, we will study the Weierstrass points on a family of Drinfeld modular curves which is denoted by $X_0(\mathfrak{p})$, where $\mathfrak{p}$ is a prime ideal of $\mathbb{F}_q[T]$. These curves are smooth, irreducible and projective, and defined over a complete, algebraically closed field of positive characteristic. As such, it is natural to wish to study their Weierstrass points. Since they are (coarse) moduli spaces of Drinfeld modules of rank $2$ with a specified level structure, we may ask what can be said about the Drinfeld modules underlying the Weierstrass points of $X_0(\mathfrak{p})$.



As far as we can tell, the only result in this direction which was known previously was obtained by Baker \cite{baker} as a result of his work on the connection between linear systems on a curve and linear systems on the dual graph of a regular semistable model of the curve. As a corollary of one of his results, one can show that the Drinfeld modules underlying the Weierstrass points of $X_0(\mathfrak{p})$ have supersingular reduction at $\mathfrak{p}$. 

In this paper we prove a converse of Baker's result:

\begin{theorem}\label{T:main}
Let $q$ be odd and let $\pi(T) \in \mathbb{F}_q[T]$ be a prime polynomial, generating the prime ideal $\mathfrak{p}$. Then each supersingular Drinfeld module over $\overline{\mathbb{F}}_{\mathfrak{p}}$ is the reduction modulo $\mathfrak{p}$ of a Weierstrass point of $X_0(\mathfrak{p})$.
\end{theorem}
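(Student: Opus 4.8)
The plan is to carry out, in the Drinfeld setting, the strategy of Rohrlich \cite{rohrlich1,rohrlich2} and of Ahlgren--Ono \cite{ahlgrenono}: encode the Weierstrass data of $X_0(\mathfrak{p})$ in the divisor of a Drinfeld modular form for $\Gamma_0(\mathfrak{p})$, and show that this form, reduced modulo $\mathfrak{p}$, is a nonzero scalar times a positive power of the supersingular polynomial. Concretely I would establish a congruence of the shape
\[
\prod_{Q \in X_0(\mathfrak{p})} (x - j(Q))^{\wt(Q)} \;\equiv\; c \cdot \Big(\,\prod_{\substack{E/\overline{\mathbb{F}}_{\mathfrak{p}} \\ E \text{ supersingular}}} (x - j(E))\Big)^{N} \pmod{\mathfrak{p}}
\]
for some $c \in \overline{\mathbb{F}}_{\mathfrak{p}}^{\times}$ and some integer $N \ge 1$; since every supersingular $j$-invariant is then a root of the right-hand side, Theorem~\ref{T:main} follows at once. (The statement is non-vacuous precisely when $g := g(X_0(\mathfrak{p})) \ge 2$, which I assume henceforth, so that $X_0(\mathfrak{p})$ carries Weierstrass points.)

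First I would construct the relevant form. Using the identification $f \mapsto f(z)\,dz$ of the holomorphic differentials on $X_0(\mathfrak{p})$ with the space of weight-$2$ ``double cusp forms'' for $\Gamma_0(\mathfrak{p})$, fix a basis $f_1,\dots,f_g$, let $0 = \epsilon_0 < \epsilon_1 < \cdots < \epsilon_{g-1}$ be the orders of the canonical linear system (so $\epsilon_i = i$ in the classical case), and set
\[
W \;=\; \det\!\Big( D_t^{(\epsilon_{i-1})} f_j \Big)_{1 \le i,\,j \le g},
\]
where $D_t^{(n)}$ denotes the $n$-th hyperderivative with respect to the uniformizer $t$ at the cusp $\infty$. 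Exactly as in the classical case the failure of the $D_t^{(n)}$ to preserve modularity cancels in the determinant, so $W$ is a Drinfeld modular form for $\Gamma_0(\mathfrak{p})$ whose divisor is $\sum_Q \wt(Q)[Q]$ together with an explicit fixed divisor supported at the cusps and the elliptic points; in particular its ``interesting'' part is supported exactly at the Weierstrass points (this is the content of Section~\ref{weierstrass}).

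Next I would reduce $W$ modulo $\mathfrak{p}$ and identify the reduction. I would use the semistable model of $X_0(\mathfrak{p})$ over the completion at $\mathfrak{p}$: by the work of Drinfeld and of Gekeler--Reversat its special fibre is the union of two copies of $X(1)_{\overline{\mathbb{F}}_{\mathfrak{p}}} \cong \mathbb{P}^1_j$ crossing transversally at the supersingular points, so that $g = \#\{\text{supersingular } j\text{-invariants}\} - 1$. A holomorphic differential on $X_0(\mathfrak{p})$ reduces on each component to a rational differential on $\mathbb{P}^1_j$ that is regular away from the supersingular locus and has at worst simple poles there, and the reductions of $f_1,\dots,f_g$ restrict on a component to a basis of the full $g$-dimensional space of such differentials. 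Following Rohrlich's second paper, one produces a companion form $\widehat{W}$ for the full modular group $\Gl_2(\mathbb{F}_q[T])$ with $\widehat{W} \equiv W \pmod{\mathfrak{p}}$, and computes $\widehat{W} \bmod \mathfrak{p}$ on a rational component by a Wronskian calculation: writing the reduced differentials as $P_i(j)/s_{\mathfrak{p}}(j)$ times the reference differential, where $s_{\mathfrak{p}}$ is the supersingular polynomial and $\deg P_i \le g-1$, and using that a basis of $\{1,j,\dots,j^{g-1}\}$ has a nonzero (hyperderivative) Wronskian — necessarily a constant, as $\mathbb{P}^1$ admits no nonzero effective divisor invariant under the relevant automorphisms — one obtains, after restoring the correct weight by a power of the Drinfeld Hasse invariant (the level-one form modulo $\mathfrak{p}$, of weight $q^{\deg \pi} - 1$, whose zero divisor is the supersingular locus), that $\widehat{W} \bmod \mathfrak{p}$ equals a nonzero constant times a positive power of $s_{\mathfrak{p}}$. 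Translating this back through the $j$-line gives the displayed congruence with $N \ge 1$; combined with $\operatorname{div}(W) = \sum_Q \wt(Q)[Q] + (\text{fixed divisor})$ and with Baker's theorem \cite{baker} — which forces the Weierstrass points to reduce into the supersingular locus, hence the fixed divisor to reduce away from it — this shows that every supersingular $j$-invariant is the reduction of some $j(Q)$ with $\wt(Q) > 0$, proving Theorem~\ref{T:main}.

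The hard part is the reduction step, and within it two points demand care. The first is the mod-$\mathfrak{p}$ description of weight-$2$ double cusp forms for $\Gamma_0(\mathfrak{p})$ in terms of the two rational components of the special fibre — the Drinfeld analogue of the Deligne--Rapoport picture and of Serre's theory of modular forms modulo $p$ — together with the precise behaviour at the elliptic points of $\Gamma_0(\mathfrak{p})$ and at the nodes; here one must keep careful track of the relative dualizing sheaf so that the naive poles of the reduced differentials along the supersingular locus are reconciled with the effectivity of $\operatorname{div}(W)$, and it is also at this point that the hypothesis that $q$ is odd enters, to control the automorphism groups (hence the local structure of the model) at the relevant points. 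The second is that the canonical sequence $\epsilon_0 < \cdots < \epsilon_{g-1}$ may be non-classical once $\deg \pi$ is large compared with the characteristic of $\mathbb{F}_q$, so one must work with hyperderivatives throughout and, in particular, verify that $W \not\equiv 0 \pmod{\mathfrak{p}}$ by inspecting the leading $t$-expansion coefficients; the remaining bookkeeping with weights and types is then routine.
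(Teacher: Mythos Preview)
Your overall architecture is recognizably Rohrlich/Ahlgren--Ono, but the mechanism you propose for the key reduction step is \emph{not} the one the paper uses, and your sketch aims for a stronger conclusion than the paper actually obtains in general.

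The paper does not compute $\widehat{W}\bmod\mathfrak{p}$ geometrically on the components of the special fibre. Instead it passes from $\Gamma_0(\mathfrak{p})$ to $\Gl_2(A)$ via the \emph{norm}: setting $\mathcal{W}=\widetilde{\N(W)}$, one has $\mathcal{W}\equiv W^2\pmod{\mathfrak{p}}$ (Theorem~\ref{normtheorem}), and separately via the \emph{trace} correspondence (Theorem~\ref{tracetheorem}) one produces a level-one form $F$ of weight $g_{\mathfrak{p}}(g_{\mathfrak{p}}+q^d)$ with $W\equiv F\pmod{\mathfrak{p}}$. Hence $\mathcal{W}\equiv F^2\pmod{\mathfrak{p}}$, and the weight drop $(q^d+1)g_{\mathfrak{p}}(g_{\mathfrak{p}}+1)-2g_{\mathfrak{p}}(g_{\mathfrak{p}}+q^d)=g_{\mathfrak{p}}(g_{\mathfrak{p}}-1)(q^d-1)$ forces, by the Dobi--Wage--Wang filtration result (Proposition~\ref{dobiwagewang}), that $S_{\mathfrak{p}}(x)^{g_{\mathfrak{p}}(g_{\mathfrak{p}}-1)}$ divides $x^a P(\mathcal{W},x)$. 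Theorem~\ref{T:epsilon} identifies $P(\mathcal{W},x)$ with $x^{\epsilon(d)}F_{\mathfrak{p}}(x)$, and a short inequality $a+\epsilon(d)<g_{\mathfrak{p}}(g_{\mathfrak{p}}-1)$ handles $j=0$ in the odd-$d$ case. This is a divisibility argument, not an equality: the paper explicitly says (Section~\ref{refinement}) that the unknown factor $P(F^2,x)$ blocks the full Ahlgren--Ono congruence except in the special case $q=p$, $d=3$ (Theorems~\ref{T:computation}, \ref{T:ahlgrenono}). Your proposal to get the exact congruence $F_{\mathfrak{p}}(x)\equiv c\,S_{\mathfrak{p}}(x)^N$ in general therefore claims more than the paper proves, and the step ``the Wronskian on $\mathbb{P}^1$ is a nonzero constant, hence $\widehat{W}\bmod\mathfrak{p}$ is a power of $s_{\mathfrak{p}}$'' is exactly where the difficulty lies.

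Two smaller points. First, the hypothesis $q$ odd is not about automorphism groups on the special fibre; it enters because the paper needs $W$ to be an eigenform for the Fricke involution (Proposition~\ref{T:qodd}, proved by diagonalizing $W_{\mathfrak{p}}$ on $M^2_{2,1}(\Gamma_0(\mathfrak{p}))$), which is the hypothesis of the norm congruence in Theorem~\ref{normtheorem}. Second, your worry about non-classical canonical orders is unnecessary: Proposition~\ref{armana} (Armana) shows $X_0(\mathfrak{p})$ has classical gap sequence, so the paper's Wronskian simply uses $D_0,\dots,D_{g_{\mathfrak{p}}-1}$.
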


To obtain this theorem we first introduce the necessary concepts and objects to define a form $W(z)$ analogous to the form defined by Rohrlich in \cite{rohrlich1}. By this we mean that the divisor of $W(z)$ captures information about the Weierstrass points of $X_0(\mathfrak{p})$, and the $u$-series coefficients of $W(z)$ at the cusp $\infty$ are rational and $\mathfrak{p}$-integral. It is the study of this form, using the main theorems of \cite{normandtrace}, that allows us to use a powerful theorem on the arithmetic of the reduction of Drinfeld modular forms modulo a prime ideal $\mathfrak{p}$ and obtain Theorem \ref{T:main}. 

\begin{remark}
The hypothesis that $q$ be odd in our main theorem is a consequence of Theorem \ref{T:qodd}, in which we show that $W(z)$ is an eigenform of the Fricke involution in odd characteristic. In turn, this hypothesis is necessary to apply one of the theorems of \cite{normandtrace} (repeated here as Theorem \ref{normtheorem}). We expect that $W(z)$ is an eigenform of the Fricke involution in even characteristic as well, but the argument in this case would most likely rely on some geometric property of $W(z)$ on $X_0(\mathfrak{p})$ instead of the more ``modular" argument we present here. Granting this hypothesis, the proof of Theorem \ref{T:main} would carry through for $q>2$. The exclusion of the case $q=2$ would now come from the other main theorem of \cite{normandtrace} (repeated here as Theorem \ref{tracetheorem}); see the remark following the proof of Theorem 1.1 in \cite{normandtrace} for a discussion of how this restriction arises.
\end{remark}

In Section \ref{specialcase}, we show that when $q=p$ is an odd prime and $\pi(T)$ has degree 3, we can perform some explicit computations to obtain that

\begin{theorem}\label{T:computation}
If $p$ is odd, $\pi(T) \in \mathbb{F}_p[T]$ has degree 3, $\mathfrak{p}$ is the ideal generated by $\pi(T)$, and the modular Wronskian on $X_0(\mathfrak{p})$ is denoted by $W(z)$, then
\begin{equation*}
W(z)\equiv (-1)^{(p+1)/2}g^{\frac{p^2(p-1)}{2}}h^{\frac{p^2(p+1)}{2}} \pmod{\mathfrak{p}}.
\end{equation*}
\end{theorem}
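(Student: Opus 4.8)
The plan is to explicitly identify the modular Wronskian $W(z)$ for $X_0(\mathfrak{p})$ when $\deg \pi = 3$ by first pinning down the genus of $X_0(\mathfrak{p})$ in this case and then writing $W(z)$ in terms of the standard Drinfeld modular forms $g$ and $h$ (the forms of weight $q-1$ and $q+1$ generating the ring of modular forms for $\Gl_2(\mathbb{F}_q[T])$) via the Wronskian/divisor description that was set up earlier in the paper. First I would compute $g_{\mathfrak p}$: for $\deg\pi = 3$ the genus formula for $X_0(\mathfrak p)$ (analogous to the classical one, using the number of $\overline{\mathbb F}_{\mathfrak p}$-points, the class number and the number of elliptic points) gives a small explicit value, and in particular the weight of $W(z)$ as a modular form is determined by $g_{\mathfrak p}$ and the structure of $X_0(\mathfrak p)$ — recall that over $X_0(\mathfrak p)$ the Wronskian has weight related to $g_{\mathfrak p}(g_{\mathfrak p}-1)$ times the relevant weight, matching the exponent $p^2(p-1)/2 + \text{(stuff)}$ appearing in the statement.

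Next I would push $W(z)$ down to a form for the full modular group $\Gl_2(\mathbb{F}_q[T])$. Because the paper has already shown (Theorem \ref{T:qodd}) that $W(z)$ is an eigenform of the Fricke involution $W_{\mathfrak p}$, the product $W(z)\cdot (W_{\mathfrak p}W)(z)$ — or rather $W(z)$ itself together with its Fricke image — descends, up to the standard factor, to a form of level one whose weight I can read off. Then I would use the fact that the ring of Drinfeld modular forms of level one is the polynomial ring $\overline{\mathbb F}_{\mathfrak p}[g,h]$ (more precisely $\mathbb{F}_q[T]_{\mathfrak p}[g,h]$ after reducing), so the reduction of the level-one form must be a monomial $c\, g^a h^b$ for the unique $(a,b)$ with $(q-1)a + (q+1)b$ equal to the computed weight and $b$ matching the order of vanishing at the cusp (equivalently, the $u$-expansion valuation, which $h$ controls since $h$ vanishes to order one at $\infty$ while $g$ does not). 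With $q = p$ this forces $a = p^2(p-1)/2$ and $b = p^2(p+1)/2$, and the constant $c$ is then determined by comparing leading $u$-series coefficients — here the sign $(-1)^{(p+1)/2}$ emerges from the Fricke eigenvalue computed in Theorem \ref{T:qodd} and from the normalization of $g$ and $h$.

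The main obstacle I expect is the bookkeeping that links the geometric Wronskian divisor on $X_0(\mathfrak p)$ to a clean weight-and-valuation count for the associated modular form: one must correctly account for the contribution of the cusps ($0$ and $\infty$) and of the elliptic points to the divisor of $W(z)$, and then correctly transport this through the Fricke involution and the degree-$[\,\Gl_2:\Gamma_0(\mathfrak p)\,] = q^3 + q^2$ covering map $X_0(\mathfrak p)\to X(1)$ so that the exponents come out as the stated $p^2(p\mp1)/2$. In particular I would need to be careful that the vanishing of $W(z)$ at the supersingular points (which is the whole point of the paper, via Baker's theorem and its converse) is consistent with the monomial $g^{a}h^{b}$ having exactly the right divisor on $X(1)_{\mathfrak p}$, i.e. that reduction mod $\mathfrak p$ of $h$ vanishes precisely at the supersingular locus to the appropriate order — this is the key input tying Theorem \ref{T:computation} to Theorem \ref{T:main}. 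Once the weight and the valuation at $\infty$ are correctly computed, the identification of the constant and the sign is a short comparison of normalizations, so the genus computation and the divisor bookkeeping are where essentially all the work lies.
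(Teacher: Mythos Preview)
Your proposal has a genuine gap at the crucial step. You assert that ``the reduction of the level-one form must be a monomial $c\, g^a h^b$ for the unique $(a,b)$'' determined by weight and $u$-valuation, but this is not justified: a level-one form of given weight $k$ and type $l$ is in general an isobaric \emph{polynomial} in $g$ and $h$, not a single monomial. The weight condition $(q-1)a + (q+1)b = k$ together with the type cuts out a space of dimension roughly $k/(q^2-1)$, and knowing the order of vanishing at $\infty$ only bounds $b$ from below. Nothing in your outline forces the form to be a monomial, and that is precisely what needs to be proved.

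The paper's approach is quite different and genuinely computational. It uses Theorem~\ref{tracetheorem} (the trace, not the norm) to produce a basis $\{F_0,\dots,F_{g_{\mathfrak p}-1}\}$ of $M^2_{q^d+1,1}(\Gl_2(A))$ with $W(z) \equiv W_D(F_0,\dots,F_{g_{\mathfrak p}-1}) \pmod{\mathfrak p}$. Because the Wronskian determinant is unchanged when the Hasse derivatives $D_n$ are replaced by the Serre operators $\partial_n^{(k)}$ (they differ by lower-column contributions), and because for $q = p$ and $n < p$ one has $\partial^n = n!\,\partial_n^{(k)}$, the entire Wronskian can be computed from the rules $\partial(g) = -h$, $\partial(h) = 0$ applied to the explicit monomial basis $g^{n(q+1)}h^{q^2-q+1-n(q-1)}$, $0 \le n \le q-1$. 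This direct calculation yields $g^{p^2(p-1)/2}h^{p^2(p+1)/2}$ up to a constant, and the constant is then fixed via Proposition~\ref{goodordercusp} and a leading-coefficient comparison. The restriction $q = p$ is essential precisely so that all $p-1$ derivatives needed lie in the range where $\partial_n$ reduces to iterates of $\partial$; your outline has no analogue of this mechanism.

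There are also a few factual slips: the index $[\Gl_2(A):\Gamma_0(\mathfrak p)]$ is $q^d+1$, not $q^3+q^2$; and $h$ does not vanish at the supersingular locus --- it is nonzero on all of $\Omega$ and vanishes only at the cusp. It is $g_d$ whose zeros modulo $\mathfrak p$ give the supersingular $j$-invariants. Finally, the sign $(-1)^{(p+1)/2}$ arises from the leading coefficient of $h^{p^2(p+1)/2}$ (since $h$ has leading coefficient $-1$), not from a Fricke eigenvalue.
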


Here $g$ and $h$ are explicit Drinfeld modular forms which will be defined in Section \ref{drinfeldthry}. Theorem \ref{T:computation} is an analogue of a result obtained by Rorhlich in \cite{rohrlich2}. This explicit computation allows us to show that 

\begin{theorem}\label{T:ahlgrenono}
If $p$ is odd, $\pi(T) \in \mathbb{F}_p[T]$ has degree 3, and $\mathfrak{p}$ is the ideal generated by $\pi(T)$, then we have
\begin{equation*}
\prod_{P \in Y_0(\mathfrak{p})} (x- j(P))^{\wt(P)} \equiv \prod_{\substack{\phi/\overline{\mathbb{F}}_{\mathfrak{p}} \\ \phi\text{ supersingular}}}(x-j(\phi))^{g_{\mathfrak{p}}(g_{\mathfrak{p}}-1)} \pmod{\mathfrak{p}},
\end{equation*}
where $g_{\mathfrak{p}}$ is the genus of the curve $X_0(\mathfrak{p})$ and $\wt(P)$ is given in Definition \ref{D:weight}.
\end{theorem}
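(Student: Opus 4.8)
The plan is to deduce the statement from Theorem~\ref{T:computation} by comparing the two sides as divisors on $X_0(\mathfrak{p})$ modulo $\mathfrak{p}$ and then passing to $j$-invariants. First I would recall the defining property of the modular Wronskian $W(z)$: its divisor on $X_0(\mathfrak{p})$ records the Weierstrass points with multiplicity $\wt(P)$, so that on the open curve $Y_0(\mathfrak{p})$ (away from the cusps) the divisor of $W$ is exactly $\sum_{P\in Y_0(\mathfrak{p})}\wt(P)\,[P]$, possibly plus a contribution supported at the cusps which I must track carefully. So the left-hand product $\prod_{P\in Y_0(\mathfrak{p})}(x-j(P))^{\wt(P)}$ is, up to the cuspidal part, the ``norm to the $j$-line'' of the divisor of $W$.

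Next I would use Theorem~\ref{T:computation}, which gives $W(z)\equiv (-1)^{(p+1)/2}\,g^{p^2(p-1)/2}\,h^{p^2(p+1)/2}\pmod{\mathfrak{p}}$. Here I need the divisors of $g$ and $h$ on $X_0(\mathfrak{p})$: $h$ is (a power of) the Drinfeld discriminant-type form, vanishing only at the cusps, while $g$ is the weight-$(q-1)$ Eisenstein-type form whose reduction modulo $\mathfrak{p}$ vanishes precisely at the supersingular points—this is the Drinfeld analogue of the classical fact that $E_{p-1}$ reduces to the Hasse invariant, and it is exactly the mechanism by which supersingular $j$-invariants enter. Thus, reducing the right-hand side of Theorem~\ref{T:computation} modulo $\mathfrak{p}$ and reading off the divisor on $Y_0(\mathfrak{p})$, the only zeros are at points reducing to supersingular Drinfeld modules, and I must compute their multiplicities. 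Matching the exponent $p^2(p-1)/2$ from $g^{p^2(p-1)/2}$ against the number of supersingular points and the ramification of the covering $X_0(\mathfrak{p})\to X(1)$ over supersingular points should produce the exponent $g_{\mathfrak{p}}(g_{\mathfrak{p}}-1)$; here I would invoke the genus formula for $X_0(\mathfrak{p})$ in the degree-$3$ case (with $q=p$, $\deg\pi=3$, so $|\mathfrak{p}|=p^3$) to rewrite $p^2(p-1)/2$ in terms of $g_{\mathfrak{p}}$, and use the Deuring–Eichler–type mass formula for supersingular Drinfeld modules to count them and their automorphisms, which govern the ramification indices.

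Then I would take the image of the divisor identity under the finite map $j\colon X_0(\mathfrak{p})\to \mathbb{P}^1$: pushing forward the divisor of $W$ gives $\prod_P(x-j(P))^{\wt(P)}$ on the left, while pushing forward the divisor of $g^{p^2(p-1)/2}\pmod{\mathfrak{p}}$ gives a power of $\prod_{\phi \text{ ss}}(x-j(\phi))$, the power being $g_{\mathfrak{p}}(g_{\mathfrak{p}}-1)$ after the bookkeeping above; the scalar $(-1)^{(p+1)/2}$ and the factor $h^{p^2(p+1)/2}$ disappear because $h$ is supported at the cusps, which are excluded from $Y_0(\mathfrak{p})$, and because both sides are monic in $x$. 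The main obstacle I expect is the exponent bookkeeping: I must account precisely for (i) the ramification of $j$ over each supersingular point—each such point of $X(1)$ pulls back under $X_0(\mathfrak{p})\to X(1)$ to one or two points of $X_0(\mathfrak{p})$ depending on whether the underlying supersingular Drinfeld module is defined over $\overline{\mathbb{F}}_{\mathfrak{p}}$ with the kernel of the canonical isogeny being or not being the unique local subgroup—and (ii) the possibility that the reduction map is not injective on Weierstrass points, so that $\wt(P)$ on the left accumulates over all $P$ reducing to a given supersingular $\phi$. Reconciling these so that the total left-hand multiplicity over each supersingular $j$-invariant equals $g_{\mathfrak{p}}(g_{\mathfrak{p}}-1)$ is the crux; the degree-$3$ hypothesis is what makes the genus small enough for the arithmetic to be carried out by hand.
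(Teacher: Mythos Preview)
Your proposal rests on a mistaken identification in the second paragraph: you claim that $g$, the weight-$(q-1)$ Eisenstein form, reduces modulo $\mathfrak{p}$ to a form vanishing precisely at the supersingular points. This is false. Here $q = p$ and $\mathfrak{p}$ has degree $d = 3$, so the residue field has order $p^3$; the Drinfeld analogue of the Hasse invariant is the reduction of $g_d = g_3$ of weight $p^3 - 1$, not of $g = g_1$ of weight $p - 1$. The form $g$ has a simple zero only at the elliptic point $j = 0$ (its companion polynomial is the constant $1$), and while $j = 0$ happens to be one supersingular $j$-invariant when $d$ is odd, there are $g_{\mathfrak{p}} + 1 = p + 1$ of them in total. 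Reading the divisor of $g^{p^2(p-1)/2} h^{p^2(p+1)/2}$ as you propose therefore yields only a power of $x$, not $S_{\mathfrak{p}}(x)^{p(p-1)}$.

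The supersingular factors arise through a mechanism you do not invoke. One first passes from $W$ (a $\Gamma_0(\mathfrak{p})$-form) to the norm $\mathcal{W} = \widetilde{\N(W)}$, a $\Gl_2(A)$-form whose companion polynomial equals $F_{\mathfrak{p}}(x)$ on the nose (Theorem~\ref{T:epsilon}, with $\epsilon(3) = 0$); this identity holds in characteristic zero and is what converts the left-hand product into a companion polynomial before any reduction. Theorems~\ref{normtheorem} and~\ref{T:computation} then give $\mathcal{W} \equiv F^2 \pmod{\mathfrak{p}}$ with $F$ the right-hand side of Theorem~\ref{T:computation}, and the weight of $\mathcal{W}$ exceeds that of $F^2$ by $g_{\mathfrak{p}}(g_{\mathfrak{p}}-1)(q^d - 1) = p(p-1)(p^3-1)$. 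Bridging this gap by $g_3^{\,p(p-1)}$ and applying Proposition~\ref{P:companionpolys} repeatedly is what produces the factor $P(g_3, x)^{p(p-1)}$; since $x\, P(g_3, x) \equiv S_{\mathfrak{p}}(x)$, \emph{this} is the source of all supersingular roots other than $j=0$. The direct contribution of $F^2$ is only $P(F^2, x) = x^{(p-1)^2}$, which together with the $p-1$ extra factors of $x$ picked up from the odd case of Proposition~\ref{P:companionpolys} supplies exactly the power $x^{p(p-1)}$ needed for the $j=0$ factor of $S_{\mathfrak{p}}(x)^{p(p-1)}$. In short, the supersingular locus enters through the filtration drop of $\mathcal{W}$ modulo $\mathfrak{p}$, not through the zeros of $g$.
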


This is an analogue of the formula from \cite{ahlgrenono} quoted earlier.

The structure of the paper is the following: We begin by reviewing the theory of Weierstrass points in positive characteristic in Section \ref{theory}. Then we introduce the basic objects from the Drinfeld setting that we will need in Section \ref{drinfeldthry}. Section \ref{drinfeldthry} contains as well all of the statements of the results from the theory of Drinfeld modular forms that we will cite. In Section \ref{quasimodular}, we introduce Drinfeld quasimodular forms and some differentials operators that are needed in the definition of the Drinfeld modular form $W(z)$. We also prove some elementary results concerning the action of these operators on Drinfeld modular forms. The definition of $W(z)$ is finally given in Section \ref{weierstrass}, along with the properties of this form. Then the meat of the proof of Theorem \ref{T:main} is in Section \ref{consequences} where we apply the machinery developed in the previous sections to the form $W(z)$. Finally, in Section \ref{divisorcusp} we briefly consider the order of vanishing of $W(z)$ at $\infty$ and establish a result needed to study the special case which yields Theorems \ref{T:computation} and \ref{T:ahlgrenono}. The proofs of these last two theorems are then given in Section \ref{specialcase}.

\section*{Acknowledgments}
A lot of this work is part of the author's PhD thesis, and the author is grateful to her adviser Ken Ono, who suggested the problem. The author also thanks Matt Baker for many helpful reading suggestions, and C\'{e}cile Armana for showing her how the computation of the canonical gap sequence of $X_0(\mathfrak{p})$ follows from one of her results. Finally, the author thanks the referee for their thoughtful comments and suggestions.

\section{Weierstrass points in characteristic $p$}\label{theory}

Since the theory of Weierstrass points in positive characteristic $p$ is much less well known than the theory in characteristic $0$, we begin with a short review of the facts we will need, based on the treatment in \cite{stohrvoloch} and \cite{goldschmidt}. In particular, proofs of all facts that are stated here without proof can be found in \cite{goldschmidt}.

For the duration of this section only, let $k$ be an algebraically closed field and $X$ a smooth projective irreducible curve over $k$ of genus $g\geq 2$ with function field $k(X)$. A natural question to ask about $X$ is the following: For $P$ a point of $X$ and $n$ a positive integer, does there exist a nonzero rational function $F$ on $X$ such that $F$ has a pole of order exactly $n$ at $P$ and $F$ is regular elsewhere? If the answer to this question is negative, we say that $n$ is a \emph{gap} at $P$; otherwise $n$ is a \emph{pole number} at $P$. It is a fact that for a point $P$ on $X$ there are exactly $g$ gaps at $P$, and if $n_1(P), \ldots, n_g(P)$ are the gaps at $P$, indexed such that  $n_i (P)<n_j(P)$ if $i<j$, we say that $(n_1(P), \ldots, n_g(P))$ is the gap sequence at $P$. 

For a fixed curve $X$, it can be shown that there exists a sequence of positive integers $(n_1, \ldots, n_g)$ with $n_i <n_j$ if $i<j$ such that $(n_1, \ldots, n_g)$ is the gap sequence for all but finitely many points of $X$. We call this sequence the \emph{canonical gap sequence} of $X$. There exist on $X$ finitely many points that have a different gap sequence, and they are called the \emph{Weierstrass points} of $X$. If $(n_1, \ldots, n_g)$ is the canonical gap sequence of $X$ and $(n_1(P), \ldots, n_g(P))$ is the gap sequence at $P$ for any point $P$ of $X$, then $n_i \leq n_i(P)$ for each $i$.

\begin{remark}\label{genusconstraint}
We exclude the case of $g=0$ since in that case for any point $P$ there is a nonzero rational function $F$ on $X$ such that $F$ has a single pole at $P$ and is regular elsewhere.  There are therefore no Weierstrass points. We also exclude the case of $g=1$ since in that case there are no points $P$ with a pole number of $1$ (the existence of such a point would force $g=0$, a contradiction) and for each $P$ on $X$ there is a nonzero rational function $F$ on $X$ such that $F$ has a double pole at $P$ and is regular elsewhere.  There are therefore again no Weierstrass points. 
\end{remark}

For any point $P$ on $X$, a measure of how its gap sequence differs from the canonical gap sequence is given by the quantity
\begin{equation*}
\sum_{i=1}^{g} (n_i(P)-n_i),
\end{equation*}
which is positive if and only if $P$ is a Weierstrass point.

If $X$ is defined over a field of characteristic $0$, then the canonical gap sequence is always $(1, \ldots, g)$. When $k$ is of characteristic $p>0$ and $X$ has canonical gap sequence $(1, \ldots, g)$, we say that $X$ has a \emph{classical} canonical gap sequence, or a classical canonical linear system (this designation will be justified shortly when we define the canonical orders of $X$).

\begin{example}
Let $X$ be a hyperelliptic curve of genus $g \geq 2$, then its canonical gap sequence is $(1, \ldots, g)$. (In characteristic $p>0$ this is a theorem that was implicit in \cite{hasseschmid} and stated explicitly in the seminal work of Schmidt \cite{schmidt} defining Weierstrass points in positive characteristic.) Furthermore, the Weierstrass points of $X$ are exactly the branch points of $f$, where $f \colon X \to \mathbb{P}^1$ is any separable degree $2$ morphism. At such a branch point $P$ the rational function $F=\frac{1}{f-f(P)}$ has a double pole at $P$ and is regular elsewhere, and so at the Weierstrass points the gap sequence is $(1,3, \ldots, 2g-1)$.
\end{example}

\begin{example}
The projective curve of genus $3$ given by $X_0^4+X_1^4+X_2^4=0$ over $\overline{\mathbb{F}}_3$ does not have a classical gap sequence. On this curve, for each point $P$ one can construct a nonzero rational function having a pole of order $\leq 3$ at $P$ and regular elsewhere. 
\end{example}

Because of the difficulty of computing the gap sequence of a point directly, it is often more convenient to consider a related sequence of strictly increasing positive integers $(j_1, \ldots, j_{g-1})$ called the \emph{canonical orders} of $X$, which we now describe. For any element $x \in k(X)$, we will write $[x]$ for the divisor of $x$, $\sum_{P} v_{P}(x)P$, where the sum is taken over all points $P$ of $X$. As usual, for any divisor $D$ on $X$, we may define the linear system
\begin{equation*}
L(D)=\{ x \in k(X)^{\times} : [x] \geq -D\} \cup \{0\}.
\end{equation*}

We further denote by $\Omega_X$ the space of (algebraic) meromorphic differential forms on $X$. Because $X$ is defined over an algebraically closed field, we have a canonical isomorphism between $\Omega_X$ and the space of Weil differentials $W_X$ (in fact, to obtain this isomorphism it would suffice here to require that $k' \otimes k(X)$ be a field for all finite extensions $k'$ of $k$). This allows us to define the divisor $[\omega]$ of $\omega$ a meromorphic differential on $X$. We do this in the following manner: Let $\mathbb{A}_{k(X)}$ denote the ring of ad\`{e}les of $k(X)$ and for $D$ a divisor on $X$, write
\begin{equation*}
\mathbb{A}_{k(X)}(D) \defi \{ \alpha = (\alpha_P) \in \mathbb{A}_{k(X)} \mid v_{P}(\alpha_P) \geq - v_{P}(D) \text{ for all points $P$ of $X$} \}.
\end{equation*}
Then a Weil differential on $X$ is a $k$-linear functional with domain $\mathbb{A}_{k(X)}$ that vanishes on $\mathbb{A}_{k(X)}(D) + k(X)$ for some divisor $D$. For each Weil differential $\omega^*$, there is a unique divisor $D$ of maximum degree such that  $\omega^*$ vanishes on $\mathbb{A}_{k(X)}(D)+k(X)$, and we define $[\omega^*] \defi D$. Then if $\omega$ corresponds to $\omega^*$ under the canonical isomorphism between Weil differentials and meromorphic differentials, we simply write $[\omega]=[\omega^*]$ and $v_P(\omega)=v_P([\omega])$. One pleasant consequence of this definition is that for $x\in k(X)$ and $\omega \in \Omega_X$ we have $[x\omega]=[x]+[\omega]$. If $\omega$ is a meromorphic differential on $X$, its divisor $C$ is called a \emph{canonical divisor} on $X$, and since any two meromorphic differentials differ by a function, any two canonical divisors are linearly equivalent.

For a point $P$ of $X$, consider the following sequence of spaces:
\begin{equation*}
k= L(0) \subseteq L(P) \subseteq L(2P) \subseteq L(3P) \subseteq \ldots
\end{equation*}
Then we have that $n$ is a gap at $P$ if and only if $L((n-1)P)=L(nP)$. By the Riemann-Roch theorem, we have that for any positive integer $n$ and any point $P$, 
\begin{equation*}
\dim L(nP)=n-g+1+\dim L(C-nP),
\end{equation*}
from which it follows that 
\begin{equation*}
\dim L((n+1)P)/L(nP) = 1 - \dim L(C-nP)/L(C-(n+1)P).
\end{equation*}
Writing $L_C(nP)=L(C-nP)$, this last equation justifies our interest in the \emph{(canonical) osculating filtration} at $P$:
\begin{equation*}
L(C)=L_C(0) \supseteq L_C(P) \supseteq L_C(2P) \supseteq L_C(3P) \supseteq \ldots
\end{equation*}
Indeed, for a positive integer $n$, $n+1$ is a gap at $P$ if and only if $L_C(nP) \supsetneq L_C((n+1)P)$. In turn, this implies the existence of a nonzero $F \in L(C)$ such that $v_P(F)=n-v_P(C)$. Whenever such a function exists, we say that $n$ is a \emph{canonical order} at $P$. The definition of the canonical orders at $P$ does not depend on the choice of canonical divisor $C$: if $n$ is a canonical order at $P$ and $C'$ is any canonical divisor, there will exist a nonzero $F' \in L(C')$ such that $v_P(F')=n-v_P(C')$.

From the discussion above it follows that for a positive integer $n$, $n$ is a canonical order at $P$ if and only if $n+1$ is a gap at $P$. (We note that since $X$ is a curve over an algebraically closed field, the existence of a point $P$ such that $1$ is a pole number at $P$ implies that $X$ has genus zero. Therefore in our case $1$ will always be a gap for any point $P$ on $X$ since we restrict our attention to curves of genus greater than or equal to $2$, but we do not say that $0$ is a canonical order.) As was the case for gap sequences, all but finitely many points of $X$ have the same canonical orders, and we call the strictly increasing sequence of positive integers $(j_1, \ldots, j_{g-1})$ formed by these integers the \emph{canonical orders} of $X$.

If $(j_1, \ldots, j_{g-1})$ are the canonical orders of $X$ and $(j_1(P), \ldots, j_{g-1}(P))$ are the canonical orders at $P$ for any point $P$ of $X$, then again $j_i \leq j_i(P)$ for each $i$. Furthermore if as before $(n_1, \ldots, n_g)$ is the canonical gap sequence of $X$ and $(n_1(P), \ldots, n_g(P))$ is the gap sequence at $P$ then
\begin{equation*}
\sum_{i=1}^{g} (n_i(P)-n_i)=\sum_{i=1}^{g-1} (j_i(P)-j_i).
\end{equation*}

The point $P$ is called an \emph{osculation point} of $X$ if $j_{g-1}(P) > g-1$. In particular an osculation point has at least one pole number that is less than or equal to $g$. If $X$ has a classical gap sequence, then the osculation points and the Weierstrass points of $X$ exactly coincide. Otherwise, every point of $X$ is an osculation point. 

An important tool in the study of Weierstrass points is a divisor $w$ on $X$, whose construction is due to St\"{o}hr and Voloch \cite{stohrvoloch}. This divisor has the property that 
\begin{equation*}
v_P(w) \geq \sum_{i=1}^{g} (n_i(P)-n_i)
\end{equation*}
for any point $P$ of $X$, with equality 
\begin{equation*}
v_P(w)=\sum_{i=1}^{g} (n_i(P)-n_i)=0
\end{equation*}
if $P$ is not a Weierstrass point of $X$. We describe its construction now.

A \emph{separating variable} for $k(X)$ is an element $s \in k(X)$ transcendental over $k$ such that $k(X)$ is a finite, separable extension of $k(s)$. With the assumptions on $X$ enforced in this section, we have that $s$ is a separating variable if and only if the differential $ds$ is not identically $0$. Furthermore, $s$ is a separating variable if $s$ is a local parameter at a separable point of $X$. Since in our case $X$ is defined over an algebraically closed field $k$, every point is separable.

On the polynomial ring $k[s]$, we may define the \emph{$n^{th}$ Hasse derivative with respect to $s$} by putting
\begin{equation*}
\mathfrak{D}_s^{(n)}(s^m)= 
\begin{cases}
\binom{m}{n} s^{m-n} & \text{if $m \geq n$,}\\
0 & \text{otherwise,}
\end{cases}
\end{equation*}
and extending linearly to $k[s]$. It can be shown that if $s$ is a separating variable for $k(X)$ over $k$, then this family of maps can be uniquely extended to a family of maps $\mathfrak{D}_s^{(n)}: k(X) \to k(X)$.

Again let $C$ be a canonical divisor on the curve $X$, and consider the linear system $L(C)$ associated to it. It is a basic fact that $L(C)$ is a $k$-vector subspace of $k(X)$ of dimension $g$, and that replacing $C$ by a different canonical divisor yields an isomorphic subspace. Fix any basis $\phi=\{\phi_1, \ldots \phi_g\}$ of $L(C)$, and define the matrix
\begin{equation*}
 H= H(\phi,s)=\left(\mathfrak{D}_s^{(j)}(\phi_i)\right)
 \end{equation*}
 for $1 \leq i \leq g$ and $0 \leq j$. Write further $H^{(j)}$ for the column of $H$ whose $i^{th}$ entry is $\mathfrak{D}_s^{(j)}(\phi_i)$.
 
We are interested in the indices $j$ such that $H^{(j)}$ is not a $k(X)$-linear combination of lower-numbered columns. This is true for $j=0$ since the $\phi_i$'s are not all zero. One can show that there are $g-1$ more such indices, which we will denote by $j_1, \ldots, j_{g-1}$, and we will write $J(\phi, s)=(j_1, \ldots , j_{g-1})$.This sequence has the property that $J(\phi, s)$ in fact does not depend on our choice of $s$ a separating variable, $C$ a canonical divisor, or $\phi$ a basis for the linear system associated to $C$, and in fact that the $j_i$'s are exactly the canonical orders of $X$ defined earlier.
 
For any sequence $J=(j_1, j_2, \ldots)$ of positive integers, let $H^J$ be the submatrix of $H$ whose first column is $H^{(0)}$ and whose $(l+1)^{st}$ column is $H^{(j_l)}$. Then we may define the nonzero rational function
 \begin{equation*}
 W(\phi, s) = \det H^{J(\phi, s)},
 \end{equation*} 
the \emph{Wronskian of $\phi$ with respect to $s$}. While not independent of the choices made above, this function behaves as well as well as can be expected. More precisely, put $\phi'_i=\sum_j a_{ij}\phi_j$ for $a_{ij} \in k$ such that $\phi'=(\phi_1', \ldots, \phi_g')$ is a different basis for $L(C)$, and let $y \in k(X)^{\times}$ and $t$ be another separating variable. Then
\begin{equation}\label{wronsk}
W(y\phi', t)=\det(a_{ij})y^g\left(ds/dt\right)^{j_1+ \ldots +j_{g-1}}W(\phi, s).
\end{equation}
In light of this equation, we define the following divisor:
\begin{equation*}
w(\phi, s) = [W(\phi, s)]+gC+(j_1+\ldots+j_{g-1})[ds],
\end{equation*}
which by equation (\ref{wronsk}) is in fact independent of any choice we made, so that we may denote it simply by $w$. One can show that the points in the support of $w$ are exactly the Weierstrass points of $X$, and that $v_P(w) \geq \sum_{i=1}^{g} (n_i(P)-n_i)$ for any point $P$ of $X$, with equality $v_P(w)=\sum_{i=1}^{g} (n_i(P)-n_i)=0$ if $P$ is not a Weierstrass point of $X$, as claimed above.

The divisor $w$ is effective: Fixing a point $P$ of $X$, one may choose a canonical divisor $C$ such that $v_P(C)= 0$, which ensures that $v_P(\phi_i)\geq 0$, so that $v_p([W(\phi, s)]) \geq 0$ since taking Hasse derivatives does not lower the valuation. Furthermore, one can choose $s$ to be a local parameter at $P$, so that $v_P([ds])=0$. With these choices and because of the invariance of $w$, it follows that $v_P(w)\geq 0$ for each $P$.

In \cite{stohrvoloch}, the authors define the Weierstrass weight of a point to be $v_P(w)$. In Section \ref{weierstrass} we will define a Drinfeld modular form $W(z)$ that will play for us a role analogous to the function $W(\phi,s)$. Because of this analogy, we will use the divisor of $W(z)$ to define the modular Weierstrass weight of a point $P$ on the Drinfeld modular curve $X_0(\mathfrak{p})$, and study this integer in this work.

\begin{remark}
If $X$ is defined over a field of characteristic $0$, we have the equality $v_P(w) = \sum_{i=1}^{g} (n_i(P)-n_i)$ for all points $P$ of $X$. In positive characteristic, this equality holds if and only if $\det \binom{J'}{J} \neq 0$, where $J'=(j_1(P), \ldots, j_{g-1}(P))$ is the sequence of canonical orders at $P$, $J=(j_1, \ldots, j_{g-1})$ is the sequence of canonical orders of $X$, and $\binom{J'}{J}$ is the $(g-1) \times (g-1)$ matrix of binomial coefficients $\binom{j'_r}{j_s}$, where $\binom{j'_r}{j_s}=0$ if $j'_r<j_s$ and each binomial coefficient is reduced modulo $p$, the characteristic of $k$.
\end{remark}

We will also need the following well-known fact: We have that $\dim_{k(X)} \Omega_X=1$, so that $\Omega_X=k(X)\cdot \omega$ for any non-zero $\omega \in \Omega_X$. If $C$ is a canonical divisor of $X$, by definition it is the divisor of some Weil differential $\omega^*$ and thus of a meromorphic differential $\omega$. Then the map
\begin{align*}
\Omega_X &\to k(X)\\
x\omega &\mapsto x
\end{align*}
is an isomorphism of $k$-vector spaces, and under this isomorphism the space $L(C) \subset k(X)$ corresponds to the space $\Omega_{X, \text{reg}}$ of algebraic differentials without poles.

\section{The Drinfeld setting}\label{drinfeldthry}

Throughout when we refer to rigid analytic objects we will mean rigid analytic in the sense of Fresnel and van der Put \cite{fresnel}.

\subsection{Drinfeld modules and Drinfeld modular forms}
For a reference on Drinfeld modules and Drinfeld modular forms, we refer the reader to Gekeler's excellent \emph{Inventiones} paper \cite{gekeler}, or to the author's PhD thesis \cite{vincentthesis}. 

In this paper we will only consider the case of the affine ring $A=\mathbb{F}_q[T]$, with fraction field $K=\mathbb{F}_q(T)$. We complete $K$ at the infinite place $v_{\infty}(x)=-\operatorname{deg}(x)$, and write $K_{\infty}=\mathbb{F}_q(\!(1/T)\!)$ for the completion of $K$ at this place. We will also write
\begin{equation*}
C=\hat{\bar{K}}_{\infty}
\end{equation*}
for the completed algebraic closure of $K_{\infty}$, and $\Omega=\mathbb{P}^1(C)-\mathbb{P}^1(K_{\infty})=C - K_{\infty}$. (From now on $C$ will never be a canonical divisor again.) $\Omega$ has a rigid analytic structure described in \cite{gekelerjacobian}, and we call it the Drinfeld upper half-plane. The group $\Gl_2(A)$ acts on $\Omega$ by fractional linear transformations.

\begin{definition}\label{modform}
Let $\Gamma$ be a congruence subgroup of $\Gl_2(A)$. A function $f \colon \Omega \rightarrow C$ is called a \emph{Drinfeld modular form of weight $k$ and type $l$ for $\Gamma$}, where $k \geq 0$ is an integer and $l$ is a class in $\mathbb{Z} / (\# \det \Gamma)$, if
\begin{enumerate}
\item for $\gamma = 
\left(\begin{smallmatrix} a&b\\ c&d \end{smallmatrix}\right)
\in \Gamma$, $f(\gamma z)=(\det \gamma)^{-l}(cz+d)^kf(z)$;
\item $f$ is rigid analytic on $\Omega$;
\item \label{cusps} $f$ is analytic at the cusps of $\Gamma$: at each cusp $f$ can be written as a power series with a positive radius of convergence in a (root of) a local parameter at this cusp (this will be discussed further shortly).
\end{enumerate}
\end{definition}

For a congruence subgroup $\Gamma$ of $\Gl_2(A)$, we will denote the (finite dimensional) vector space of Drinfeld modular forms of weight $k$ and type $l$ for this subgroup by $M_{k,l}(\Gamma)$, the subspace of cusp forms (the forms having at least a single zero at each cusp of $\Gamma$) by $M^{1}_{k,l}(\Gamma)$, and the subspace of double cusp forms (the forms having at least a double zero at each cusp of $\Gamma$) by $M^{2}_{k,l}(\Gamma)$. We will define precisely what we mean by the order of vanishing of a Drinfeld modular form at a cusp at the very end of this section.

Although they are important to this work, we will avoid discussing Drinfeld modules as much as possible, referring rather the reader to \cite{gekeler} for background reading. We limit ourselves to defining the Carlitz module, and presenting only the barest facts about Drinfeld modules of rank $2$ that are necessary to read the text. 

\begin{definition}
Let $L$ be either a field extension of $K$ or, if $\mathfrak{p}$ is a prime ideal of $A$, an extension of the field $\mathbb{F}_{\mathfrak{p}} = A/\mathfrak{p}$. Further write $\tau(X)=X^q$ and let $L\{\tau\} \subset \operatorname{End}_L(\mathbb{G}_a)$ be the subalgebra generated by $\tau$ over $L$, with commutation relation $l \tau = \tau l^q$ for $l \in L$. A Drinfeld module of rank $r$ over $L$ is a ring homomorphism $\phi \colon A \to L \{ \tau \}$ such that for $a \in A$ of degree $d$,
\begin{equation*}
\phi(a) = \sum_{0\leq i\leq rd}l_i\tau^i
\end{equation*}
with $l_0=a$ and $l_{rd}\neq 0$. The numbers $l_i$ are called the \emph{coefficients} of $\phi$.
\end{definition}

We say that two Drinfeld modules $\phi$ and $\psi$ are \emph{isogenous} if there exists a nonzero element $u \in \operatorname{End}_L(\mathbb{G}_a)$ such that $u \circ \phi = \psi \circ u$. If $u\in L^{\times}$, then we say that the two modules are \emph{isomorphic} over $L$. One may show that there is a one-to-one correspondence between Drinfeld modules of rank $r$ over $C$ and rank $r$ $A$-lattices in $C$.

In the 1930s Carlitz studied some polynomials which had properties similar to those exhibited by the classical cyclotomic polynomials \cite{carlitz4}. Reinterpreting his work in the context laid out by Drinfeld, his polynomials are now understood to give the action on $C$ of a certain Drinfeld module of rank $1$. We call this module the Carlitz module and it is defined by:
\begin{equation}\label{carlitz}
\rho(T)=T\tau^0+\tau.
\end{equation}
Under the correspondence mentioned above, this Drinfeld module corresponds to a certain rank 1 $A$-lattice $L=\tilde{\pi}A$, where the \emph{Carlitz period} $\tilde{\pi}\in K_{\infty}(\sqrt[q-1]{-T})$ is defined up to multiplication by an element of $\mathbb{F}_q^{\times}$. We choose one such $\tilde{\pi}$ and fix it for the remainder of this work. As usual we have the Carlitz exponential function
\begin{equation*}\label{exponential}
e_{A}(z) \defi z\prod_{\substack{a \in A\\ a \neq 0}}\left(1-\frac{z}{a}\right).
\end{equation*}
Then we write
\begin{equation}\label{u}
u(z) \defi \tilde{\pi} \frac{1}{e_A(z)}
\end{equation} 
for the parameter at infinity. This differs from Gekeler's original notation, who used $t(z)$ for this function, but agrees with the notation used in more recent articles, for example by Bosser and Pellarin in \cite{bosserp2}.

We will also consider Drinfeld modules of rank $2$. For $a \in A$, $\phi$ a Drinfeld module over $L$, and $L'$ a field extension of $L$, write
\begin{equation*}
\phi[a](L')= \{ x \in L' : \phi(a)(x)=0 \}
\end{equation*}
for the $a$-torsion of $\phi$. When $\phi$ is of rank $2$ and defined over $C$, for $\pi(T)$ a prime polynomial generating the ideal $\mathfrak{p}$ of $A$, we have
\begin{equation*}
\phi[\pi](C) \cong A/\mathfrak{p} \times A/\mathfrak{p}.
\end{equation*}

Again, if $\phi$ is of rank $2$, but is now defined over the algebraic closure $\overline{\mathbb{F}}_{\mathfrak{p}}$ of $\mathbb{F}_{\mathfrak{p}} = A/\mathfrak{p}$, we have
\begin{equation*}
\phi[\pi](\overline{\mathbb{F}}_{\mathfrak{p}})=
\begin{cases}
0 \qquad & \text{in which case we say $\phi$ is \emph{supersingular}, or}\\
A/\mathfrak{p} \qquad &\text{in which case we say $\phi$ is \emph{ordinary}}.
\end{cases}
\end{equation*}
There are $g_{\mathfrak{p}}+1$ supersingular Drinfeld modules defined over the algebraic closure of $A/\mathfrak{p}$, where
\begin{equation}\label{genusformula}
g_{\mathfrak{p}} \defi
\begin{cases}
\frac{q^{d}-q}{q^2-1} & \qquad \mbox{if $d$ is odd,}\\
\frac{q^{d}-q^2}{q^2-1} & \qquad \mbox{if $d$ is even.}
\end{cases}
\end{equation}

\begin{remark}
We use $g_{\mathfrak{p}}$ to denote the quantity above because it is the genus of the modular curve $X_0(\mathfrak{p})$.
\end{remark}

As before, let $\mathfrak{p}$ be a prime ideal of $A$. For a Drinfeld module $\phi$ of rank $2$ over $K$, there is a notion of good reduction at $\mathfrak{p}$: First one must find a Drinfeld module $\psi$ isomorphic to $\phi$ over $K$, such that $\psi$ has coefficients in $A$ and such that the reduction of $\psi$ modulo $\mathfrak{p}$ (obtained by reducing the coefficients modulo $\mathfrak{p}$) is a Drinfeld module. If this is possible and in addition the reduction of $\psi$ modulo $\mathfrak{p}$ has rank $2$ as a Drinfeld module, then we say that $\phi$ has \emph{good reduction at $\mathfrak{p}$}. Furthermore, if the reduction of $\psi$ modulo $\mathfrak{p}$ is supersingular, then we say that $\phi$ is \emph{supersingular at $\mathfrak{p}$}.

We now present some facts on Drinfeld modular forms for the full modular group $\Gl_2(A)$. As in the classical case, the algebraic curve $Y_{\Gl_2(A)}$ whose associated rigid analytic space is $\Gl_2(A)\backslash \Omega$ can be compactified by adding a single cusp which we denote by $\infty$. This will be discussed more rigorously in the next section.

As in \cite{gekeler}, we will write $g_k$ for the normalized Eisenstein series of weight $q^k-1$ and type $0$ for $\Gl_2(A)$ and set $g=g_1$ for simplicity. (From now on we will never use $g$ to denote the genus of a curve again.) We will also write $h$ for the Poincar\'{e} series of weight $q+1$ and type $1$ for $\Gl_2(A)$ which was first defined in \cite{gerritzen}. It is well-known that the graded $C$-algebra of Drinfeld modular forms of all weights and all types for $\Gl_2(A)$ is the polynomial ring $C[g,h]$ (where each Drinfeld modular form corresponds to a unique isobaric polynomial), that $g$ has leading term $1$, that $h$ has a single zero at $\infty$ and leading coefficient $-1$ and that both $g$ and $h$ have  $u$-series expansions with integral coefficients.

We record here a computation which we will need later, and which follows from knowing that the algebra of Drinfeld modular forms is generated by $g$ and $h$:

\begin{proposition}\label{dimensiongl}
For $q\geq 3$, the dimension of the space of modular forms of weight $q^d+1$ and type $1$ for $\Gl_2(A)$ is equal to $g_{\mathfrak{p}}+1$, and the dimension of its subspace of double cusp forms is $g_{\mathfrak{p}}$, where $g_{\mathfrak{p}}$ is as in equation (\ref{genusformula}).
\end{proposition}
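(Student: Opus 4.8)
The plan is to read off both dimensions directly from the ring structure recorded just above. Since the graded algebra of Drinfeld modular forms for $\Gl_2(A)$ is the polynomial ring $C[g,h]$ with $g$ of weight $q-1$ and type $0$ and $h$ of weight $q+1$ and type $1$, a monomial $g^ah^b$ has weight $a(q-1)+b(q+1)$ and type $b\bmod(q-1)$; as $g$ and $h$ are algebraically independent, the monomials of a fixed weight and type form a $C$-basis of the corresponding space. Hence $\dim M_{k,l}(\Gl_2(A))$ equals the number of pairs $(a,b)$ of nonnegative integers with $a(q-1)+b(q+1)=k$ and $b\equiv l\pmod{q-1}$, and the whole statement becomes a lattice-point count.

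First I would specialize to $k=q^d+1$ and $l=1$, where $d=\degree\pi(T)$. Writing the type condition as $b=1+(q-1)m$ with $m\ge 0$ and substituting into $a(q-1)+b(q+1)=q^d+1$, one obtains $a(q-1)+(q-1)(q+1)m=q^d-q$; dividing by $q-1$ and using $\frac{q^d-q}{q-1}=q+q^2+\cdots+q^{d-1}=:N$, this becomes
\begin{equation*}
a+(q+1)m=N,\qquad a,m\ge 0.
\end{equation*}
The solutions are parametrized by $0\le m\le\lfloor N/(q+1)\rfloor$, so $\dim M_{q^d+1,1}(\Gl_2(A))=\lfloor N/(q+1)\rfloor+1$, and it remains to show $\lfloor N/(q+1)\rfloor=g_{\mathfrak p}$.

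Next I would compute $N$ modulo $q+1$: since $q\equiv-1\pmod{q+1}$ we get $N\equiv\sum_{j=1}^{d-1}(-1)^j\pmod{q+1}$, which is $0$ when $d$ is odd and $-1\equiv q$ when $d$ is even. When $d$ is odd, $q+1\mid N$ and $\lfloor N/(q+1)\rfloor=N/(q+1)=q(q^{d-1}-1)/(q^2-1)=(q^d-q)/(q^2-1)$; when $d$ is even, $\lfloor N/(q+1)\rfloor=(N-q)/(q+1)=q^2(q^{d-2}-1)/(q^2-1)=(q^d-q^2)/(q^2-1)$. Comparing with (\ref{genusformula}) gives $\lfloor N/(q+1)\rfloor=g_{\mathfrak p}$ in both cases, and therefore $\dim M_{q^d+1,1}(\Gl_2(A))=g_{\mathfrak p}+1$.

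Finally, for the space of double cusp forms I would use that $\infty$ is the only cusp of $\Gl_2(A)$, that $g$ does not vanish at $\infty$ while $h$ has a simple zero there, so that $\order_\infty(g^ah^b)=b$; since distinct basis monomials of fixed weight and type have distinct values of $b$, their leading $u$-terms cannot cancel, and $M^2_{q^d+1,1}(\Gl_2(A))$ is spanned by the basis monomials with $b\ge 2$, i.e. (as $b=1+(q-1)m$ and $q\ge 3$) with $m\ge 1$. These are parametrized by $1\le m\le\lfloor N/(q+1)\rfloor$, so $\dim M^2_{q^d+1,1}(\Gl_2(A))=\lfloor N/(q+1)\rfloor=g_{\mathfrak p}$. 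The only real work is the elementary case split on the parity of $d$ and matching the resulting floor with the two branches of (\ref{genusformula}); there is no conceptual obstacle beyond that.
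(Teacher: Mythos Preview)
Your argument is correct and is exactly the computation the paper alludes to: the paper does not give a proof but simply states that the result ``follows from knowing that the algebra of Drinfeld modular forms is generated by $g$ and $h$,'' and you have carried out precisely that monomial count and matched $\lfloor N/(q+1)\rfloor$ with the two cases of (\ref{genusformula}). The handling of the double-cusp subspace via $\order_\infty(g^ah^b)=b$ and the distinctness of the $b$'s is also right, and the hypothesis $q\ge 3$ enters exactly where you use it, to ensure $b=1+(q-1)m\ge 2$ forces $m\ge 1$.
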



We will also need a slash operator, which we define now. For any $x \in K_{\infty}^{\times}$, $x$ can be written uniquely as 
\begin{equation}\label{leadingcoeff}
x=\zeta_x \left(\frac{1}{T}\right)^{v_{\infty}(x)}u_x
\end{equation}
where $\zeta_x \in \mathbb{F}_q^{\times}$, and $u_x$ is such that $v_{\infty}(u_x-1)>0$, or in other words $u_x$ is a $1$-unit at $\infty$. We call $\zeta_x$ the \emph{leading coefficient} of $x$.

For $\gamma \in \Gl_2(K)$ we have that $\det \gamma \in K^{\times}$. By (\ref{leadingcoeff}), we can write
\begin{equation*}
\det \gamma= \zeta_{\det \gamma} \left(\frac{1}{T}\right)^{v_{\infty}(\det \gamma)}u_{\det \gamma}.
\end{equation*}
For simplicity we write 
\begin{equation*}
\zeta_{\det \gamma}=\zeta_{\gamma}.
\end{equation*}

We define a \emph{slash operator} for $\gamma = 
\left(\begin{smallmatrix} a&b\\ c&d \end{smallmatrix}\right)
\in \Gl_2(K)$ on a modular form of weight $k$ and type $l$ by
\begin{equation}\label{slash}
f|_{k,l}[\gamma]=\zeta_{\gamma}^l \left(\frac{\det \gamma}{\zeta_{\gamma}}\right)^{k/2}(cz+d)^{-k}f(\gamma z).
\end{equation}
Note that for $\gamma \in \Gl_2(A)$ we have that $\det \gamma =\zeta_{\gamma}$; thus if $f$ is modular of weight $k$ and type $l$ for $\Gamma$ and $\gamma \in \Gamma$, then $f|_{k,l}[\gamma]= f$. 

\subsection{Drinfeld modular forms modulo $\mathfrak{p}$}

An important tool we will use to study the Weierstrass points of the curve $X_0(\mathfrak{p})$ is the theory of Drinfeld modular forms for $\Gl_2(A)$ upon reduction modulo $\mathfrak{p}$. Everywhere in this paper we will write $\pi(T) \in A$ for a monic prime polynomial of degree $d$ and denote by $\mathfrak{p}$ the principal ideal that it generates. For $x \in K$, we write $v_{\mathfrak{p}}(x)$ for the valuation of $x$ at $\mathfrak{p}$. 

\begin{definition}
Let $f=\sum_{i=0}^{\infty} c_i u^i$ be a formal series with $c_i \in K$. Then we define the \emph{valuation of $f$ at $\mathfrak{p}$} to be
\begin{equation*}
v_{\mathfrak{p}}(f)=\inf_{i} v_{\mathfrak{p}}(c_i).
\end{equation*}
For two formal series $f=\sum a_iu^i$ and $g=\sum b_i u^i$, we write $f\equiv g \pmod{\mathfrak{p}^m}$ if $v_{\mathfrak{p}}(f-g) \geq m$.
\end{definition}

For any $u$-series $f$ with rational $\mathfrak{p}$-integral coefficients, define its \emph{filtration modulo $\mathfrak{p}$}, denoted $w_{\mathfrak{p}}(f)$, to be the smallest integer $k$ such that there exists a modular form $f'$ of weight $k$ for $\Gl_2(A)$ such that $f \equiv f' \pmod{\mathfrak{p}}$. We write $w_{\mathfrak{p}}(f)=-\infty$ if $f\equiv 0 \pmod{\mathfrak{p}}$. 

As in the classical case, there is a deep connection between supersingular Drinfeld modules in characteristic $\mathfrak{p}$ and forms with lower filtration than weight. It is this connection which we will exploit to refine the connection between the Weierstrass points of $X_0(\mathfrak{p})$ and the supersingular locus.

To begin explaining the connection, let again $g_k$ be the Drinfeld Eisenstein series of weight $q^k-1$ and type $0$ for $\Gl_2(A)$. As shown in \cite{gekeler}, if $\mathfrak{p}$ is an ideal generated by a prime polynomial of degree $d$, we have $g_d \equiv 1 \pmod{\mathfrak{p}}$. Thus the form $g_d$ has filtration equal to $0$, which is strictly less than its weight. We note further that this is the only relation upon reducing modulo $\mathfrak{p}$. 

To connect $g_d$ to the supersingular Drinfeld modules, we must first define the so-called \emph{companion polynomial} to a Drinfeld modular form. In \cite{dobiwagewang}, the authors remark that the fact that the algebra of Drinfeld modular forms for $\Gl_2(A)$ is generated by $g$ and $h$ implies the following: For $k$ a positive integer and $l$ a class in $\mathbb{Z}/(q-1)$, define $\mu(k,l)$ and $\gamma(k,l)$ to be the unique pair of integers such that 
\begin{gather}\label{eq:uniqueness}
\mu(k,l) \equiv l \pmod{q-1},  \notag \\
0\leq \gamma(k,l) \leq q, \\
\text{and} \quad k=\mu(k,l)(q+1)+\gamma(k,l)(q-1). \notag
\end{gather}
Then to every Drinfeld modular form of weight $k$ and type $l$ for $\Gl_2(A)$ one can associate a unique polynomial $P(f,x) \in C[x]$ such that
\begin{equation}\label{jpoly}
f= g^{\gamma(k,l)} h^{\mu(k,l)}P(f,j)
\end{equation}
where $j$ is the (normalized) $j$-invariant, $j=\frac{g^{q+1}}{-h^{q-1}}$. Since $g$ only has a single zero at the elliptic point with $j=0$, the first consequence of this fact is that any form $f$ of a given weight $k$ and type $l$ vanishes to order at least $\gamma(k,l)$ at $j=0$. We will call these zeroes the \emph{trivial zeroes of $f$}. The second consequence of this fact is that since $h$ is nonzero on the Drinfeld upper half-plane, the polynomial $P$ can be thought of as an object which keeps track of the zeroes of the form $f$ that are not trivial.

If we define the \emph{Drinfeld supersingular locus} to be the following polynomial:
\begin{equation*}
S_{\mathfrak{p}}(x) = \prod_{\substack{\phi \text{ defined over } \overline{\mathbb{F}}_{\mathfrak{p}} \\ \phi \text{ supersingular}}} (x-j(\phi)),
\end{equation*}
then we have
\begin{equation*}
S_{\mathfrak{p}}(x) \equiv x^{\gamma(q^d-1,0)} P(g_d,x) \pmod{\mathfrak{p}},
\end{equation*}
where $\gamma(q^d-1,0)$ is $0$ if $d$ is even and $1$ if $d$ is odd. 

Therefore upon reduction modulo $\mathfrak{p}$, the form $g_d$, which has lower filtration than weight modulo $\mathfrak{p}$, has a single zero at each supersingular point. This fact is an example of a more general phenomenon:

\begin{proposition}[Dobi-Wage-Wang\cite{dobiwagewang}]\label{dobiwagewang}
Assuming the notation above, let $f$ be a Drinfeld modular form for $\Gl_2(A)$ of weight $k$ and type $l$ with rational $\mathfrak{p}$-integral $u$-series coefficients and finite filtration $w_{\mathfrak{p}}(f)$. Define $\alpha =\frac{k-w_{\mathfrak{p}}(f)}{q^d-1}$ and $a= \left\lfloor \frac{\alpha \gamma(q^d-1,0)q + \gamma(k,l)}{q+1} \right\rfloor$. Then the polynomial $x^a P(f,x)$ is divisible by $S_{\mathfrak{p}}(x)^{\alpha}$ in $\mathbb{F}_{\mathfrak{p}}[x]$, where $\mathbb{F}_{\mathfrak{p}}$ is the field $A/\mathfrak{p}$.
\end{proposition}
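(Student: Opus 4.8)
The plan is to reduce the assertion to an identity between the companion polynomials $P(f,x)$ and $P(g_d,x)$ modulo $\mathfrak{p}$, and then extract the divisibility from the congruence $S_{\mathfrak{p}}(x)\equiv x^{\gamma(q^d-1,0)}P(g_d,x)\pmod{\mathfrak{p}}$ recalled above. Write $w=w_{\mathfrak{p}}(f)$ and fix a modular form $f'$ for $\Gl_2(A)$ of weight $w$ and type $l$, with $\mathfrak{p}$-integral $u$-series coefficients, such that $f\equiv f'\pmod{\mathfrak{p}}$; such an $f'$ exists by definition of the filtration, and it may be taken of type $l$ because the weight modulo $q^d-1$ and the type are invariants of the reduction of a $\Gl_2(A)$-form (this being exactly the fact, recalled above, that the only congruence among the monomials $g^{a}h^{b}$ modulo $\mathfrak{p}$ is induced by $g_d\equiv 1$). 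In particular $q^d-1\mid k-w$, so $\alpha=(k-w)/(q^d-1)$ is a non-negative integer, and since $g_d^{\alpha}\equiv 1\pmod{\mathfrak{p}}$ we obtain
\begin{equation*}
g_d^{\alpha}f'\equiv f'\equiv f\pmod{\mathfrak{p}},
\end{equation*}
where $g_d^{\alpha}f'$ is a modular form for $\Gl_2(A)$ of weight $k$ and type $l$.

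The engine of the argument is a multiplicativity property of $F\mapsto P(F,x)$. Since $g$ and $h$ have $u$-series with leading coefficients $1$ and $-1$ and $j$ with leading coefficient $-1$, the companion polynomial of any $\Gl_2(A)$-form with $\mathfrak{p}$-integral $u$-series coefficients has $\mathfrak{p}$-integral coefficients, and (for fixed weight and type) its reduction modulo $\mathfrak{p}$ depends only on the reduction of the form. Using (\ref{jpoly}) and $g^{q+1}=-jh^{q-1}$, one verifies that for $\Gl_2(A)$-forms $F_1,F_2$ of weights $k_1,k_2$ and types $l_1,l_2$,
\begin{equation*}
P(F_1F_2,x)=(-x)^{n}\,P(F_1,x)\,P(F_2,x),\qquad n=\left\lfloor\frac{\gamma(k_1,l_1)+\gamma(k_2,l_2)}{q+1}\right\rfloor ,
\end{equation*}
holds over $C$; the key point is that $\gamma(k_1,l_1)+\gamma(k_2,l_2)-\gamma(k_1+k_2,l_1+l_2)$ is a non-negative multiple of $q+1$, which one reads off from $\mu(k_i,l_i)\equiv l_i\pmod{q-1}$ together with $0\le\gamma(\cdot,\cdot)\le q$. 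Applying this first to evaluate $P(g_d^{\alpha},x)$ and then to multiply by $P(f',x)$, reducing modulo $\mathfrak{p}$, and invoking $g_d^{\alpha}f'\equiv f\pmod{\mathfrak{p}}$, I get
\begin{equation*}
P(f,x)\equiv(-x)^{m}\,P(g_d,x)^{\alpha}\,P(f',x)\pmod{\mathfrak{p}},\qquad m=\left\lfloor\frac{\alpha\,\gamma(q^d-1,0)+\gamma(w,l)}{q+1}\right\rfloor ;
\end{equation*}
the same congruence argument gives $\gamma(k,l)\equiv\alpha\,\gamma(q^d-1,0)+\gamma(w,l)\pmod{q+1}$, so that $m=\bigl(\alpha\,\gamma(q^d-1,0)+\gamma(w,l)-\gamma(k,l)\bigr)/(q+1)$ is a non-negative integer.

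To conclude, I raise $S_{\mathfrak{p}}(x)\equiv x^{\gamma(q^d-1,0)}P(g_d,x)\pmod{\mathfrak{p}}$ to the power $\alpha$ and combine with the displayed congruence for $P(f,x)$: assuming $a+m\ge\alpha\,\gamma(q^d-1,0)$ (checked below), this yields
\begin{equation*}
x^{a}P(f,x)\equiv(-1)^{m}\,x^{\,a+m-\alpha\,\gamma(q^d-1,0)}\,S_{\mathfrak{p}}(x)^{\alpha}\,P(f',x)\pmod{\mathfrak{p}},
\end{equation*}
so that $S_{\mathfrak{p}}(x)^{\alpha}$ divides $x^{a}P(f,x)$ in $\mathbb{F}_{\mathfrak{p}}[x]$. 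Finally, by the closed form for $m$, the inequality $a+m\ge\alpha\,\gamma(q^d-1,0)$ reads $a\ge\bigl(q\,\alpha\,\gamma(q^d-1,0)+\gamma(k,l)-\gamma(w,l)\bigr)/(q+1)$; its right-hand side equals the integer $\alpha\,\gamma(q^d-1,0)-m$ and is at most $\bigl(q\,\alpha\,\gamma(q^d-1,0)+\gamma(k,l)\bigr)/(q+1)$ because $\gamma(w,l)\ge 0$, hence at most $a=\lfloor(q\,\alpha\,\gamma(q^d-1,0)+\gamma(k,l))/(q+1)\rfloor$, as required.

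The step with genuine content — everything else being formal manipulation or quoted from \cite{gekeler} and the paragraphs above — is the multiplicativity identity for $P(\cdot,x)$: one must control how the exponents $\gamma(k,l)$ and $\mu(k,l)$ behave under products of forms, and in particular confirm that rewriting the excess power of $g$ via $g^{q+1}=-jh^{q-1}$ produces an honest power of $(-x)$, i.e. that $\gamma(k_1,l_1)+\gamma(k_2,l_2)\equiv\gamma(k_1+k_2,l_1+l_2)\pmod{q+1}$ rather than merely modulo $(q+1)/\gcd(2,q+1)$. This is where the bookkeeping is most delicate, and where the type congruences $\mu(k,l)\equiv l\pmod{q-1}$ enter in an essential way; the remaining estimates with the floor function are routine.
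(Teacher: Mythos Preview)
The paper does not give its own proof of this proposition; it is quoted from \cite{dobiwagewang} and used as a black box. Your argument is correct and is in fact the natural one: the multiplicativity identity $P(F_1F_2,x)=(-x)^{n}P(F_1,x)P(F_2,x)$ you isolate is exactly the computation the paper carries out in Proposition~\ref{P:companionpolys} for the special case $F_2=g_d$, and iterating it $\alpha$ times together with $S_{\mathfrak{p}}(x)\equiv x^{\gamma(q^d-1,0)}P(g_d,x)\pmod{\mathfrak{p}}$ gives the divisibility. The paper itself later remarks (in the proof of Theorem~\ref{T:main}) that ``the proof of Proposition~\ref{dobiwagewang} can be adapted'' to a variant statement, which confirms that the argument in \cite{dobiwagewang} runs along the same lines as yours.

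One small point worth making explicit in your write-up: your assertion that $f'$ may be taken of the \emph{same} type $l$ as $f$ is justified because the type of a nonzero $\Gl_2(A)$-form is visible in which residue classes modulo $q-1$ support its $u$-expansion, hence is preserved under congruence modulo $\mathfrak{p}$. You gesture at this via ``the only congruence is $g_d\equiv 1$,'' which is correct but a reader may appreciate the one-line reason.
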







Proposition \ref{dobiwagewang} when applied to a certain Drinfeld modular for $W(z)$ defined in Section \ref{modularwronskian}, immediately implies Theorem \ref{T:main}. To obtain the more precise result given in Theorem \ref{T:ahlgrenono} we will need the following proposition:

\begin{proposition}\label{P:companionpolys}
Let $f$ be a Drinfeld modular form of weight $k$ and type $l$ for $\Gl_2(A)$. 
\begin{enumerate}
\item \label{evens} If $d$ is even, then we have
\begin{equation*}
P(fg_d,x) \equiv P(g_d,x) P(f,x) \pmod{\mathfrak{p}}.
\end{equation*}
\item \label{odds} If $d$ is odd, then 
\begin{equation*}
P(f g_d,x) \equiv 
\begin{cases}
-x P(g_d, x) P(f,x)\pmod{\mathfrak{p}} & \text{if $\gamma(k,l) =q$,}\\
P(g_d,x) P(f,x)\pmod{\mathfrak{p}} & \text{otherwise.}
\end{cases}
\end{equation*}
\end{enumerate}
\end{proposition}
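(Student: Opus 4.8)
The plan is to compute both sides of the claimed congruence using the defining relation \eqref{jpoly} together with the congruence $g_d \equiv 1 \pmod{\mathfrak p}$ and the fact that $P$ is determined by the isobaric decomposition in $C[g,h]$. First I would record the key input: since $g_d \equiv 1 \pmod{\mathfrak p}$, we have $fg_d \equiv f \pmod{\mathfrak p}$ as $u$-series, but $fg_d$ has weight $k + q^d - 1$ (and the same type $l$), so its companion polynomial is computed with respect to the exponents $\gamma(k+q^d-1,l)$ and $\mu(k+q^d-1,l)$, which differ from $\gamma(k,l), \mu(k,l)$. So the content of the proposition is bookkeeping on how the normalized exponents $\mu$ and $\gamma$ change when the weight is increased by $q^d-1$, combined with the elementary identity $g^{q+1} = -h^{q-1}j$ that lets one trade a factor of $g^{q+1}$ for $-h^{q-1}j$.

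The main computation I would carry out is to determine $\mu(k+q^d-1,l)$ and $\gamma(k+q^d-1,l)$ in terms of $\mu(k,l)$ and $\gamma(k,l)$. Write $\gamma_0 := \gamma(q^d-1,0)$, which is $0$ if $d$ is even and $1$ if $d$ is odd, and let $\alpha := (q^d-1)/(q^2-1)$ in the even case or $(q^d-q)/(q^2-1) = g_{\mathfrak p}$-adjacent quantity in the odd case — in either case $q^d - 1 = \mu_0(q+1) + \gamma_0(q-1)$ for the appropriate integer $\mu_0 \ge 0$. Then $k + q^d - 1 = (\mu(k,l)+\mu_0)(q+1) + (\gamma(k,l)+\gamma_0)(q-1)$. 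If $\gamma(k,l) + \gamma_0 \le q$ this is already the normalized form, so $\gamma(k+q^d-1,l) = \gamma(k,l)+\gamma_0$ and $\mu(k+q^d-1,l) = \mu(k,l)+\mu_0$; if $\gamma(k,l)+\gamma_0 = q+1$ (which can only happen when $\gamma(k,l)=q$ and $d$ is odd) one must renormalize via $(q+1) \cdot (q-1) \cdot 1 = (q-1)(q+1)$, trading one unit of $(q+1)$-weight for $(q-1)$ units... more precisely $(q+1)(q-1) = (q-1)(q+1)$ forces the substitution $\gamma \mapsto \gamma - (q+1)$, $\mu \mapsto \mu + (q-1)$, giving $\gamma(k+q^d-1,l) = 0$ and $\mu(k+q^d-1,l) = \mu(k,l)+\mu_0 + q - 1$. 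This case split is exactly the one appearing in the statement.

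With these exponents in hand I would finish by substituting into \eqref{jpoly}. In the generic case, $fg_d = g^{\gamma(k,l)+\gamma_0} h^{\mu(k,l)+\mu_0} P(fg_d, j)$, while $P(g_d,x) P(f,x)$ satisfies $g_d f = g^{\gamma_0}h^{\mu_0} P(g_d,j) \cdot g^{\gamma(k,l)} h^{\mu(k,l)} P(f,j) = g^{\gamma(k,l)+\gamma_0}h^{\mu(k,l)+\mu_0} P(g_d,j)P(f,j)$; comparing and using that $g,h$ are algebraically independent and $h$ is nonvanishing on $\Omega$ gives $P(fg_d,x) \equiv P(g_d,x)P(f,x) \pmod{\mathfrak p}$ (the congruence, not equality, because we use $g_d \equiv 1$ only to identify $P(fg_d,x)$ with the reduction — actually one gets it on the nose from $fg_d \equiv f$ after dividing by $g^{\cdots}h^{\cdots}$, reduced mod $\mathfrak p$). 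In the exceptional case $\gamma(k,l)=q$, $d$ odd, one has the extra factor: $g^{\gamma(k,l)+\gamma_0} = g^{q+1} = -h^{q-1} j$, so $g^{q+1}h^{\mu(k,l)+\mu_0}P(g_d,j)P(f,j) = h^{\mu(k,l)+\mu_0+q-1}\bigl(-j\,P(g_d,j)P(f,j)\bigr)$, matching $h^{\mu(k+q^d-1,l)}P(fg_d,j)$ and yielding $P(fg_d,x) \equiv -x\,P(g_d,x)P(f,x) \pmod{\mathfrak p}$. The only mild obstacle is making sure the normalization case analysis for $\mu,\gamma$ is exhaustive — in particular checking that $\gamma(k,l) + \gamma_0$ never exceeds $q+1$, which is clear since $\gamma(k,l) \le q$ and $\gamma_0 \le 1$ — and that the passage from the $u$-series congruence $fg_d \equiv f$ to a polynomial congruence in $x$ is legitimate, which follows because reduction mod $\mathfrak p$ of a $\mathfrak p$-integral form is determined by its $u$-expansion and $g,h,j$ all have $\mathfrak p$-integral $u$-expansions with $g,h$ still algebraically independent mod $\mathfrak p$.
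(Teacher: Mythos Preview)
Your approach is essentially the same as the paper's: both arguments reduce to bookkeeping on how the normalized exponents $\gamma,\mu$ from \eqref{eq:uniqueness} shift when the weight increases by $q^d-1$, with the same case split according to whether $\gamma(k,l)+\gamma(q^d-1,0)$ exceeds $q$.

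The one organizational difference is worth noting. The paper starts from the congruence $fg_d\equiv f\pmod{\mathfrak p}$, writes both sides via \eqref{jpoly}, solves for $P(fg_d,j)$ in terms of $P(f,j)$ and a monomial in $g,h$, and then identifies that monomial with $P(g_d,j)$ using the second congruence $1\equiv g_d=g^{\gamma_0}h^{\mu_0}P(g_d,j)$. You instead multiply the exact factorizations $f=g^{\gamma(k,l)}h^{\mu(k,l)}P(f,j)$ and $g_d=g^{\gamma_0}h^{\mu_0}P(g_d,j)$ and compare directly with $fg_d=g^{\gamma(k+q^d-1,l)}h^{\mu(k+q^d-1,l)}P(fg_d,j)$. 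Your route never actually uses $g_d\equiv 1\pmod{\mathfrak p}$ and therefore proves the \emph{equality} $P(fg_d,x)=P(g_d,x)P(f,x)$ (respectively $-xP(g_d,x)P(f,x)$) rather than merely the congruence; your parenthetical hesitation on this point is unnecessary. This is a mild improvement over the paper's argument, though of course the congruence is all that is used downstream.
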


\begin{proof}
\noindent
\underline{The case of $d$ even} Since $g_d \equiv 1 \pmod{\mathfrak{p}}$, we have $f \equiv f g_d \pmod{\mathfrak{p}}$. Furthermore, if $f$ is of weight $k$ and type $l$, then $fg_d$ is of weight $k + q^d-1$ and type $l$. Using the statement of equation (\ref{jpoly}) we have
\begin{equation*}
g^{\gamma(k,l)} h^{\mu(k,l)} P(f,j) \equiv g^{\gamma(k+q^d-1,l)} h^{\mu(k+q^d-1,l)} P(fg_d,j) \pmod{\mathfrak{p}}.
\end{equation*}
Then
\begin{equation*}
P(f g_d, j ) \equiv h^{\mu(k,l)- \mu(k+q^d-1,l)} g^{\gamma(k,l)- \gamma(k+q^d-1,l)} P(f,j) \pmod{\mathfrak{p}}.
\end{equation*}
 
We have $\mu(k,l) \equiv l \equiv \mu(k+q^d-1,l) \pmod{q-1}$, so let $N$ be the integer such that $\mu(k+q^d-1,l) - \mu(k,l) = N(q-1)$. Combining the equations
\begin{equation*}
k = \gamma(k,l)(q-1) + \mu(k,l)(q+1)
\end{equation*}
and
\begin{equation*}
k +q^d-1 = \gamma(k+q^d-1,l)(q-1) + \mu(k+q^d-1,l)(q+1),
\end{equation*}
we obtain that 
\begin{equation}\label{E:unique}
q^d-1 = \left( \gamma(k+q^d-1,l)-\gamma(k,l)\right) (q-1) +N(q-1)(q+1).
\end{equation}

Since both $\gamma(k+q^d-1,l)$ and $\gamma(k,l)$ are between 0 and $q$ inclusively, then
\begin{equation*}
-q \leq \gamma(k+q^d-1,l)-\gamma(k,l) \leq q.
\end{equation*}

If it were the case that
\begin{equation*}
-q \leq \gamma(k+q^d-1,l)-\gamma(k,l) <0,
\end{equation*}
then by the uniqueness of the integers $\mu(q^d-1,0)$ and $\gamma(q^d-1,0)$ in the equation (\ref{E:unique}), we must have 
\begin{equation*}
 \gamma(k+q^d-1,l)-\gamma(k,l) = \gamma(q^d-1,0) - q-1 = -q-1,
\end{equation*}
a contradiction. Therefore 
\begin{equation*}
0 \leq \gamma(k+q^d-1,l)-\gamma(k,l) \leq q,
\end{equation*}
and again using uniqueness in equation (\ref{E:unique}),
\begin{equation*}
0 = \gamma(q^d-1,0) =  \gamma(k+q^d-1,l)-\gamma(k,l),
\end{equation*}
and
\begin{equation*}
N(q-1) = \mu(q^d-1,0).
\end{equation*}

Then 
\begin{equation*}
P(f g_d, j ) \equiv h^{-\mu(q^d-1,0)} P(f,j) \pmod{\mathfrak{p}}.
\end{equation*}

Solving for $P(g_d,j)$ in 
\begin{equation*}
1 \equiv g_d = h^{\mu(q^d-1,0)} P(g_d,j)
\end{equation*}
completes the proof.
\newline

\noindent
\underline{The case of $d$ odd} The proof proceeds as in the even case, except that we cannot rule out the case 
\begin{equation*}
-q \leq \gamma(k+q^d-1,l)-\gamma(k,l) <0.
\end{equation*}
In that case, we must have
\begin{equation*}
\gamma(k+q^d-1,l)-\gamma(k,l) = \gamma(q^d-1,0) - q-1 = 1-q-1 = -q,
\end{equation*}
which forces $\gamma(k,l) = q$. Furthermore, we have
\begin{equation*}
\mu(q^d-1,0) = (N-1)(q-1),
\end{equation*}
where $N$ is such that $\mu(k+q^d-1,l) - \mu(k,l) = N(q-1)$ as in the even case.

Putting this together we have
\begin{align*}
P(f g_d, j ) &\equiv h^{\mu(k,l)- \mu(k+q^d-1,l)} g^{\gamma(k,l)- \gamma(k+q^d-1,l)} P(f,j) \pmod{\mathfrak{p}}\\
 & \equiv h^{-(N-1)(q-1)-(q-1)}g^q P(f,j) \pmod{\mathfrak{p}}
\end{align*}

Multiplying both sides by
\begin{equation}\label{eq:dodd}
1 \equiv g_d = g h^{\mu(q^d-1,0)} P(g_d,j)
\end{equation}
gives
\begin{equation*}
P(f g_d, j) \equiv \frac{g^{q+1}}{h^{q-1}} P(g_d,j) P(f,j) \pmod{\mathfrak{p}},
\end{equation*}
and since $j = \frac{g^{q+1}}{-h^{q-1}}$, this completes the proof of this case.

If 
\begin{equation*}
0 \leq \gamma(k+q^d-1,l)-\gamma(k,l) \leq q,
\end{equation*}
using uniqueness in equation (\ref{E:unique}), we must have
\begin{equation*}
1 = \gamma(q^d-1,0) =  \gamma(k+q^d-1,l)-\gamma(k,l),
\end{equation*}
and
\begin{equation*}
N(q-1) = \mu(q^d-1,0).
\end{equation*}

Then we may conclude similarly as in the even case that
\begin{equation*}
P(f g_d, j ) \equiv g^{-1}h^{-\mu(q^d-1,0)} P(f,j) \pmod{\mathfrak{p}},
\end{equation*}
and the result follows using equation (\ref{eq:dodd}) again.
\end{proof}

We end this subsection by recalling results from \cite{normandtrace} for the convenience of the reader:

\begin{theorem}[Theorem 1.1 of \cite{normandtrace}]\label{tracetheorem}
Let $q\geq 3$. There is a one-to-one correspondence between forms of weight $2$ and type $1$ for $\Gamma_0(\mathfrak{p})$ with rational $\mathfrak{p}$-integral $u$-series coefficients at $\infty$ and forms of weight $q^d+1$ and type $1$ for $\Gl_2(A)$ with rational $\mathfrak{p}$-integral $u$-series coefficients.
\end{theorem}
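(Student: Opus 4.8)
The plan is to produce a linear isomorphism between the two spaces of forms whose existence (as $C$-vector spaces, without the integrality condition) is already forced by a dimension count, and then to check that this isomorphism and its inverse preserve $\mathfrak{p}$-integrality of $u$-series coefficients at $\infty$. First I would identify the relevant multiplier. Recall that $g_d \equiv 1 \pmod{\mathfrak{p}}$ is a Drinfeld modular form of weight $q^d - 1$ and type $0$ for $\Gl_2(A)$, with $u$-series coefficients in $A$, and recall that the classical Fricke/Atkin--Lehner formalism produces from a weight-$2$ type-$1$ form for $\Gamma_0(\mathfrak{p})$ a form for $\Gl_2(A)$ by combining it with its image under the Fricke involution $W_{\mathfrak{p}}$. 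So the map I would write down is, schematically, $f \mapsto f \cdot E$, where $f$ is weight $2$ type $1$ for $\Gamma_0(\mathfrak{p})$ and $E$ is a carefully chosen weight $q^d - 1$ type $0$ form (built from $g_d$, or from an Eisenstein-type form for $\Gamma_0(\mathfrak{p})$ that is congruent to $1$ modulo $\mathfrak{p}$ away from the relevant cusp) so that the product descends to a form of weight $q^d + 1$ and type $1$ for the full group $\Gl_2(A)$. The key structural input is that a form for $\Gamma_0(\mathfrak{p})$ becomes a form for $\Gl_2(A)$ precisely when its behavior at the two cusps $0$ and $\infty$ of $X_0(\mathfrak{p})$ is matched up correctly, and multiplication by $E$ is what arranges this.

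The steps, in order, would be: (i) compute the dimension of $M_{2,1}(\Gamma_0(\mathfrak{p}))$ — by Riemann--Roch on $X_0(\mathfrak{p})$ this space is essentially $H^0$ of the canonical bundle twisted by the cusps, so its dimension comes out to $g_{\mathfrak{p}} + 1$ (or $g_\mathfrak{p}$ for the cuspidal part), matching Proposition \ref{dimensiongl}; (ii) exhibit the explicit linear map $\Phi$ from $M_{2,1}(\Gamma_0(\mathfrak{p}))$ to $M_{q^d+1,1}(\Gl_2(A))$ described above, and check it is well-defined, i.e.\ that the product really is holomorphic at $\infty$ and satisfies the full $\Gl_2(A)$-transformation law with the right type; (iii) check injectivity of $\Phi$ (the multiplier $E$ is not identically zero, and $M_{2,1}(\Gamma_0(\mathfrak p))$ has no zero divisors as a subspace of the function field), whence by (i) $\Phi$ is an isomorphism of $C$-vector spaces; (iv) verify that $\Phi$ sends forms with rational $\mathfrak{p}$-integral $u$-expansions at $\infty$ to forms with rational $\mathfrak{p}$-integral $u$-expansions, using that $E$ has such an expansion and that its leading $u$-coefficient is a $\mathfrak{p}$-unit (this is where $g_d \equiv 1$ is used); and (v) prove the converse integrality statement, namely that if $\Phi(f)$ has $\mathfrak{p}$-integral coefficients then so does $f$ — equivalently that $E^{-1}$ times a $\mathfrak{p}$-integral form for $\Gl_2(A)$ in the image of $\Phi$ is again $\mathfrak{p}$-integral. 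For (v) one divides $u$-series and argues that division by a $1$-unit with $\mathfrak p$-integral coefficients preserves $\mathfrak p$-integrality, after first checking that the relevant $q$-expansion really has the shape of a $1$-unit times a power of $u$.

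I expect the main obstacle to be step (v), the converse direction of the integrality claim, together with the bookkeeping of types and leading coefficients in step (ii)--(iv). Going from $\Gamma_0(\mathfrak{p})$ up to $\Gl_2(A)$ by multiplication is the easy direction; coming back down requires that one can divide out the multiplier $E$ without losing $\mathfrak{p}$-integrality, which forces $E$ to be invertible in the ring of $\mathfrak{p}$-integral $u$-series — this is exactly why one wants a multiplier congruent to $1$ modulo $\mathfrak p$, i.e.\ why $g_d$ (or its $\Gamma_0(\mathfrak p)$-analogue) is the right choice. A secondary subtlety is that the naive product $f \cdot g_d$ has weight $2 + (q^d-1) = q^d+1$ but is only manifestly modular for $\Gamma_0(\mathfrak{p})$, not for $\Gl_2(A)$; promoting it to a full-level form presumably requires using the Fricke eigenform structure (this is the point at which the hypothesis $q$ odd and the companion result Theorem \ref{T:qodd} enter, via the remark following the theorem), so the careful statement of (ii) will need to invoke how $W_{\mathfrak p}$ interchanges the cusps and how the trace/norm maps of \cite{normandtrace} realize the descent. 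The degree-$2$ weight and the small type $1$ keep all the combinatorics of $\mu, \gamma$ from (\ref{eq:uniqueness}) manageable, so once the multiplier is pinned down the rest should be routine.
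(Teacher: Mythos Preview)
The paper does not prove this statement; it is Theorem 1.1 of the cited paper \cite{normandtrace} and is quoted here without argument, so there is no proof in the present paper to compare against.

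That said, your proposal has a genuine gap at step (ii). You propose $f\mapsto f\cdot E$ for a fixed weight-$(q^d-1)$ form $E$, hoping the product lands in $M_{q^d+1,1}(\Gl_2(A))$. But multiplication by a fixed form cannot enlarge the invariance group: if $E$ is modular for $\Gl_2(A)$ then $f\cdot E$ is still only $\Gamma_0(\mathfrak{p})$-modular, and if $E$ is merely $\Gamma_0(\mathfrak{p})$-modular then a single $E$ cannot simultaneously repair the $\Gl_2(A)$-transformation failure of every $f$ in a $g_{\mathfrak{p}}$-dimensional space. You notice the problem and gesture toward ``Fricke eigenform structure'' and the trace maps of \cite{normandtrace}, but at that point the plan stops being a proof and becomes a pointer back to the reference you are trying to reprove. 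Two smaller misfires: the hypothesis here is $q\geq 3$, not $q$ odd, and no Fricke-eigenform assumption is imposed --- that hypothesis belongs to Theorem \ref{normtheorem}, not this one. Finally, the way the correspondence is actually \emph{used} (proof of Theorem \ref{gl2wronskian}) is that it sends each $f_i$ to an $F_i$ with $f_i\equiv F_i\pmod{\mathfrak{p}}$; this congruence is the operative content, and your outline never establishes it. The level change from $\Gamma_0(\mathfrak{p})$ to $\Gl_2(A)$ has to come from an averaging operation such as the trace $\sum_{\gamma\in\Gamma_0(\mathfrak{p})\backslash\Gl_2(A)}(\cdot)|[\gamma]$, combined with a weight-raising step; the substance of \cite{normandtrace} is exactly the verification that the resulting map is congruent to the identity modulo $\mathfrak{p}$ and bijective on $\mathfrak{p}$-integral forms.
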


We will also need a stronger version of Theorem 1.2 from \cite{normandtrace}, and take this opportunity to correct a typo in the type of the form $\widetilde{\N(f)}$:

\begin{theorem}\label{normtheorem}
Let $f$ be a Drinfeld modular form for $\Gamma_0(\mathfrak{p})$ of weight $k$ and type $l$ with rational, $\mathfrak{p}$-integral $u$-series coefficients at $\infty$. Suppose further that $f$ is an eigenform of the Fricke involution. Let
 \begin{equation*}
\widetilde{\N(f)}(z) = \pi^{q^dk/2} \prod_{\gamma \in \Gamma_0(\mathfrak{p})\backslash \Gl_2(A)} f |_{k,l} [\gamma].
 \end{equation*}
Then $\widetilde{\N(f)}$ has rational, $\mathfrak{p}$-integral $u$-series coefficients and
\begin{equation*}
\widetilde{\N(f)} \equiv f^2 \pmod{\mathfrak{p}}.
\end{equation*}
Furthermore, $\widetilde{\N(f)}$ is a form of weight $(q^d+1)k$ and type $2l$.
\end{theorem}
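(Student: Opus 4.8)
The plan is to treat $\widetilde{\N(f)}$ as the multiplicative analogue of the trace map considered in \cite{normandtrace}; indeed the statement is, apart from the bookkeeping of weight and type, essentially Theorem~1.2 of that paper, so I would follow that argument, the genuinely new points being that the computation is carried through for an arbitrary pair $(k,l)$ rather than the special case treated there, and the resulting correction to the type. First I would check that $\widetilde{\N(f)}$ is a Drinfeld modular form for $\Gl_2(A)$ and compute its weight and type. Because the normalized slash operator $|_{k,l}$ of~(\ref{slash}) restricts to a genuine right action of $\Gl_2(A)$ (there the determinant normalization is trivial, since $\det\delta\in\mathbb{F}_q^{\times}$ forces $\zeta_\delta=\det\delta$), and $f|_{k,l}[\delta]=f$ for $\delta\in\Gamma_0(\mathfrak{p})$, each factor $f|_{k,l}[\gamma]$ depends only on the coset $\Gamma_0(\mathfrak{p})\gamma$, so the product is well defined; right translation by any $\delta\in\Gl_2(A)$ permutes the cosets, and a short computation with the automorphy factor then shows that $\prod_\gamma f|_{k,l}[\gamma]$, hence $\widetilde{\N(f)}$ (the constant $\pi^{q^dk/2}$ being a scalar), transforms with the automorphy factor of weight $(q^d+1)k$ and type $(q^d+1)l$. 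Since $[\Gl_2(A):\Gamma_0(\mathfrak{p})]=\#\mathbb{P}^1(A/\mathfrak{p})=q^d+1$ and $q^d\equiv1\pmod{q-1}$, the weight is $(q^d+1)k$ and the type is $2l$, which is the corrected value. Rigid analyticity on $\Omega$ is immediate, and analyticity at the single cusp $\infty$ of $\Gl_2(A)$ follows because the factor from the trivial coset is $f$ itself, analytic at $\infty$, while each of the remaining factors is, up to a scalar and the degeneracy matrix $\left(\begin{smallmatrix}1&0\\0&\pi\end{smallmatrix}\right)$, the form $f$ near the cusp $0$ of $\Gamma_0(\mathfrak{p})$, at which $f$ is also analytic; this is precisely where the hypothesis that $f$ be analytic at both cusps enters.

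The heart of the argument is the $\mathfrak{p}$-integrality of the $u$-expansion of $\widetilde{\N(f)}$ together with the congruence $\widetilde{\N(f)}\equiv f^2\pmod{\mathfrak{p}}$. Here I would choose as coset representatives the identity together with $\gamma_b=\left(\begin{smallmatrix}0&-1\\1&b\end{smallmatrix}\right)$ for $b$ running over a system of representatives of $A/\mathfrak{p}$, so that $\widetilde{\N(f)}=\pi^{q^dk/2}\,f\cdot\prod_b f|_{k,l}[\gamma_b]$. Using the factorization $\gamma_b=W_{\mathfrak{p}}\left(\begin{smallmatrix}1/\pi&b/\pi\\0&1\end{smallmatrix}\right)$ with $W_{\mathfrak{p}}=\left(\begin{smallmatrix}0&-1\\\pi&0\end{smallmatrix}\right)$ the Fricke matrix, together with the hypothesis $f|_{k,l}[W_{\mathfrak{p}}]=\epsilon f$, $\epsilon^2=1$, each factor equals $\epsilon\,\pi^{-k/2}f((z+b)/\pi)$, and one obtains the clean identity $\widetilde{\N(f)}=\epsilon\,f(z)\prod_{b}f((z+b)/\pi)$: the $q^d$ powers of $\pi^{-k/2}$ have cancelled against $\pi^{q^dk/2}$, and inspecting the $u$-expansions (the product $\prod_b u((z+b)/\pi)$ being a constant multiple of $u(z)$) gives the $\mathfrak{p}$-integrality. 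To compute the reduction modulo $\mathfrak{p}$ I would use the behaviour of the parameter $u$ under $z\mapsto(z+b)/\pi$, which is governed by the reduction of the Carlitz module at $\mathfrak{p}$: since $\rho(\pi)$ reduces to a unit times $\tau^{d}$, one has a relation of the shape $e_A(\pi z)\equiv(\text{unit})\,e_A(z)^{q^d}\pmod{\mathfrak{p}}$, which lets one re-express each $f((z+b)/\pi)$ modulo $\mathfrak{p}$ as a power series in $u^{q^d}$ and see that the product of the $q^d$ degenerate factors collapses; combined with the factor $f$ from the trivial coset, and with the sign reabsorbed since $\epsilon^{q^d}=\epsilon$, this yields $\widetilde{\N(f)}\equiv f\cdot f=f^2\pmod{\mathfrak{p}}$.

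The step I expect to be the real obstacle is this last reduction modulo $\mathfrak{p}$: one must keep simultaneous track of the Carlitz period $\tilde{\pi}$, the leading-coefficient normalizations built into~(\ref{slash}), and the constant $\pi^{q^dk/2}$, and verify that all the auxiliary units and powers of $\tilde{\pi}$ that enter are $\mathfrak{p}$-adic units, so that they disturb neither the $\mathfrak{p}$-integrality nor the congruence — in particular that the constant produced by the Carlitz reduction is exactly the Fricke eigenvalue. Everything else — the modularity, the weight, the type, and analyticity at the cusps — is routine once the coset-permutation and cocycle formalism is in place, and for the integrality-and-congruence step one may lean heavily on the corresponding argument of \cite{normandtrace}, the only genuinely new work being to carry the general weight $k$ and type $l$ through the computation.
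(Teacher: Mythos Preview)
Your approach is essentially the paper's, which simply cites Proposition~3.9 and Corollary~5.4 of \cite{normandtrace} to obtain the identity $\widetilde{\N(f)}=f(z)\prod_{\lambda}f((z+\lambda)/\pi)$ and the congruence $f(z)\prod_{\lambda}f((z+\lambda)/\pi)\equiv f^2\pmod{\mathfrak{p}}$, and then computes the weight and type exactly as you do via the index $[\Gl_2(A):\Gamma_0(\mathfrak{p})]=q^d+1$ and the reduction $(q^d+1)l\equiv 2l\pmod{q-1}$. You are in effect re-deriving those cited results rather than quoting them, which is fine; the one place to be careful is that your expression carries an overall factor $\epsilon^{q^d}=\epsilon$ from the Fricke eigenvalue, and it is in the proof of Corollary~5.4 (not in a separate cancellation you supply) that this sign is matched against the constant coming from the Carlitz reduction, so you should not expect to dispose of it independently.
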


\begin{proof}
We first note that the hypothesis in \cite{normandtrace} that $f$ have integral $u$-series coefficients at $\infty$ is unnecessary; it suffices that the coefficients be rational and $\mathfrak{p}$-integral for all of the arguments in the paper to work.

Corollary 5.4 of \cite{normandtrace} asserts that for $f$ as in the statement of the theorem,
\begin{equation*}
 f(z) \prod_{\substack{\lambda \in A \\ \degree \lambda <d}} f\left( \frac{z+\lambda}{\pi}\right) \equiv f(z)^2 \pmod{\mathfrak{p}}.
\end{equation*}

By Proposition 3.9 of \cite{normandtrace}, 
\begin{equation*}
\N(f) =  \prod_{\gamma \in \Gamma_0(\mathfrak{p})\backslash \Gl_2(A)} f |_{k,l} [\gamma] = \frac{1}{\pi^{q^dk/2}}\; f \prod_{\substack{\lambda \in A \\ \degree \lambda <d}} f\left( \frac{z+\lambda}{\pi}\right),
\end{equation*}
which proves the equivalence modulo $\mathfrak{p}$.

Because $\Gamma_0(\mathfrak{p})$ has index $q^d+1$ in $\Gl_2(A)$, the weight of $\N(f)$ is $(q^d+1)k$, and the type is $(q^d+1)l$. However, the type of a form for $\Gl_2(A)$ is an equivalence class in $\mathbb{Z}/(q-1)$ and
\begin{equation*}
(q^d+1)l  = (q^d-1)l + 2l \equiv 2l \pmod{q-1}.
\end{equation*}
\end{proof}

\subsection{Drinfeld modular curves}\label{xnotp}
We now turn our attention to Drinfeld modular curves, and more specifically to the family $X_0(\mathfrak{p})$.

For $\Gamma$ a congruence subgroup of $\Gl_2(A)$, the action of $\Gamma$ on the Drinfeld upper half-plane $\Omega$ by fractional linear transformations has finite stabilizer for each $z \in \Omega$. It follows thus that the quotient $\Gamma\backslash \Omega$ is also a rigid analytic space. Moreover, it is connected and smooth of dimension one. The curve $\Gamma\backslash \Omega$ can be shown to arise from an algebraic curve:

\begin{theorem}[Drinfeld \cite{drinfeld}]\label{Drinfeld}
There exists a smooth irreducible affine algebraic curve $Y_{\Gamma}$ defined over $C$ such that $\Gamma \backslash \Omega$ and the underlying analytic space $Y_{\Gamma}^{\text{an}}$ of $Y_{\Gamma}$ are canonically isomorphic as analytic spaces over $C$.
\end{theorem}

We note further that the curve $Y_{\Gamma}$ is unique up to isomorphism, and is in fact defined over a finite abelian extension of $K$, $K_{\Gamma}$. For each $Y_{\Gamma}$ there exists a unique smooth projective curve $X_{\Gamma}$ over $K_{\Gamma}$ such that $Y_{\Gamma}$ is birationally equivalent to $X_{\Gamma}$. As sets, $Y_{\Gamma}(C)$ and $X_{\Gamma}(C)$ differ by finitely many points, which are in one-to-one correspondence with the points of the set $\Gamma\backslash \mathbb{P}^1(K)$, where $\gamma = \left(\begin{smallmatrix} a&b\\ c&d \end{smallmatrix}\right) \in \Gamma$ acts on $(x_1:x_2) \in \mathbb{P}^1(K)$ by 
\begin{equation*}
\gamma \cdot (x_1: x_2) = (a x_1 +b x_2: cx_1 + d x_2).
\end{equation*}
These points are called the cusps of $\Gamma$.

For  $\gamma = \left(\begin{smallmatrix} a&b\\ c&d \end{smallmatrix}\right) \in \Gamma$, we have
\begin{equation*}
\frac{d(\gamma z)}{dz}=\det \gamma \,  (cz+d)^{-2},
\end{equation*}
so that for $f$ a modular form for $\Gamma$ of weight $2$ and type $1$, the differential form $f(z)dz$ is $\Gamma$-invariant. A short computation, presented in \cite{gekelerjacobian}, shows that it descends to a holomorphic differential form on $X_{\Gamma}$ if $f$ is a double cusp form. Since GAGA theorems hold for rigid analytic curves \cite{kiehl1} \cite{kiehl2}, we have the following theorem:

\begin{theorem}[Goss \cite{gosseisenstein}, Gekeler-Reversat \cite{gekelerjacobian}]
The map $f \mapsto f(z)dz$ identifies the space of double cusp forms of weight $2$ and type $1$ for $\Gamma$ to the space of regular differential forms on $X_{\Gamma}$.
\end{theorem}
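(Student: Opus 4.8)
The plan is to show that the $C$-linear map $\Theta\colon f\mapsto f(z)\,dz$ is an isomorphism from $M^{2}_{2,1}(\Gamma)$ onto the space of regular differentials on $X_{\Gamma}$. Linearity is immediate, and $\Theta$ is injective because $dz$ is a nonzero differential on the irreducible rigid space $\Omega$, so $f(z)\,dz=0$ forces $f=0$. Combining the displayed identity $d(\gamma z)/dz=\det\gamma\,(cz+d)^{-2}$ with the weight-$2$, type-$1$ automorphy $f(\gamma z)=(\det\gamma)^{-1}(cz+d)^{2}f(z)$ shows that $f(z)\,dz$ is $\Gamma$-invariant, hence descends to a rigid analytic differential on $\Gamma\backslash\Omega\cong Y_{\Gamma}^{\mathrm{an}}$; since $f$ is holomorphic on $\Omega$ and the quotient map $p\colon\Omega\to\Gamma\backslash\Omega$ is \'etale off the elliptic points, the descended differential is holomorphic on $Y_{\Gamma}^{\mathrm{an}}$ away from the images of those points. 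Everything then reduces to two local claims at the finitely many special points of $X_{\Gamma}$ -- the images of the elliptic points of $\Gamma$ and the cusps: (i) $\Theta(f)$ extends to a \emph{regular} differential on all of $X_{\Gamma}$ exactly when $f$ is a double cusp form, and (ii) every regular differential on $X_{\Gamma}$ is of this form.

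For (i), at an elliptic point one picks a local coordinate $t$ on $\Omega$ centered there in which the stabilizer (cyclic of order $e$, with $e$ prime to $p$ since the elliptic stabilizers in $\Gl_2(A)$ have order $q+1$) acts by $t\mapsto\zeta t$ for a primitive $e$-th root of unity $\zeta$; invariance of $f(z)\,dz$ then forces it to have the form $t^{e-1}\cdot(\text{a power series in }t^{e})\,dt$, which equals $e^{-1}\cdot(\text{a power series in }s)\,ds$ in the downstairs coordinate $s=t^{e}$ -- regular, with no forced zero. This is the ``short computation'' of \cite{gekelerjacobian}, and it is precisely here that the weight being $2$ (and the type $1$, which produces the $(\det\gamma)^{-1}$ factor) is used. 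At a cusp, after conjugating by an element of $\Gl_2(K)$ we may take the cusp to be $\infty$, where $u=\tilde{\pi}/e_{A}(z)$ and $e_{A}'\equiv 1$ give $dz=-\tilde{\pi}\,du/u^{2}$; translating $u$ into the local parameter at $\infty$ on $X_{\Gamma}$ (which differs from $u$ only by the cusp width and by the $\mathbb{F}_{q}^{\times}$-scaling built into the type) shows that $\Theta(f)$ is regular at the cusp if and only if $f$ vanishes there to order at least $2$. Summing over all cusps, $\Theta$ maps $M^{2}_{2,1}(\Gamma)$ into the holomorphic differentials on $X_{\Gamma}^{\mathrm{an}}$, and by rigid analytic GAGA \cite{kiehl1,kiehl2} (applicable since $X_{\Gamma}$ is proper) these are exactly the analytifications of the regular algebraic differentials on $X_{\Gamma}$.

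For (ii), given a regular differential $\omega$ on $X_{\Gamma}$, pass to its analytification, restrict to the open subspace $Y_{\Gamma}^{\mathrm{an}}=\Gamma\backslash\Omega$, and pull back along $p$: since $\omega$ is holomorphic everywhere on $X_{\Gamma}$ and the pullback of a holomorphic differential under a holomorphic map is holomorphic (even at the ramification points of $p$), $p^{*}\omega$ is a holomorphic differential on $\Omega\subset\mathbb{A}^{1}_{C}$, hence $p^{*}\omega=\tilde{f}(z)\,dz$ for a unique rigid analytic $\tilde{f}$ on $\Omega$. Running the transformation computation of the first paragraph backwards, $\Gamma$-invariance of $\omega$ (hence of $p^{*}\omega$) forces $\tilde{f}(\gamma z)=(\det\gamma)^{-1}(cz+d)^{2}\tilde{f}(z)$ for all $\gamma\in\Gamma$, i.e.\ $\tilde{f}$ obeys the weight-$2$, type-$1$ law; and the local analysis of (i), read at each cusp in reverse, shows that regularity of $\omega$ at the cusps forces $\tilde{f}$ to be analytic at every cusp of $\Gamma$ with a zero of order at least $2$. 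Thus $\tilde{f}\in M^{2}_{2,1}(\Gamma)$ and $\Theta(\tilde{f})=\omega$, so together with injectivity $\Theta$ is an isomorphism. (Once (i) and injectivity are available one could instead deduce surjectivity from Riemann--Roch, but only after knowing $\dim_{C}M^{2}_{2,1}(\Gamma)=g(X_{\Gamma})$, which in the Drinfeld setting is itself usually proved via this identification; hence the direct pullback argument is cleaner.)

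The step I expect to be the main obstacle is the local analysis at the special points: pinning down the local parameter at each cusp of $X_{\Gamma}$ and at each elliptic point, and verifying that $dz$ behaves against it exactly like a differential with a double pole at a cusp, respectively with neither a pole nor a forced zero at an elliptic point. This is the computation the excerpt attributes to \cite{gekelerjacobian} and \cite{gosseisenstein}; the remaining ingredients -- linearity, injectivity, the $\Gamma$-invariance of $f(z)\,dz$, the GAGA comparison, and the pullback in the surjectivity argument -- are formal.
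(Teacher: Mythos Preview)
Your proposal is correct and follows precisely the approach the paper alludes to: the paper does not prove this theorem but merely cites it, noting that the $\Gamma$-invariance of $f(z)\,dz$ follows from the displayed derivative formula, that the ``short computation'' in \cite{gekelerjacobian} handles the local analysis at elliptic points and cusps, and that rigid GAGA \cite{kiehl1,kiehl2} furnishes the passage to algebraic differentials. Your sketch fills in exactly these details, including the surjectivity via pullback, and correctly identifies the cusp analysis (where the type-$1$ constraint forces the $u$-order to jump from $\geq 2$ to $\geq q$, matching the pole order $q/(q-1)$ of $dz$) as the substantive step.
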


From this theorem it follows that the dimension of the space of double cusp forms of weight $2$ and type $1$ for $\Gamma$ is $g_{\Gamma}$, where $g_{\Gamma}$ is the genus of the curve $X_{\Gamma}$. Furthermore, it follows by a standard argument that all spaces of Drinfeld modular forms of a fixed weight and type for a congruence group $\Gamma$ are finite-dimensional.

We will be interested in one family of congruence subgroups, and the Drinfeld modular curves attached to these groups. Recall that $\pi(T)$ is a monic prime polynomial in $A$ of degree $d$ generating the ideal $\mathfrak{p}$. Then we may define the congruence subgroups
\begin{equation*}
\Gamma=\Gamma_0(\mathfrak{p})\defi \left\{
\begin{pmatrix}
a & b \\
c & d
\end{pmatrix}
\in \Gl_2(A) \mid c \equiv 0 \pmod{\mathfrak{p}} \right\}.
\end{equation*}
In this case, $\# \det \Gamma_0(\mathfrak{p})= q-1$. From now on, we will denote the affine curve $Y_{\Gamma_0(\mathfrak{p})}$ by $Y_0(\mathfrak{p})$ and the projective curve $X_{\Gamma_0(\mathfrak{p})}$ by $X_0(\mathfrak{p})$ to coincide with classical notation. Both $Y_0(\mathfrak{p})$ and $X_0(\mathfrak{p})$ can be defined over $K$, but we will most often think of them as curves over $C$.

As described in \cite{gekelerjacobian}, every congruence subgroup corresponds to a certain moduli problem for Drinfeld modules of rank $2$. The problem attached to $\Gamma_0(\mathfrak{p})$ classifies Drinfeld modules of rank $2$ with a distinguished finite flat subgroup-scheme which is cyclic, locally free of rank $q^d$ and contained in the $\mathfrak{p}$-torsion. We write  $M_0(\mathfrak{p})$ for the coarse moduli scheme associated to this problem.

From Drinfeld's work on ``generalized Drinfeld modules," we may deduce the existence of a compactification $\overline{M}_0(\mathfrak{p})$ of $M_0(\mathfrak{p})$ over $\Spec A$. We have that $X_0(\mathfrak{p})$ as a curve over $K$ is $\overline{M}_0(\mathfrak{p})\times_A K$. From \cite{gekelerhecke} and \cite{drinfeld}, we know that $\overline{M}_0(\mathfrak{p})$ has the following properties:


\begin{theorem}\label{modulischeme}
\begin{itemize}
\item $\overline{M}_0(\mathfrak{p}) \to \Spec A$ is proper, normal, flat, and irreducible, of relative dimension $1$.
\item $\overline{M}_0(\mathfrak{p}) \to \Spec A$ is smooth away from $\mathfrak{p}$.
\item If $d$ is even, $\overline{M}_0(\mathfrak{p})$ is regular. If $d$ is odd, $\overline{M}_0(\mathfrak{p})$ has a singularity on the fiber above $\mathfrak{p}$ at the supersingular $j$-invariant $j=0$, and is otherwise regular. The singularity is of type $A_q$.
\end{itemize}
\end{theorem}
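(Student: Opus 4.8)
The plan is to follow the arguments of Drinfeld \cite{drinfeld} and Gekeler \cite{gekelerhecke}, establishing the three items in turn; the third is by far the most substantial. For the coarse properties one realises $\overline{M}_0(\mathfrak{p})$ as Drinfeld's compactification of $M_0(\mathfrak{p})$ by generalised Drinfeld modules of rank $\leq 2$ --- equivalently, as the normalisation of the projective $j$-line $\mathbb{P}^1_A$ over $\Spec A$ in the function field $K(Y_0(\mathfrak{p}))$ --- so that normality is built into the construction, and properness is the valuative criterion: a one-parameter family of (generalised) Drinfeld modules over a punctured trait extends over the whole trait. Since $\Gamma_0(\mathfrak{p})\backslash\Omega$ is connected, $X_0(\mathfrak{p})_K$ is geometrically connected, so $\overline{M}_0(\mathfrak{p})$ is connected, hence integral, in particular irreducible; being a normal integral scheme dominating the Dedekind scheme $\Spec A$, it is automatically $A$-flat; and its generic fibre $X_0(\mathfrak{p})_K$ is a smooth curve, which pins the relative dimension down to $1$.

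For smoothness over $\Spec A[1/\mathfrak{p}]$ I would use deformation theory: rank-$2$ Drinfeld modules have unobstructed, one-dimensional deformations, and away from $\mathfrak{p}$ the $\mathfrak{p}$-torsion group scheme is étale, so imposing a $\Gamma_0(\mathfrak{p})$-structure defines an étale cover; hence the moduli problem is smooth of relative dimension $1$ over $\Spec A[1/\mathfrak{p}]$. On the coarse scheme the only delicate locus is the elliptic point $j=0$, where there are extra automorphisms of order dividing $q^2-1$; but $q^2-1$ is prime to $p$, so the induced action on the one-dimensional deformation parameter is a tame cyclic scaling, whose quotient remains smooth. The cusps are handled by the Tate--Drinfeld uniformisation, which is smooth over all of $\Spec A$. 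This gives the second item.

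The heart of the proof --- and the step I expect to be the main obstacle --- is the local structure of $\overline{M}_0(\mathfrak{p})$ along the fibre over $\mathfrak{p}$, for which I would reproduce the Deligne--Rapoport-type analysis of \cite{gekelerhecke}, \cite{drinfeld}. Its backbone is: (i) the special fibre $\overline{M}_0(\mathfrak{p})\otimes_A\mathbb{F}_{\mathfrak{p}}$ is the union of two copies of the $j$-line $X_{\Gl_2(A)}\otimes\mathbb{F}_{\mathfrak{p}}$ --- the Frobenius and Verschiebung components --- meeting exactly at the $g_{\mathfrak{p}}+1$ supersingular points; and (ii) at a supersingular point whose underlying Drinfeld module has automorphism group only the scalars $\mathbb{F}_q^{\times}$, the Lubin--Tate/Drinfeld-level deformation theory of the height-two formal module shows that the completed local ring is regular, the two branches crossing transversally --- $xy=\pi$ in suitable formal coordinates over the complete local ring $\widehat{A}_{\mathfrak{p}}$ of $\Spec A$ at $\mathfrak{p}$. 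If $d$ is even, then $j=0$ is ordinary at $\mathfrak{p}$ (equivalently $\gamma(q^d-1,0)=0$, so $x\nmid S_{\mathfrak{p}}(x)$ modulo $\mathfrak{p}$), every supersingular point carries the regular model above, and $\overline{M}_0(\mathfrak{p})$ is regular.

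If $d$ is odd, then $j=0$ is supersingular ($\gamma(q^d-1,0)=1$, so $x\mid S_{\mathfrak{p}}(x)$; concretely, $j=0$ has complex multiplication by the constant quadratic extension, in which $\mathfrak{p}$ is inert precisely when $\deg\pi$ is odd), and the $j=0$ Drinfeld module has automorphism group $\mathbb{F}_{q^2}^{\times}$; the non-scalar part $\mathbb{F}_{q^2}^{\times}/\mathbb{F}_q^{\times}$, cyclic of order $q+1$ and prime to $p$, acts on the regular model $\widehat{A}_{\mathfrak{p}}[[x,y]]/(xy-\pi)$ by $(x,y)\mapsto(\zeta x,\zeta^{-1}y)$ (the two branches being interchanged by duality), so that passing to the invariants $X=x^{q+1}$, $Y=y^{q+1}$ gives
\begin{equation*}
\widehat{\mathcal{O}}_{\overline{M}_0(\mathfrak{p}),\,j=0}\;\cong\;\widehat{A}_{\mathfrak{p}}[[X,Y]]/(XY-\pi^{q+1})\;\cong\;\mathbb{F}_{\mathfrak{p}}[[X,Y,Z]]/(XY-Z^{q+1}),
\end{equation*}
the cyclic quotient singularity $\tfrac{1}{q+1}(1,-1)$, i.e.\ a rational double point of type $A_q$, while the remaining supersingular points still carry the regular model $xy=\pi$; this is the third item. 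The genuinely technical point throughout is (ii): correctly identifying the local deformation ring of a supersingular Drinfeld module carrying a $\Gamma_0(\mathfrak{p})$-structure via the theory of Drinfeld level structures on formal modules of height two, together with the bookkeeping of how the automorphisms act on the two branches --- once this local model is in hand, everything else is formal.
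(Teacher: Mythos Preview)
The paper does not prove this theorem at all: it is stated as a known result, prefaced by ``From \cite{gekelerhecke} and \cite{drinfeld}, we know that $\overline{M}_0(\mathfrak{p})$ has the following properties,'' with only a one-paragraph gloss afterward explaining that Gekeler shows the special fibre consists of two copies of $X_0(1)$ crossing transversally at the supersingular points and interchanged by $W_{\mathfrak{p}}$. Your proposal is therefore not comparable to a proof in the paper --- there is none --- but rather a reconstruction of the arguments in the cited references.

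As such a reconstruction, your sketch is broadly faithful to the Drinfeld--Gekeler line of argument and correctly identifies the key ingredients: the normalisation-in-the-function-field description for the coarse properties, \'etaleness of $\mathfrak{p}$-torsion away from $\mathfrak{p}$ for smoothness, the two-component picture of the special fibre, the local model $xy=\pi$ at supersingular points with only scalar automorphisms, and the cyclic quotient $\tfrac{1}{q+1}(1,-1)$ at $j=0$ when $d$ is odd yielding the $A_q$ singularity. The one place where your outline is thinner than the actual work in \cite{gekelerhecke} is precisely where you flag it yourself: the identification of the completed local ring at a supersingular point via Drinfeld level structures on height-two formal modules is the genuinely technical step, and your proposal states the conclusion rather than the mechanism. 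For the purposes of this paper, which only \emph{uses} the theorem, that is entirely appropriate.
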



The last part of the theorem requires a careful study of the moduli problem ``in characteristic $\mathfrak{p}$.'' To obtain it, Gekeler \cite{gekelerhecke} shows that the reduction of $X_0(\mathfrak{p})$ modulo $\mathfrak{p}$ is given by two copies of $X_0(1)$ intersecting transversally at the supersingular points and interchanged by the Fricke involution $W_{\mathfrak{p}}$. The Fricke involution can be defined as follows: if $\phi$ is a Drinfeld module and $H$ is a $\Gamma_0(\mathfrak{p})$-level structure, so that $(\phi, H)$ is a point of $M_0(\mathfrak{p})$, then $W_{\mathfrak{p}}(\phi, H) = (\phi / H, \phi[\mathfrak{p}]/ H)$. 

\begin{remark}\label{xnotintegrality}
From Theorem \ref{modulischeme} above, we have that $X_0(\mathfrak{p})$ is defined over $K$ with function field $K(j, j_{\mathfrak{p}})$. In fact, because the moduli problem associated to $\Gamma_0(\mathfrak{p})$ is defined over $A$, the space of holomorphic differentials on $X_0(\mathfrak{p})$ has a basis that is defined over $A$. Therefore, the space of Drinfeld double cusp forms of weight $2$ and type $1$ for $\Gamma_0(\mathfrak{p})$ has a basis of forms with integral coefficients. It also follows from such considerations that Drinfeld modular forms on $\Gamma_0(\mathfrak{p})$ with rational $u$-series coefficients have bounded denominators.
\end{remark}

From its action on pairs $(\phi, H)$, we can also see that the Fricke involution $W_{\mathfrak{p}}$ is $K$-rational. We note here that the analytic avatar of $W_{\mathfrak{p}}$ is the action of the matrix $\left(\begin{smallmatrix} 0&-1\\ \pi&0 \end{smallmatrix}\right)$ on $\Omega$.

Since $X_0(\mathfrak{p})$ is smooth, its arithmetic and geometric genera are the same and do not depend on the field over which we consider the curve. We denote the genus of $X_0(\mathfrak{p})$ by $g_{\mathfrak{p}}$, and it is given by
\begin{equation*}
g_{\mathfrak{p}} =
\begin{cases}
\frac{q(q^{d-1}-1)}{q^2-1} & \qquad \mbox{if $d$ is odd,}\\
\frac{q^2(q^{d-2}-1)}{q^2-1} & \qquad \mbox{if $d$ is even.}
\end{cases}
\end{equation*}
(As promised, this is the same $g_{\mathfrak{p}}$ that appears in equation (\ref{genusformula}).) This fact can be obtained either by relating $g_{\mathfrak{p}}$ to $h_1(\Gamma_0(\mathfrak{p})) \backslash \mathcal{T})$ as in \cite{gekeler17}, or by working directly on the Drinfeld modular curve as in \cite{gekeler11}.

From \cite{gekeler11}, we also note that representatives for the two distinct equivalence classes of $\Gamma_0(\mathfrak{p})\backslash \mathbb{P}^1(K)$ are $(0:1)$ and $(1:0)$, so that $X_0(\mathfrak{p})$ has two cusps, denoted $0$ and $\infty$, respectively. Both of these cusps are $K$-rational points of $X_0(\mathfrak{p})$. From the same source, we have that $X_0(\mathfrak{p})(C)$ has no elliptic point when $d$ is odd, and two elliptic points when $d$ is even. When $d$ is even, both elliptic points have stabilizer of order $q+1$ in $\widetilde{\Gamma_0}(\mathfrak{p})=\Gamma_0(\mathfrak{p})/\left(\Gamma_0(\mathfrak{p}) \cap Z(\Gl_2(A))\right)$.

\subsection{Expansions at the cusps}\label{infinity}

Some care is needed in discussing the behavior of Drinfeld modular forms at the cusps, so we delve into this topic now. We focus on the groups $\Gl_2(A)$ and $\Gamma_0(\mathfrak{p})$ as this is all we will need here, and leave the general case to \cite{gekelerjacobian} or \cite{gekelermodularcurve}. 

Let us first consider the case of $\Gl_2(A)$. The set $\Gl_2(A) \backslash \mathbb{P}^1(K)$ consists of a single element, and we choose $(1:0)$ as the representative of this element. The stabilizer $\Gamma_{\infty}$ of $(1:0)$ in $\Gl_2(A)$ is the set of all upper-triangular matrices. This set contains a maximal subgroup $\Gamma_{\infty}^{un}$:
\begin{equation*}
\Gamma_{\infty}^{un} = \left\{
\begin{pmatrix}
1& a \\
0 & 1
\end{pmatrix}
: a \in A \right\},
\end{equation*}
and also cyclic transformations $\left(\begin{smallmatrix} a & 0 \\ 0 & d \end{smallmatrix}\right)$ for $a, d \in \mathbb{F}_q^{\times}$. The image of this group of cyclic transformations in $\Pgl_2(A)$ has size $q-1$, the size of $\mathbb{F}_q^{\times}$.

Recall the function $u$ defined in equation (\ref{u}). Now writing
\begin{equation*}
\Omega_c= \{z\in \Omega : \operatorname{inf}_{x \in K_{\infty}} |z-x| \geq c \},
\end{equation*} 
we have that $u$ identifies $\Gamma_{\infty}^{un}\backslash \Omega_c$ with a pointed ball $B_r - \{0\}$ of radius $r$ for some small $r$ \cite{gekelerjacobian}. It can be shown that there is a constant $c_0$ such that for $c \geq c_0$ and $\gamma \in \Gl_2(A)$, $\Omega_c \cap \gamma(\Omega_c) \neq \emptyset$ implies that $\gamma \in \Gamma_{\infty}$. Thus for such a $c$,
\begin{alignat*}{2}
B_{r^{q-1}} - \{0\} &\cong \Gamma_{\infty} \backslash \Omega_c && \hookrightarrow \Gl_2(A) \backslash \Omega \\
u(z)^{q-1} &\gets z & &\to z
\end{alignat*}
is an open immersion of analytic spaces. Thus $u(z)^{q-1}$ is a uniformizer at the cusp $\infty$ for $\Gl_2(A) \backslash \Omega$.

The subtlety involved in defining the $u$-series expansion of a Drinfeld modular form is that we allow them to have non-trivial type $l$, and thus they are not invariant under the full $\Gamma_{\infty}$, but rather only under $\Gamma_{\infty}^{un}$. This is why in general a Drinfeld modular form of non-trivial type will have a $u$-series expansion rather than a $u^{q-1}$-series expansion.

There is also a second subtlety that comes into play. For a general congruence subgroup $\Gamma$, to discuss the behavior of a function $f$ at a cusp $(a:b) \in \Gamma \backslash \mathbb{P}^1(K)$, one first fixes an element $\gamma \in \Gl_2(K)$ such that $\gamma \cdot (1:0) = (a:b)$. Then the holomorphy properties and order of vanishing of $f$ at the cusp corresponding to $(a:b)$ are the properties of $f \circ \gamma$ at $\infty$, and do not depend on the choice of $(a:b)$ in its equivalence class modulo $\Gamma$ and on the choice of $\gamma$ sending $(1:0)$ to $(a:b)$. However, for $t$ a parameter at $\infty$ for the group $\Gamma$, one might wish to define the $t$-series expansion of $f$ at the cusp corresponding to $(a:b)$ as that of $f\circ \gamma$ at $\infty$. This is not well-defined, as the coefficients of the expansion will depend on the choice of $(a:b)$ and $\gamma$. 

To remove any ambiguity, in the case of $\Gl_2(A)$ we once and for all declare that the expansion of $f$ at $\infty$ is its $u$-series expansion, with $u$ as defined in equation (\ref{u}).

We now consider $\Gamma_0(\mathfrak{p})$. The cusp in the $\Gamma_0(\mathfrak{p})$-equivalence class of $(1:0)$, which we denote by $\infty$, has stabilizer $\Gamma_{\infty}$ in $\Gamma_0(\mathfrak{p})$, where $\Gamma_{\infty}$ is again the set of all upper-triangular matrices in $\Gl_2(A)$. Because of this, the same argument as above shows that $u^{q-1}$ is a parameter at $\infty$, and that modular forms for $\Gamma_0(\mathfrak{p})$ have a $u$-series expansion at $\infty$. As in the case of $\Gl_2(A)$, we fix once and for all that the expansion of $f$ at $\infty$ is its $u$-series expansion.

We now consider the other cusp of $X_0(\mathfrak{p})$, which we will denote by $0$. To fix a well-defined choice of $u$-series expansion at $0$, we fix $(0:1)$ as the representative of the other equivalence class, and the matrix
\begin{equation*}
W_{\mathfrak{p}}=
\begin{pmatrix}
0 & -1 \\
\pi & 0
\end{pmatrix}
\end{equation*}
as the matrix sending $(1:0)$ to $(0:1)$. Thus the $u$-series expansion of a Drinfeld modular form of weight $k$ and type $l$ at the cusp $0$ is defined to be that of the form
\begin{equation*}
f|_{k,l} [W_{\mathfrak{p}}] = \pi^{k/2}(\pi z)^{-k}f\left(\frac{-1}{\pi z}\right)
\end{equation*}
at $\infty$.

In any case, for a Drinfeld modular form with $u$-series expansion $\sum_{i=0}^{\infty}a_i u(z)^i$ at a cusp $c$, we will write $\order_c(f)$ for the least $i\geq 0$ such that $a_i \neq 0$, and call this the order of vanishing of $f$ at $c$.

\section{Hyperderivatives and quasimodular forms}\label{quasimodular}

In this section we present the theory necessary to study the action of differential operators on the algebra of Drinfeld modular forms. These operators will not preserve modularity, which naturally leads us to consider a larger set of rigid analytic functions on $\Omega$, the Drinfeld quasimodular forms. Throughout, we will use ``analytic'' to mean ``rigid analytic." We will say that an analytic function $f$ on $\Omega$ is ``analytic at $\infty$'' to mean that there are constants $a_i \in C$, $i \in \mathbb{Z}_{\geq 0}$ such that 
\begin{equation*}
f(z)=\sum_{i=0}^{\infty}a_i u(z)^i
\end{equation*}
for $z$ such that $\operatorname{inf}_{x \in K_{\infty}} |z-x| $ is large. 

\subsection{Drinfeld quasimodular forms}

\begin{definition}
An analytic function $f \colon \Omega \rightarrow C$ is called a \emph{Drinfeld quasimodular form of weight $k$, type $l$, and depth $m$ for $\Gl_2(A)$}, where $k \geq 0$ and $m \geq 0$  are integers and $l$ is a class in $\mathbb{Z}/(q-1)$, if there exist analytic functions $f_1$, $f_2, \ldots, f_m$ on $\Omega$ which are $A$-periodic and analytic at infinity such that for $\gamma = \left(\begin{smallmatrix} a&b\\ c&d \end{smallmatrix}\right) \in \Gl_2(A)$, we have
\begin{equation*}
f(\gamma z)=(\det \gamma)^{-l}(cz+d)^k \sum_{j=0}^{m} f_j(z) \left( \frac{c}{cz+d}\right)^j.
\end{equation*}
\end{definition}

For a given quasimodular form $f \neq 0$, the weight, type and polynomial $\sum_{j=0}^{m} f_j(z)X^j$ are uniquely determined by $f$ as shown in \cite{bosserp}. Furthermore, as can be seen by choosing $\gamma$ to be the identity matrix, we necessarily have $f=f_0$. Finally, every modular form is a quasimodular form of depth $0$, and vice-versa.

An important example of a Drinfeld quasimodular form is the function $E$ introduced in \cite{gekeler}:
\begin{equation*}
E \defi \frac{1}{\tilde{\pi}}\sum_{\substack{a \in \mathbb{F}_q[T]\\ a \text{ monic}}}\left(\sum_{b \in \mathbb{F}_q[T]} \frac{a}{az+b} \right),
\end{equation*}
which can be shown to be of weight $2$, type $1$ and depth $1$. Its importance is reflected in the fact that the graded $C$-algebra of Drinfeld quasimodular forms of all weights, types and depths is the polynomial ring $C[g,h,E]$, where each form corresponds to a unique isobaric polynomial. 

For a more in-depth discussion of Drinfeld quasimodular forms, we refer the interested reader to the work of Bosser and Pellarin \cite{bosserp} and \cite{bosserp2}.

\subsection{Higher derivatives}\label{analytic}
In \cite{uchinosatoh}, Uchino and Satoh consider the action of the Hasse derivatives on analytic functions on $\Omega$. We present here the results we need from their paper without proof.

We will use the fact that $C$ is a complete field with a non-Archimedean dense valuation (which we recall is the unique extension of $v_{\infty}(x)=-\operatorname{deg}(x)$ from $K$ to $C$) and that $\Omega$ is an open set. We will work in this section with analytic functions on $\Omega$ and denote the space of these functions by $\An(\Omega)$. For $f \in \An(\Omega)$ such that $f=\sum_{i=0}^{\infty} c_{i,w}(z-w)^i$ in a neighborhood of $w\in \Omega$, we define the $n^{th}$ hyperderivative of $f$ at $w$ to be
\begin{equation}
\mathfrak{D}_n(f)(w)=c_{n,w}.
\end{equation}
As remarked above, this is simply the Hasse derivative.

For our purposes, it will be important that our differential operator preserves $K$-rationality of the $u$-series coefficients, which $\mathfrak{D}_n$ does not. However, the operator
\begin{equation}
D_n \defi \frac{1}{(-\tilde{\pi})^n}\mathfrak{D}_n
\end{equation}
does \cite{bosserp}, and so we will use this normalized operator.

\begin{remark}
The operator $-D_1$ was also studied by Gekeler in \cite{gekeler}, where it was denoted by $\Theta$, in analogy with Ramanujan's $\Theta$-operator in the classical setting. This explains the discrepancy in sign between this work and the cited paper in our statement of Proposition \ref{partial} below.
\end{remark}

We have the following facts:

\begin{proposition}[Uchino-Satoh \cite{uchinosatoh}]\label{uchinosatoh}
For $f\in \An(\Omega)$ and $w \in \Omega$ such that $f=\sum_{i=0}^{\infty} c_{i,w}(z-w)^i$ near $w$, we have:
\begin{enumerate}
\item \label{Dnexpansion} Formally, in a neighborhood of $w$,
\begin{equation*}
D_nf (z)=\frac{1}{(-\tilde{\pi})^n}\sum_{i=0}^{\infty} \binom{i}{n}c_{i,w}(z-w)^{i-n}
\end{equation*}
and this has the same radius of convergence as $\sum_{i=0}^{\infty} c_{i,w}(z-w)^i$.
\item In fact, $D_n f$ is analytic on $\Omega$.
\item The system of derivatives $\{D_n\}$ is a \emph{higher derivation}; in other words it satisfies:
\begin{enumerate}
\item $D_0 f=f$,
\item $D_n$ is $C$-linear,
\item for $f$ and $g$ in $\An(\Omega)$, $D_n(fg)=\sum_{i=0}^{n}D_i f D_{n-i} g$.
\end{enumerate}
\item This higher derivation is \emph{iterative}: for all integers $i\geq 0$ and $j\geq 0$, we have:
\begin{equation*}
D_i \circ D_j = D_j \circ D_i = \binom{i+j}{i}D_{i+j}.
\end{equation*}
\item This higher derivation has a \emph{chain rule property}: For each $n\geq 1$ and each $1\leq i \leq n$, there exist maps $F_{n,i}$ from $\An(\Omega)^{n+1-i}$ to $\An(\Omega)$ such that:
\begin{enumerate}
\item for $f$ and $g$ in $\An(\Omega)$ such that the composition $f \circ g$ is defined, we have
\begin{equation*}
D_n(f \circ g)=\sum_{i=1}^{n}F_{n,i}(D_1g, \ldots, D_{n+1-i} g)(D_i f) \circ g,
\end{equation*}
\item and if $n\geq 2$, then $F_{n,1}$ is a $C$-linear map.
\end{enumerate}
\end{enumerate}
\end{proposition}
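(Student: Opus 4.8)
The plan is to reduce every clause to an elementary statement about the formal Hasse derivatives $\mathfrak{D}_n$ on convergent power series over the complete non-Archimedean field $C$; the normalization $D_n=(-\tilde{\pi})^{-n}\mathfrak{D}_n$ contributes only a power of $\tilde{\pi}$, which I would track separately at the end of each step. The single input about $\mathfrak{D}_n$ that I would use is its defining property: near any $w\in\Omega$ one has the Taylor expansion $f(z)=\sum_{n\geq0}(\mathfrak{D}_nf)(w)(z-w)^n$, so that $\mathfrak{D}_nf(w)$ is the $n$th Taylor coefficient of $f$ at $w$. I would also use repeatedly that over $C$ a series converging at a point converges there unconditionally, so that double series may be freely rearranged, and that $|\binom{m}{n}|\leq1$ because binomial coefficients are rational integers.

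I would first prove (1). Fix $w$ with $f=\sum_i c_{i,w}(z-w)^i$ on a ball $B$ about $w$. For $z\in B$, expanding $(z'-w)^i=((z'-z)+(z-w))^i$ by the binomial theorem and rearranging the resulting double series gives $f(z')=\sum_n\bigl(\sum_{i\geq n}\binom{i}{n}c_{i,w}(z-w)^{i-n}\bigr)(z'-z)^n$; reading off the coefficient of $(z'-z)^n$ yields $\mathfrak{D}_nf(z)=\sum_{i\geq n}\binom{i}{n}c_{i,w}(z-w)^{i-n}$, and dividing by $(-\tilde{\pi})^n$ is the formula in (1)(a). Since $|\binom{i}{n}|\leq1$ and the exponent is merely shifted by $n$, this series converges wherever the original does, which gives (1)(b): $D_nf$ is given near every point of $\Omega$ by a convergent power series, so it lies in $\An(\Omega)$ once one checks that these local expansions are compatible with the admissible affinoid cover defining the rigid structure on $\Omega$. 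The identities $D_0f=f$ and the $C$-linearity of $D_n$ in (2)(a),(b) are immediate from the defining property.

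Parts (2)(c) and (3) become identities among Taylor coefficients at a single point $w$. Multiplying $f=\sum_i\mathfrak{D}_if(w)(z-w)^i$ by $g=\sum_j\mathfrak{D}_jg(w)(z-w)^j$ and collecting the coefficient of $(z-w)^n$ (a Cauchy product, legitimate for convergent power series over $C$) gives $\mathfrak{D}_n(fg)(w)=\sum_{i+j=n}\mathfrak{D}_if(w)\,\mathfrak{D}_jg(w)$, and restoring the matching power of $\tilde{\pi}$ is the Leibniz rule (2)(c). For iterativity I would apply (1)(a) twice: near $w$ the function $\mathfrak{D}_jf$ has $(z-w)^k$-coefficient $\binom{k+j}{j}c_{k+j,w}$, so $\mathfrak{D}_i(\mathfrak{D}_jf)(w)=\binom{i+j}{j}c_{i+j,w}=\binom{i+j}{j}\mathfrak{D}_{i+j}f(w)$; the symmetry $\binom{i+j}{i}=\binom{i+j}{j}$ gives both the commutativity and the stated coefficient, and restoring the $\tilde{\pi}$-powers gives (3).

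Finally, (4) is Fa\`a di Bruno's formula in this higher-derivation setting. Near $w$ write $g(w+t)=g(w)+G(t)$ with $G(t)=\sum_{k\geq1}\mathfrak{D}_kg(w)\,t^k$, a convergent power series with no constant term; substituting it into $f(g(w)+s)=\sum_{j\geq0}\mathfrak{D}_jf(g(w))\,s^j$ is licit over $C$ and gives $(f\circ g)(w+t)=\sum_{j\geq0}\mathfrak{D}_jf(g(w))\,G(t)^j$, convergent in $t$. For $n\geq1$ only $1\leq j\leq n$ contribute to the coefficient of $t^n$, and the $t^n$-coefficient of $G(t)^j$ is a universal polynomial, with integer multinomial coefficients, in $\mathfrak{D}_1g(w),\dots,\mathfrak{D}_{n+1-j}g(w)$; after absorbing the $\tilde{\pi}$-powers these are the maps $F_{n,i}$, which are analytic in their arguments by parts (1)--(2). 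When $j=1$ the $t^n$-coefficient of $G(t)$ is simply $\mathfrak{D}_ng(w)$, so $F_{n,1}$ equals $-\tilde{\pi}$ times the $C$-linear operator $D_n$, giving its claimed linearity. I expect the real difficulties to be analytic bookkeeping rather than any one computation: justifying the rearrangements of double series and the substitution of one convergent series into another (which needs the inner series to vanish at $0$ and the outer radius of convergence to dominate), and, in (1)(b), upgrading ``locally a convergent power series'' to honest rigid analyticity on all of $\Omega$ — exactly the places where the completeness and the non-Archimedean valuation of $C$, and the explicit admissible covers of $\Omega$, must be used with care.
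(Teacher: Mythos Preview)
The paper does not give its own proof of this proposition: it is stated as a result of Uchino and Satoh, with the explicit note ``We present here the results we need from their paper without proof.'' So there is nothing to compare your argument against.

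That said, your sketch is the standard verification of these facts about Hasse derivatives and is correct in outline. Two small remarks. First, your internal numbering has drifted: you fold the paper's items (1) and (2) into a single ``(1)(a)--(1)(b)'', so your items (2)--(4) are the paper's (3)--(5). Second, the only genuinely non-formal point is the paper's item (2), rigid analyticity of $D_nf$ on all of $\Omega$; you correctly flag that ``locally given by a convergent power series'' is not the same as membership in the structure sheaf of the rigid space, and that one must check compatibility with an admissible affinoid cover. This is precisely where the argument in the cited reference does the real work, and your sketch does not actually carry it out. Everything else (Leibniz via the Cauchy product, iterativity via the identity $\binom{i+j}{i}=\binom{i+j}{j}$, the chain rule via Fa\`a di Bruno, and the linearity of $F_{n,1}$ as projection onto the last argument up to a factor of $-\tilde{\pi}$) is routine and your justifications are sound.
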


In the case where $g$ is a linear fractional transformation, \cite[Lemma 3.3]{bosserp} gives the following more precise formula for the maps $F_{n,i}$ that appear in the chain rule property:
\begin{lemma}\label{prop:quasimodular}
Let $f \colon \Omega \to C$ be an analytic function. For all $n \geq 1$, $z \in \Omega$, and $\gamma = \left(\begin{smallmatrix} a&b\\ c&d \end{smallmatrix}\right) \in \Gl_2(A)$, we have
\begin{equation*}
D_n(f \circ \gamma)(z)=(-1)^n \left( \frac{c}{cz+d}\right)^n \sum_{i=1}^{n} (-1)^i \binom{n-1}{n-i} \left( \frac{c(cz+d)}{\det \gamma}\right)^{-i} \frac{1}{(-\tilde{\pi})^{n-i}}(D_i f)\left(\frac{az+b}{cz+d}\right).
\end{equation*}
\end{lemma}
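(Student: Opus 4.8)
The statement to prove is Lemma \ref{prop:quasimodular}, which gives an explicit formula for $D_n(f \circ \gamma)$ when $\gamma \in \Gl_2(A)$ acts as a linear fractional transformation.

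The plan is to proceed by induction on $n$, using the iterativity and product rule properties of the higher derivation $\{D_n\}$ recorded in Proposition \ref{uchinosatoh}. First I would establish the base case $n=1$ directly: writing $\gamma z = \frac{az+b}{cz+d}$, a short computation gives $D_1(\gamma z) = \frac{1}{-\tilde{\pi}} \cdot \frac{\det\gamma}{(cz+d)^2}$, so that by the chain rule (part (5) of Proposition \ref{uchinosatoh}, with $F_{1,1}$ simply multiplication by $D_1 g$) one gets $D_1(f\circ\gamma)(z) = (D_1 f)(\gamma z)\cdot \frac{1}{-\tilde\pi}\cdot\frac{\det\gamma}{(cz+d)^2}$, which matches the claimed formula for $n=1$ (the sum has a single term $i=1$, with $\binom{0}{0}=1$).

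For the inductive step, the cleanest route is to apply $D_1$ to both sides of the formula for $D_n(f\circ\gamma)$ and use iterativity $D_1 \circ D_n = (n+1) D_{n+1}$ to recover $D_{n+1}(f\circ\gamma)$, then the $C$-linearity and product rule of $D_1$ to differentiate the right-hand side. The right-hand side is a sum of terms of the shape $(\text{rational function of } z)\cdot (D_i f)(\gamma z)$; applying $D_1$ to each uses $D_1$ of the rational prefactor (which introduces another power of $\frac{c}{cz+d}$ and shifts the exponent of $\frac{c(cz+d)}{\det\gamma}$) plus the chain rule on $(D_i f)(\gamma z)$, which by iterativity again produces $(D_{i+1}f)(\gamma z)$ times $D_1(\gamma z)$. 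Collecting the two contributions to the coefficient of $(D_{i}f)(\gamma z)$ in the expression for $D_{n+1}(f\circ\gamma)$ and simplifying the numerical factors reduces to the Pascal-type identity $\binom{n-1}{n-i} + \binom{n-1}{n-i+1} = \binom{n}{n-i+1}$ (appropriately indexed), together with bookkeeping on the signs and the normalizing powers of $(-\tilde\pi)$; one also checks the new top term $i=n+1$ and the factor $(-1)^{n+1}$ appear with the right constants.

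The main obstacle I anticipate is purely the bookkeeping: keeping the three exponents — the overall power $n$ of $c/(cz+d)$, the power $-i$ of $c(cz+d)/\det\gamma$, and the power $n-i$ of $-\tilde\pi$ — consistent through the differentiation, and making sure the binomial recursion lines up after re-indexing the sum (the term coming from differentiating the prefactor of the $i$-th summand contributes to the $(i)$-th coefficient, while the chain-rule term from the $(i-1)$-st summand also contributes to the $i$-th coefficient). There is no conceptual difficulty — iterativity and the product rule do all the real work — but one must be careful that the characteristic-$p$ binomial coefficients $\binom{m}{n}$ appearing in the hyperderivative formalism are the ones used, and that Pascal's rule holds for them (it does, since $\binom{m}{n}$ here denotes the integer binomial coefficient viewed in $C$). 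Alternatively, one can cite that the formula matches \cite[Lemma 3.3]{bosserp}, but I would include the induction for completeness.
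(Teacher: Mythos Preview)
The paper does not supply a proof of this lemma; it simply quotes \cite[Lemma~3.3]{bosserp}. So there is nothing to compare against directly, and your proposal should be judged on its own merits.

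Your inductive strategy has a genuine gap in positive characteristic. The step ``apply $D_1$ to both sides and use $D_1\circ D_n=(n+1)D_{n+1}$ to recover $D_{n+1}(f\circ\gamma)$'' fails whenever $p\mid(n+1)$: in that case iterativity gives $D_1\circ D_n=0$, so applying $D_1$ to the known formula yields the identity $0=0$ and you cannot solve for $D_{n+1}(f\circ\gamma)$. The induction therefore stalls at every multiple of $p$. This is not just bookkeeping; it is precisely the phenomenon that forces one to use Hasse derivatives rather than iterated ordinary derivatives in characteristic $p$, and your claim that ``there is no conceptual difficulty'' overlooks it. The same obstruction reappears inside the right-hand side: differentiating $(D_i f)\circ\gamma$ via the $n=1$ chain rule produces $(i+1)(D_{i+1}f)(\gamma z)$, which again vanishes when $p\mid(i+1)$.

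Two standard repairs are available. One is to organise the induction along the base-$p$ digits of $n$, using $D_i\circ D_j=\binom{i+j}{i}D_{i+j}$ only for pairs $(i,j)$ whose base-$p$ addition has no carry, so that the binomial coefficient is a unit; this is in the spirit of the factorisation of $D_n$ displayed just after the lemma in the paper, but the passage across powers of $p$ still requires separate care. The cleaner route, and the one Bosser--Pellarin actually take, avoids induction entirely: write
\[
f(\gamma(z+h))=\sum_{m\ge 0}(\mathfrak{D}_m f)(\gamma z)\,\bigl(\gamma(z+h)-\gamma z\bigr)^m,
\]
compute $\gamma(z+h)-\gamma z=\dfrac{h\,\det\gamma}{(cz+d)(c(z+h)+d)}$, expand this as a geometric series in $h$, and read off the coefficient of $h^n$ directly.

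A smaller slip: in your base case, the chain rule for the \emph{normalised} operators is not $D_1(f\circ g)=(D_1 f)\circ g\cdot D_1 g$. Since $D_n=(-\tilde\pi)^{-n}\mathfrak{D}_n$, one finds $D_1(f\circ g)=(-\tilde\pi)\,(D_1 g)\,(D_1 f)\circ g$, so the correct base case is $D_1(f\circ\gamma)(z)=\dfrac{\det\gamma}{(cz+d)^2}(D_1 f)(\gamma z)$, without the extra factor of $(-\tilde\pi)^{-1}$ you wrote. This does agree with the $n=1$ case of the lemma.
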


We note here that since the $D_n$'s are iterative and using Lucas' theorem, we have that
\begin{equation}
D_n = \frac{1}{n_0! \ldots n_s!}D^{n_s}_{p^s} \circ \ldots \circ D^{n_1}_p \circ D^{n_0}_1,
\end{equation}
for $n=n_s p^s+ \cdots +n_1 p+ n_0$ the representation of $n$ in base $p$, with $0 \leq n_j \leq p-1$ for each $j$, and where the exponent of $n_j$ on $D_{p^j}$ denotes the $n_j$-fold composition.

As remarked at the beginning of this section, the $D_n$'s do not preserve modularity, but they do preserve quasimodularity, as shown in \cite{bosserp}. For our purposes we shall only need this weaker version of their more general theorem:

\begin{proposition}
Let $f$ be a modular form of weight $k$ and type $l$ for $\Gl_2(A)$. Then for all $n\geq 0$ $D_n f$ is $A$-periodic and analytic at $\infty$, and  for $\gamma = \left(\begin{smallmatrix} a&b\\ c&d \end{smallmatrix}\right) \in \Gl_2(A)$, we have
\begin{equation}
D_n f(\gamma z)= (cz+d)^{k+2n} (\det \gamma)^{-l-n} \sum_{j=0}^{n}\binom{n+k-1}{j}\frac{D_{n-j} f(z)}{(-\tilde{\pi})^j} \left(\frac{c}{cz+d}\right)^j.
\end{equation}
In other words, the function $D_n f$ is a quasi-modular form of weight $k+2n$, type $l+n$ and depth $n$.
\end{proposition}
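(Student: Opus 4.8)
The plan is to derive the transformation formula for $D_n f$ directly from the chain-rule formula of Lemma~\ref{prop:quasimodular}, applied to the composition of $f$ with a fractional linear transformation. Since $f$ is modular of weight $k$ and type $l$ for $\Gl_2(A)$, for $\gamma = \left(\begin{smallmatrix} a&b\\ c&d \end{smallmatrix}\right) \in \Gl_2(A)$ we have the identity $f(\gamma z) = (\det\gamma)^{-l}(cz+d)^k f(z)$. The idea is to apply $D_n$ to \emph{both} sides of this identity, regarded as functions of $z$. On the left-hand side, $z \mapsto f(\gamma z)$ is the composition $f \circ \gamma$, so Lemma~\ref{prop:quasimodular} applies directly. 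On the right-hand side, we use the product/Leibniz rule from Proposition~\ref{uchinosatoh}, part (3)(c): we must compute $D_m$ of the explicit function $(cz+d)^k$, which is a routine binomial computation — namely $D_m\bigl((cz+d)^k\bigr) = \binom{k}{m} c^m (cz+d)^{k-m}$ up to the normalization constant, where here the relevant normalization is that $D_m = \mathfrak{D}_m/(-\tilde{\pi})^m$ and $\mathfrak{D}_m$ is the honest Hasse derivative in $z$, so $\mathfrak{D}_m\bigl((cz+d)^k\bigr) = \binom{k}{m}c^m(cz+d)^{k-m}$ and the $(-\tilde\pi)^{-m}$ factor is carried along.

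Concretely, I would proceed as follows. First, expand $D_n\bigl(f(\gamma z)\bigr)$ using Lemma~\ref{prop:quasimodular}, which gives
\begin{equation*}
D_n(f\circ\gamma)(z) = (-1)^n\left(\frac{c}{cz+d}\right)^n \sum_{i=1}^n (-1)^i \binom{n-1}{n-i}\left(\frac{c(cz+d)}{\det\gamma}\right)^{-i}\frac{1}{(-\tilde\pi)^{n-i}}(D_i f)(\gamma z),
\end{equation*}
but this is not yet the desired shape — it expresses $D_n(f\circ\gamma)$ in terms of $(D_i f)(\gamma z)$ rather than $(D_j f)(z)$. So instead I would run the argument the other way: apply $D_n$ to the right-hand side $(\det\gamma)^{-l}(cz+d)^k f(z)$ using the Leibniz rule, obtaining
\begin{equation*}
D_n\bigl((\det\gamma)^{-l}(cz+d)^k f(z)\bigr) = (\det\gamma)^{-l}\sum_{j=0}^n \left(D_{n-j}(cz+d)^k\right) (D_j f)(z),
\end{equation*}
and then compute $D_{n-j}(cz+d)^k = \binom{k}{n-j}\frac{c^{n-j}(cz+d)^{k-(n-j)}}{(-\tilde\pi)^{n-j}}$. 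Equating this with the Lemma~\ref{prop:quasimodular} expansion of $D_n(f\circ\gamma)$ and solving, one reads off that $D_n f(\gamma z)$ equals $(cz+d)^{k+2n}(\det\gamma)^{-l-n}$ times a polynomial of degree $n$ in $c/(cz+d)$ whose coefficients are the $D_{n-j}f(z)$. The binomial coefficient $\binom{n+k-1}{j}$ should emerge from combining $\binom{k}{n-j}$ with the $\binom{n-1}{n-i}$ factors in the chain rule after re-indexing; verifying this combinatorial identity is the one genuinely fiddly computation. The $A$-periodicity of $D_n f$ follows from the $A$-periodicity of $f$ (take $\gamma = \left(\begin{smallmatrix}1&a\\0&1\end{smallmatrix}\right)$, for which $cz+d = 1$), and analyticity at $\infty$ follows from Proposition~\ref{uchinosatoh}(1): the $u$-series of $f$ has positive radius of convergence, and applying $D_n$ term-by-term to $\sum a_i u^i$ preserves this (using that $u = \tilde\pi/e_A(z)$ is itself analytic and the chain rule, or more directly that $\mathfrak{D}_n$ does not decrease the radius of convergence of a power series in a local parameter). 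Finally, the depth, weight, and type assertions are just the definition of a quasimodular form read off from the transformation law, together with the uniqueness statement from \cite{bosserp}.

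The main obstacle I anticipate is \emph{not} any deep idea but rather bookkeeping: correctly tracking the signs, the powers of $-\tilde\pi$, and the factors of $\det\gamma$ through the chain rule of Lemma~\ref{prop:quasimodular}, and then identifying the resulting sum of products of binomial coefficients with the single binomial coefficient $\binom{n+k-1}{j}$. A clean way to organize this — and the route I would actually take in the writeup — is to avoid the chain rule entirely and instead differentiate the modularity relation $f(\gamma z) = (\det\gamma)^{-l}(cz+d)^k f(z)$ directly: the left side, as a function of $z$, is $f$ composed with $\gamma$, so one computes its hyperderivatives via the known hyperderivatives of $\gamma z = (az+b)/(cz+d)$ — specifically $\mathfrak{D}_m(\gamma z) = (-1)^{m-1}\det\gamma\, c^{m-1}(cz+d)^{-m-1}$ for $m\geq 1$ — and then the identity is forced. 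This reduces everything to the Hasse-derivative identity $\mathfrak{D}_n(f\circ\gamma) = \sum \binom{k+n-1}{j}(\cdots)$ which can be proved by induction on $n$ using iterativity (Proposition~\ref{uchinosatoh}(4)), or simply quoted from \cite{bosserp}. Either way the structure of the proof is: (i) invoke the product rule and chain rule for $\{D_n\}$; (ii) do the explicit computation for the automorphy factor $(cz+d)^k$; (iii) match coefficients; (iv) deduce periodicity and analyticity at $\infty$ from the corresponding properties of $f$; (v) conclude quasimodularity of depth exactly $n$ from the definition.
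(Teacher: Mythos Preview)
The paper does not prove this proposition: it is stated as a special case of a more general result of Bosser and Pellarin and simply cited from \cite{bosserp}. Your proposal therefore supplies strictly more than the paper does.

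Your outline is essentially the standard argument (and is presumably close to what \cite{bosserp} actually do): differentiate the modularity relation $f(\gamma z)=(\det\gamma)^{-l}(cz+d)^k f(z)$, expand the left side via the chain rule (Lemma~\ref{prop:quasimodular}) and the right side via the Leibniz rule, and solve for $(D_n f)(\gamma z)$ by induction on $n$. The one point where you are a bit breezy is the combinatorial identification: equating the two expansions gives a triangular system relating $(D_i f)(\gamma z)$ for $i\leq n$ to $(D_j f)(z)$ for $j\leq n$, and inverting it produces a double sum of products of the binomial coefficients $\binom{n-1}{n-i}$ and $\binom{k}{n-j}$ that must collapse to $\binom{n+k-1}{j}$. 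You correctly flag this as the fiddly step, and indeed the cleanest way to handle it is the inductive route you mention at the end, using iterativity $D_1\circ D_n=(n+1)D_{n+1}$ together with the Pascal-type identity $\binom{n+k-1}{j}+\binom{n+k-1}{j-1}=\binom{n+k}{j}$, rather than trying to match the two expansions directly. Your remarks on $A$-periodicity, analyticity at $\infty$, and reading off the weight, type and depth from the transformation law are all correct and routine.
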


\subsection{Integrality and vanishing results}

For $i \in \mathbb{N}$, write $[i]=T^{q^i}-T$, the product of all monic prime polynomials of degree dividing $i$, $d_i=[1]^{q^{i-1}} \cdots [i-1]^q[i]$, the product of all monics of degree $i$, and $d_0=1$. In \cite{bosserp}, Bosser and Pellarin obtain the following result on the action of the $D_n$'s on the $u$-series coefficients of quasimodular forms:

\begin{proposition}\label{formuladn}
Let $f \in \An(\Omega)$ be analytic at $\infty$ with $u$-series expansion $f(z)=\sum_{i\geq 0} a_i u^i$. Then for all $n \geq 0$ we have $D_n f(z)= \sum_{i\geq 2} b_{n,i} u^i$, where
\begin{equation}\label{bs}
b_{n,i}=\sum_{r=1}^{i-1}(-1)^{n+r}\binom{i-1}{r}\left(\sum_{\substack{n_1, \ldots, n_r \geq 0 \\ q^{n_1}+ \cdots +q^{n_r}=n}} \frac{1}{d_{n_1} \cdots d_{n_r}}\right) a_{i-r}.
\end{equation}
\end{proposition}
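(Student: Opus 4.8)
The statement to prove is Proposition \ref{formuladn}: that for $f$ analytic at $\infty$ with $u$-expansion $f = \sum_{i \geq 0} a_i u^i$, one has $D_n f = \sum_{i \geq 2} b_{n,i} u^i$ with the explicit formula \eqref{bs}.

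The plan is to proceed by reducing everything to the single case $n = 1$, i.e.\ to understanding the action of $D_1 = \frac{1}{-\tilde\pi}\mathfrak{D}_1$ on $u$-series, and then bootstrapping to general $n$ via the iterativity property $D_i \circ D_j = \binom{i+j}{i} D_{i+j}$ together with the base-$p$ decomposition $D_n = \frac{1}{n_0!\cdots n_s!} D_{p^s}^{n_s} \circ \cdots \circ D_1^{n_0}$. First I would establish the fundamental identity $D_1 u = -u^2$ (equivalently $\mathfrak{D}_1 u = \tilde\pi u^2$), which follows from $u = \tilde\pi/e_A(z)$ and the differential equation satisfied by the Carlitz exponential, namely $e_A'(z) = 1$, so $\frac{du}{dz} = -\tilde\pi e_A'(z)/e_A(z)^2 = -u^2/\tilde\pi$; hence $D_1 u = -u^2$. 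More generally one needs the ``higher'' version: $D_n u^s$ expanded as a $u$-series. Since $\{D_n\}$ is a higher derivation and $u^s$ is a product, the Leibniz rule reduces this to knowing $D_n u$; one computes $D_n u = -\sum_{\text{compositions}} (\text{coefficient}) u^{n+1}$ or more precisely one establishes the key lemma that $D_n u = c_n u^{n+1}$ for suitable constants $c_n$ built from the $d_i$'s. This is where the quantities $\frac{1}{d_{n_1}\cdots d_{n_r}}$ will enter: writing $z$ in terms of $u$ and inverting, or more efficiently using the known formula for $D_n$ applied to powers of $u$ coming from the theory of the Carlitz module (the $d_i$ being the ``factorials'' of the Carlitz setting).

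The cleanest route is probably: (1) prove $D_n(u^s) = \sum_{i} (\text{explicit}) u^{s+i}$ by first handling $s=1$ and then using the product rule of the higher derivation, organizing the sum over how the ``total order $n$'' of differentiation is distributed; (2) observe that because $u^s = \tilde\pi^s/e_A(z)^s$ and the operator $D_n$ acting on $1/e_A$ has a clean description via the functional equation of $e_A$ and the iterativity, the coefficient that appears is exactly $\sum_{n_1 + \cdots + n_r = n \text{ (in the $q$-adic sense)}} \frac{1}{d_{n_1}\cdots d_{n_r}}$; (3) apply linearity: $D_n f = \sum_i a_i D_n(u^i)$, collect the coefficient of $u^i$, reindex by $r = $ (number of ``units'' of differentiation going to raise the power), and the binomial $\binom{i-1}{r}$ appears from the combinatorics of $D_n$ acting on a product of $i$ copies of $u$ (or from $\mathfrak{D}_r$ acting on $u^{i-1}$-type terms after one factors out). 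The sign $(-1)^{n+r}$ comes from the $n$ factors of $\frac{1}{-\tilde\pi}$ in $D_n$ combined with the $r$ sign flips from each application of $D_1 u = -u^2$. The fact that the series starts at $i \geq 2$ rather than $i \geq 0$ is because $D_n(u^0) = D_n(1) = 0$ and $D_n(u^1) = c_n u^{n+1}$ has order $n+1 \geq 2$ when $n \geq 1$ (and for $n = 0$ the sum in \eqref{bs} is empty, consistent with $D_0 f = f$ — though strictly the claim ``$i \geq 2$'' needs $n \geq 1$, or one reads the $n=0$ case separately).

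The main obstacle I expect is step (2): pinning down precisely that the coefficient of $u^{s+r}$ in $D_n(u^s)$, summed appropriately, yields the sum $\sum_{q^{n_1} + \cdots + q^{n_r} = n} \frac{1}{d_{n_1}\cdots d_{n_r}}$ with exactly the right binomial weight. This requires care with the iterativity constants $\binom{i+j}{i}$ and with Lucas's theorem (to know when these binomials vanish mod $p$), and with the combinatorial identity that reorganizes ``all ways to write $n$ as an ordered sum of powers of $q$'' against ``all ways to build $D_n$ from the $D_{p^j}$'s.'' An efficient alternative that sidesteps some of this is to prove the formula by generating-function manipulation: encode $\sum_n D_n(u) X^n$ and use the chain rule / functional equation to get a closed form, then extract coefficients; but verifying that the extracted coefficients match \eqref{bs} is itself the crux. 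In any case, once the $n=1$ and general $D_n(u)$ facts are in hand, the passage to arbitrary $f = \sum a_i u^i$ is a routine application of $C$-linearity and the higher-derivation product rule, and the reindexing to produce $\binom{i-1}{r} a_{i-r}$ is bookkeeping.
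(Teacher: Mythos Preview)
The paper does not give its own proof of this proposition; it is quoted from Bosser--Pellarin \cite{bosserp} without argument. So strictly speaking there is no in-paper proof to compare against, only the cited one.

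Your outline is in the right spirit but more circuitous than needed. The detour through the base-$p$ factorization $D_n = \frac{1}{n_0!\cdots n_s!}D_{p^s}^{n_s}\circ\cdots\circ D_1^{n_0}$ and Lucas's theorem is unnecessary, and building up from $D_n(u)$ to $D_n(u^s)$ via the higher Leibniz rule creates exactly the combinatorial reorganization you yourself flag as the main obstacle. The direct argument (essentially what Bosser--Pellarin do) expands $u(z)^s$ around an arbitrary point $w$ in one step: additivity of the Carlitz exponential gives $e_A(z)=e_A(w)+e_A(z-w)$, and since $e_A(z-w)=\sum_{m\ge0}\beta_m(z-w)^{q^m}$ one obtains $u(z)^s=u(w)^s(1+X)^{-s}$ with $X$ a power series in $(z-w)$ supported only on exponents that are powers of $q$. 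The negative-binomial expansion $(1+X)^{-s}=\sum_{r\ge0}(-1)^r\binom{s+r-1}{r}X^r$ then produces, in $X^r$, precisely the sum over tuples $(n_1,\ldots,n_r)$ with $q^{n_1}+\cdots+q^{n_r}=n$ and the product $1/(d_{n_1}\cdots d_{n_r})$; the binomial $\binom{i-1}{r}$ is $\binom{s+r-1}{r}$ with $i=s+r$, the $(-1)^r$ is the sign from that series, and the $(-1)^n$ comes from $D_n=(-\tilde\pi)^{-n}\mathfrak{D}_n$. Summing over $s$ by linearity finishes with no further combinatorics.

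One small correction: with these normalizations $D_1u=u^2$, not $-u^2$. You can verify this against the $n=1$ case of \eqref{bs}, which gives $b_{1,i}=(i-1)a_{i-1}$, i.e.\ $D_1(u^j)=ju^{j+1}$. Your account of the sign $(-1)^{n+r}$ is accordingly slightly off: the $(-1)^r$ arises from the geometric/negative-binomial expansion of $(1+X)^{-s}$, not from iterated sign flips in $D_1u$.
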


From this explicit formula we can clearly see that
\begin{corollary}\label{derivativesok2}
For $n<q^e$, the operator $D_n$ preserves $\mathfrak{p}$-integrality of the $u$-series coefficients for all prime ideals $\mathfrak{p}$ generated by a prime polynomial of degree $\geq e$.
\end{corollary}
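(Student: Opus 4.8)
The plan is to read off the claim directly from the explicit formula for the $u$-series coefficients $b_{n,i}$ of $D_n f$ given in Proposition \ref{formuladn}. Assume $f$ has $u$-series coefficients $a_i$ that are $\mathfrak{p}$-integral, where $\mathfrak{p}$ is generated by a prime polynomial of degree $\geq e$, and suppose $n < q^e$. By equation (\ref{bs}), each $b_{n,i}$ is an $A$-linear combination of the coefficients $a_{i-r}$ with coefficients built out of binomial coefficients $\binom{i-1}{r}$ (which lie in $\mathbb{F}_q \subset A$) and the inner sums
\begin{equation*}
\sum_{\substack{n_1,\ldots,n_r \geq 0 \\ q^{n_1}+\cdots+q^{n_r}=n}} \frac{1}{d_{n_1}\cdots d_{n_r}}.
\end{equation*}
So it suffices to show that each such inner sum is $\mathfrak{p}$-integral, i.e. that $v_{\mathfrak{p}}\!\left(\tfrac{1}{d_{n_1}\cdots d_{n_r}}\right) \geq 0$ for every tuple $(n_1,\ldots,n_r)$ contributing to the sum; since $v_{\mathfrak{p}}$ is a valuation, a sum of $\mathfrak{p}$-integral terms is $\mathfrak{p}$-integral, and then $b_{n,i} \in A_{\mathfrak{p}}$ follows.

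The key point is a constraint on the indices $n_j$ forced by the condition $q^{n_1}+\cdots+q^{n_r}=n$. Since each summand $q^{n_j} \geq 1$ and there are $r \geq 1$ of them summing to $n$, we certainly have $q^{n_j} \leq n < q^e$ for every $j$, hence $n_j < e$, i.e. $n_j \leq e-1$ for all $j$. Therefore each $d_{n_j}$ appearing in the product is a product of monic polynomials of degree $n_j \leq e-1 < \deg \pi$; in particular every monic irreducible factor of $d_{n_j}$ has degree $\leq e-1$, so none of them is an associate of $\pi(T)$, and $\pi \nmid d_{n_j}$. Concretely, $d_{n_j} = [1]^{q^{n_j-1}}\cdots[n_j-1]^q[n_j]$ where $[i]=T^{q^i}-T$ is the product of all monic primes of degree dividing $i$; since every such $i$ satisfies $i \leq n_j \leq e-1 < d = \deg\pi$, the prime $\pi$ divides none of the $[i]$, hence $v_{\mathfrak{p}}(d_{n_j}) = 0$. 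Consequently $v_{\mathfrak{p}}(d_{n_1}\cdots d_{n_r}) = 0$ and $\tfrac{1}{d_{n_1}\cdots d_{n_r}}$ is a $\mathfrak{p}$-unit, in particular $\mathfrak{p}$-integral.

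Putting this together: every $b_{n,i}$ is a finite $\mathbb{F}_q$-linear combination of products of $\mathfrak{p}$-integral elements (the $a_{i-r}$ and the $\mathfrak{p}$-units $(d_{n_1}\cdots d_{n_r})^{-1}$), hence $v_{\mathfrak{p}}(b_{n,i}) \geq 0$, so $D_n f$ has $\mathfrak{p}$-integral $u$-series coefficients, which is the assertion. There is no real obstacle here; the only thing to be careful about is the elementary inequality that forces $n_j \leq e-1$ from $q^{n_j} \leq n < q^e$, and the observation that $\deg[i] $ is governed by $i$ so that degree-$(\leq e-1)$ data cannot involve the degree-$d$ prime $\pi$ when $d \geq e$.
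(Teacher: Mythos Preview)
Your proof is correct and follows essentially the same approach as the paper: both use the explicit formula of Proposition \ref{formuladn}, observe that $q^{n_j}\leq n<q^e$ forces $n_j<e$, and conclude that the denominators $d_{n_j}$ involve only primes of degree $<e$ and hence are $\mathfrak{p}$-units. The paper's version is simply more terse.
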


\begin{proof}
Let $e$ be a positive integer. If $n<q^e$, then we have $n_j<e$ for each $n_j$ appearing in the sum defining the $b_{n,i}$'s in equation (\ref{bs}). Since $d_{n_j}$ is only divisible by primes of degree $\leq n_j$, for $n<q^e$ $D_n$ introduces only denominators of degree $<e$.
\end{proof}

From this it easily follows that

\begin{corollary}\label{derivativesok}
Suppose that $f \equiv f' \pmod{\mathfrak{p}}$ for $\mathfrak{p}$ generated by a prime of degree $d$. Then $D_n(f) \equiv D_n(f') \pmod{\mathfrak{p}}$ for $n < q^d$.
\end{corollary}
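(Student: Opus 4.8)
The statement to prove is Corollary~\ref{derivativesok}: if $f \equiv f' \pmod{\mathfrak{p}}$ for $\mathfrak{p}$ generated by a prime of degree $d$, then $D_n(f) \equiv D_n(f') \pmod{\mathfrak{p}}$ for $n < q^d$.

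The plan is to reduce everything to the explicit formula for the $u$-series coefficients of $D_n$ given in Proposition~\ref{formuladn}, together with the integrality bookkeeping already carried out in Corollary~\ref{derivativesok2}. First I would observe that the hypothesis $f \equiv f' \pmod{\mathfrak{p}}$ means, by definition of $v_{\mathfrak{p}}$ on $u$-series, that writing $f = \sum a_i u^i$ and $f' = \sum a_i' u^i$ we have $v_{\mathfrak{p}}(a_i - a_i') \geq 1$ for every $i$; in particular each $a_i - a_i'$ is $\mathfrak{p}$-integral. Since $D_n$ is $C$-linear (Proposition~\ref{uchinosatoh}(3)(b)), it suffices to apply $D_n$ to the difference $f - f' = \sum (a_i - a_i') u^i$ and show $D_n(f-f') \equiv 0 \pmod{\mathfrak{p}}$, i.e.\ that every $u$-series coefficient of $D_n(f-f')$ lies in $\mathfrak{p}$.

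Now I would invoke the formula of Proposition~\ref{formuladn}: the $i$-th coefficient of $D_n(f-f')$ is
\begin{equation*}
b_{n,i} = \sum_{r=1}^{i-1}(-1)^{n+r}\binom{i-1}{r}\left(\sum_{\substack{n_1, \ldots, n_r \geq 0 \\ q^{n_1}+ \cdots +q^{n_r}=n}} \frac{1}{d_{n_1} \cdots d_{n_r}}\right)(a_{i-r}-a_{i-r}').
\end{equation*}
Each $a_{i-r} - a_{i-r}'$ has $v_{\mathfrak{p}} \geq 1$, and the binomial coefficients are integers, so the only thing that could spoil $v_{\mathfrak{p}}(b_{n,i}) \geq 1$ is a denominator in the inner sum that is divisible by $\mathfrak{p}$. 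But exactly as in the proof of Corollary~\ref{derivativesok2}: since $n < q^d$, any exponents $n_1,\dots,n_r$ with $q^{n_1}+\cdots+q^{n_r} = n$ must all satisfy $n_j < d$ (indeed $q^{n_j} \le n < q^d$), and $d_{n_j}$ is a product of monic polynomials of degree $n_j < d$, hence is divisible only by primes of degree $\le n_j < d$ and in particular not by $\pi(T)$, which has degree $d$. Therefore $v_{\mathfrak{p}}(1/(d_{n_1}\cdots d_{n_r})) = 0$, and multiplying the integral binomial coefficient, the $\mathfrak{p}$-unit denominator, and the factor $a_{i-r}-a_{i-r}'$ of positive valuation gives $v_{\mathfrak{p}}(b_{n,i}) \geq 1$ for every $i$. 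Summing over $r$ preserves this, so $v_{\mathfrak{p}}(D_n(f-f')) \geq 1$, which is the assertion.

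The only mild subtlety — and the step I would treat most carefully — is the passage from "$f \equiv f' \pmod{\mathfrak{p}}$" to applying the coefficient formula of Proposition~\ref{formuladn} to $f - f'$: one must note that $f - f'$ is itself analytic at $\infty$ with $u$-expansion $\sum(a_i - a_i')u^i$, so that Proposition~\ref{formuladn} genuinely applies to it, and that $D_n$ commutes with this subtraction by $C$-linearity. Once that is in place the argument is a one-line reuse of the degree count from Corollary~\ref{derivativesok2}, so I do not expect any real obstacle; the statement is essentially a corollary in the literal sense, and the proof in the excerpt should be short.
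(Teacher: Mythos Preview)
Your proposal is correct and is precisely the argument the paper has in mind: the paper gives no proof beyond ``From this it easily follows,'' and your write-up simply spells out the linearity-plus-degree-count that makes Corollary~\ref{derivativesok} an immediate consequence of Corollary~\ref{derivativesok2} and Proposition~\ref{formuladn}. There is nothing to add or correct.
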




We will also need:

\begin{proposition}\label{orderomega}
Let $w \in \Omega$ and $f \in \An(\Omega)$, then $\order_{w} D_n(f) \geq \order_{w}(f)-n$. When $n\leq \order_{w}(f)$, we have equality if and only if $\binom{\order_{w}(f)}{n}\not \equiv 0 \pmod{p}$.
\end{proposition}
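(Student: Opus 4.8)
The statement is a local claim at a point $w \in \Omega$, so the natural approach is to work directly with the power series expansion of $f$ at $w$ and apply part~(\ref{Dnexpansion}) of Proposition~\ref{uchinosatoh}. First I would write $m = \order_w(f)$, so that near $w$ we have $f(z) = \sum_{i \geq m} c_{i,w}(z-w)^i$ with $c_{m,w} \neq 0$. By Proposition~\ref{uchinosatoh}(\ref{Dnexpansion}),
\begin{equation*}
D_n f(z) = \frac{1}{(-\tilde{\pi})^n}\sum_{i \geq m}\binom{i}{n}c_{i,w}(z-w)^{i-n},
\end{equation*}
and since $\binom{i}{n} = 0$ for $i < n$, the lowest-order term that can appear has degree $\max(m,n) - n \geq m - n$ in $(z-w)$. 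This immediately gives $\order_w D_n(f) \geq \order_w(f) - n = m - n$, proving the inequality. (When $n > m$ this says only $\order_w D_n(f) \geq m - n$, which is vacuous or weaker than the trivial bound $\order_w D_n(f) \geq 0$, but that is harmless.)

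For the equality statement, assume $n \leq m$. Then the term of degree $m - n$ in the expansion of $D_n f$ has coefficient $\frac{1}{(-\tilde{\pi})^n}\binom{m}{n}c_{m,w}$. Since $\tilde{\pi} \neq 0$ and $c_{m,w} \neq 0$ in the field $C$, this coefficient is nonzero if and only if $\binom{m}{n} \neq 0$ in $C$. Because $C$ has characteristic $p$, $\binom{m}{n}$ (an integer) is nonzero in $C$ precisely when $\binom{m}{n} \not\equiv 0 \pmod p$. If $\binom{m}{n} \not\equiv 0 \pmod p$, then $D_n f$ has a nonzero term of degree exactly $m - n$, so $\order_w D_n(f) = m - n$; conversely, if $\binom{m}{n} \equiv 0 \pmod p$, that term vanishes and hence $\order_w D_n(f) > m - n = \order_w(f) - n$. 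This establishes the "if and only if."

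I do not anticipate a genuine obstacle here: the entire argument is bookkeeping on the coefficients of a single power series, and the only input of substance — that $D_n$ acts termwise with the binomial coefficient $\binom{i}{n}$ on a local expansion, with unchanged radius of convergence — is exactly Proposition~\ref{uchinosatoh}(\ref{Dnexpansion}), which we may quote. The one point deserving a sentence of care is the passage from "$\binom{m}{n} \neq 0$ in the ring $\mathbb{Z}$" (where it is of course always positive) to "$\binom{m}{n} \neq 0$ in $C$," which is where the characteristic-$p$ hypothesis and hence the mod $p$ condition enters; one could phrase this via Lucas' theorem if an explicit criterion is wanted, but it is not needed for the statement as written.
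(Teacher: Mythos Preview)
Your proposal is correct and takes essentially the same approach as the paper, which simply says the result follows from Proposition~\ref{uchinosatoh} part~\ref{Dnexpansion}. You have unpacked that reference in exactly the expected way.
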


\begin{proof}
This follows by Proposition \ref{uchinosatoh} part \ref{Dnexpansion}.
\end{proof}

\subsection{A computational tool}\label{tool}

The action of $D_n$ quickly becomes difficult to compute explicitly as $n$ grows. A better-behaved operator was defined by Serre in the classical case (see \cite{kanekokoike}), and we will use its analogue in the Drinfeld setting. Let $n$ and $d$ be non-negative integers. The \emph{$n^{\text{th}}$ Serre's operator of degree $d$} is defined by the formula:
\begin{equation}
\partial_n^{(d)}f=D_nf+\sum_{i=1}^{n}(-1)^i\binom{d+n-1}{i}(D_{n-i}f)(D_{i-1}E).
\end{equation}
In~\cite{bosserp2}, the authors show that $\partial_n^{(k)}$ sends Drinfeld modular forms of weight $k$ and type $l$ to Drinfeld modular forms of weight $k+2n$ and type $l+n$. 



For simplicity we will denote the operator $\partial_1^{(k)}$ by $\partial$, and make the convention that if $f$ is a Drinfeld modular form of weight $k$, then $\partial (f)=\partial_1^{(k)}(f)$. Then for $f$ a Drinfeld modular form of weight $k$,
\begin{equation*}
\partial (f)= D_1(f) - k E f.
\end{equation*}

We have the following:
\begin{proposition}[Gekeler \cite{gekeler}]\label{partial}
\textbf{}
\begin{enumerate}
\item Let $f_i$ for $i =1, 2$ be Drinfeld modular forms of weight $k_i$, then $\partial(f_1f_2)=\partial(f_1)f_2+f_1\partial(f_2)$.
\item $\partial(g)=-h$ and $\partial(h)=0$.
\end{enumerate}
\end{proposition}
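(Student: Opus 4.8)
The plan is to dispatch (1) by a direct Leibniz-type computation and to obtain (2) by combining the known weight--type behaviour of $\partial$ with a look at the cusp $\infty$; the only point requiring real care is the normalization constant in $\partial(g)=-h$.

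For (1): since $f_1f_2$ has weight $k_1+k_2$, by definition $\partial(f_1f_2)=D_1(f_1f_2)-(k_1+k_2)Ef_1f_2$. The product rule for the hyperderivative in Proposition~\ref{uchinosatoh} gives $D_1(f_1f_2)=f_1D_1(f_2)+D_1(f_1)f_2$, and writing $(k_1+k_2)Ef_1f_2=k_2(Ef_2)f_1+k_1(Ef_1)f_2$ lets one regroup the expression as $f_1\bigl(D_1(f_2)-k_2Ef_2\bigr)+\bigl(D_1(f_1)-k_1Ef_1\bigr)f_2=f_1\partial(f_2)+\partial(f_1)f_2$. This is entirely formal.

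For (2) I would use that $\partial$ raises the weight by $2$ and the type by $1$, so that $\partial(g)\in M_{q+1,1}(\Gl_2(A))$ and $\partial(h)\in M_{q+3,2}(\Gl_2(A))$, together with the fact (Proposition~\ref{formuladn}) that $D_1f$ has order at least $2$ at $\infty$ for every $f$ analytic there. Since $h$ has a simple zero at $\infty$ and $E$ is analytic at $\infty$, the term $Eh$ in $\partial(h)=D_1h-(q+1)Eh$ has order $\geq 1$ at $\infty$, so $\partial(h)$ itself vanishes at $\infty$. Expressing a form for $\Gl_2(A)$ of a fixed weight and type as a combination of the linearly independent monomials $g^ah^b$, and using that $g^ah^b$ has order exactly $b$ at $\infty$, one reads off from the equation $(q-1)a+(q+1)b=k$ that for $q\geq 3$ the space $M_{q+3,2}(\Gl_2(A))$ contains no nonzero form vanishing at $\infty$; hence $\partial(h)=0$. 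The same bookkeeping gives $M_{q+1,1}(\Gl_2(A))=C\cdot h$, so $\partial(g)=c\,h$ for a scalar $c\in C$.

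It remains to show $c=-1$, and this is the delicate step: everything so far has been formal, whereas the value of $c$ (as opposed to merely its nonvanishing) depends on the normalizations of the leading $u$-coefficients. I would argue as follows. By (1) and $\partial(h)=0$ we have $\partial(h^{q-1})=(q-1)h^{q-2}\partial(h)=0$, which, since $h^{q-1}$ has weight $q^2-1$, means $(q^2-1)Eh^{q-1}=D_1(h^{q-1})$. Plugging the expansion $h=-u+O(u^2)$ into the explicit formula of Proposition~\ref{formuladn} for $D_1$ yields $D_1(h^{q-1})=(q-1)u^{q}+O(u^{q+1})$ and $h^{q-1}=u^{q-1}+O(u^q)$, whence $E=u+O(u^2)$ (using $q+1\equiv1$ and $q-1\equiv-1\pmod p$). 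Substituting this, together with $g=1+O(u^2)$ (as $g$ has type $0$, hence a $u^{q-1}$-expansion), into $\partial(g)=D_1g-(q-1)Eg$ and recalling that $D_1g$ has order $\geq 2$, one finds $\partial(g)=-(q-1)u+O(u^2)\equiv u+O(u^2)\pmod p$; comparison with $c\,h=c(-u+O(u^2))$ forces $c=-1$. Alternatively, one may simply invoke the matching normalization in Gekeler~\cite{gekeler}, from which the proposition is quoted.
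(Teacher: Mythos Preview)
The paper does not supply a proof of this proposition: it is quoted from Gekeler~\cite{gekeler}, with the sign convention explained in the remark just before it, so there is no in-paper argument to compare against. Your proof is correct and self-contained.

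Part~(1) is the expected Leibniz computation. For part~(2), your approach---using the weight/type shift of $\partial$ together with the structure of $C[g,h]$ to reduce to a dimension count, and then fixing the scalar $c$ by comparing leading $u$-coefficients---is the natural one and is essentially how Gekeler argues. The device of deducing the leading coefficient of $E$ from the relation $\partial(h^{q-1})=0$ is a clean way to avoid quoting the expansion of $E$ separately. One small remark: your dimension argument for $\partial(h)=0$ is stated for $q\geq 3$; for $q=2$ the space $M_{q+3}$ does contain the cusp form $g^2h$, so a finer order-of-vanishing check would be needed there. This is harmless in the context of the paper, whose main results already assume $q$ odd or $q\geq 3$.
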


This proposition allows us to compute the action of $\partial$ on all Drinfeld modular forms, since $g$ and $h$ generate the algebra of Drinfeld modular forms. Furthermore, since $D_n(E)=E^{n+1}$ for $1\leq n <p$, a tedious but easy computation shows that for a Drinfeld modular form $f$ of weight $k$, we have
\begin{equation}
\partial^n f = n! \partial_n^{(k)} f
\end{equation}
for $1\leq n < p$, where again the exponent on $\partial$ on the lefthand side denotes $n$-fold composition of the $\partial$ operator. This relation in fact holds for $p \leq n < q$ as well, which simply implies that the $n$-fold composition of $\partial$ beyond $\partial^{p-1}$ is identically zero, as expected in characteristic $p$.

\section{Weierstrass points on $X_0(\mathfrak{p})$}\label{weierstrass}

\subsection{Previous results}

As discussed in Section \ref{theory}, crucial to the study of Weierstrass points in positive characteristic is the knowledge of the curve's canonical gap sequence. 



\begin{proposition}[Armana, personal communication]\label{armana}
Let $\mathfrak{p}$ be a prime ideal generated by a polynomial of degree at least $3$ in $\mathbb{F}_q[T]$. Then $X_0(\mathfrak{p})$ has a classical gap sequence.
\end{proposition}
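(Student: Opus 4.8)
The plan is to show that $X_0(\mathfrak{p})$ has canonical gap sequence $(1,2,\ldots,g_{\mathfrak{p}})$ by exhibiting, for a generic point $P$, enough pole numbers below the genus — or, equivalently, by showing the canonical orders are $(1,2,\ldots,g_{\mathfrak{p}}-1)$ at a generic point. Since the statement is attributed to Armana by personal communication, I expect the argument to be short and to lean on a structural result of hers about the space of Drinfeld modular forms of weight $2$ and type $1$ for $\Gamma_0(\mathfrak{p})$, which by the Goss--Gekeler--Reversat theorem recalled in Section~\ref{xnotp} is canonically identified with the space of regular differentials $\Omega_{X_0(\mathfrak{p}),\mathrm{reg}}$, hence with $L(C)$ for a canonical divisor $C$. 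Concretely, I would pick a point $P$ corresponding to a generic $z_0 \in \Omega$ (away from cusps and elliptic points), choose $C$ supported away from $P$, fix a local parameter $s$ at $P$, and analyze the Wronskian matrix $H=(\mathfrak{D}_s^{(j)}(\phi_i))$ of Section~\ref{theory} for a basis $\phi_1,\ldots,\phi_{g_{\mathfrak{p}}}$ of double cusp forms of weight $2$ and type $1$.

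The key steps, in order, would be: (i) translate ``classical gap sequence'' into the assertion that the Wronskian $W(\phi,s)$ does not vanish identically on a full set of consecutive hyperderivative columns $H^{(0)},H^{(1)},\ldots,H^{(g_{\mathfrak{p}}-1)}$ — i.e. that the canonical orders $J(\phi,s)$ equal $(1,2,\ldots,g_{\mathfrak{p}}-1)$; (ii) observe that by Section~\ref{quasimodular} the hyperderivatives $D_n$ applied to weight-$2$ type-$1$ modular forms produce quasimodular forms whose $u$-expansions at $\infty$ are governed by the explicit formula of Proposition~\ref{formuladn}, so the relevant non-vanishing can be checked on $u$-series expansions; (iii) use a basis of double cusp forms whose $u$-expansions at $\infty$ have distinct leading orders $2,3,\ldots,g_{\mathfrak{p}}+1$ — such a ``staircase'' basis exists because the space is finite-dimensional and one can echelonize with respect to $u$-order — and then argue that the Hasse-derivative Wronskian of such a staircase is a nonzero $u$-series, relying on a Lucas/Hasse-derivative argument that the relevant binomial coefficients $\binom{m}{n}$ for $n\le g_{\mathfrak{p}}-1$ and $m$ in the staircase do not all vanish mod $p$; (iv) conclude that the generic canonical orders are $(1,\ldots,g_{\mathfrak{p}}-1)$, hence the generic gap sequence is $(1,\ldots,g_{\mathfrak{p}})$, which is exactly the classical sequence. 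The hypothesis $\deg \mathfrak{p}\ge 3$ enters to guarantee $g_{\mathfrak{p}}\ge 2$ (so that Remark~\ref{genusconstraint} applies and Weierstrass points make sense) and, more delicately, to control the size of $g_{\mathfrak{p}}$ relative to $q$ so that the binomial coefficients appearing in step~(iii) are not forced to vanish modulo $p$.

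The main obstacle I anticipate is precisely this characteristic-$p$ issue in step~(iii): in positive characteristic a Wronskian of functions with prescribed vanishing orders can collapse because of Lucas-type congruences among binomial coefficients, which is exactly the phenomenon behind non-classical gap sequences (cf. the genus-$3$ Fermat quartic example over $\overline{\mathbb{F}}_3$ in the text). So the heart of the proof must be a genuine computation — either directly on $u$-series using Proposition~\ref{formuladn}, or via Armana's structural description of the module of weight-$2$ forms — showing that the relevant $(g_{\mathfrak{p}}-1)\times(g_{\mathfrak{p}}-1)$ minor of hyperderivatives is a nonzero element of $\overline{\mathbb{F}}_q((u))$, equivalently that no nontrivial $\overline{\mathbb{F}}_q$-linear combination of $\phi_1,\ldots,\phi_{g_{\mathfrak{p}}}$ together with its first $g_{\mathfrak{p}}-1$ Hasse derivatives vanishes to order $\ge g_{\mathfrak{p}}$ at the generic point. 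Once the staircase basis and a careful bookkeeping of $u$-orders under $D_n$ are in place, the rest is formal; so I would present the $u$-expansion staircase plus the binomial non-vanishing as the single technical lemma, and derive Proposition~\ref{armana} as an immediate corollary via the dictionary of Section~\ref{theory} between canonical orders, gap sequences, and the Wronskian.
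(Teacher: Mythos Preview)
Your approach is genuinely different from the paper's, and the crucial step you flag as ``the single technical lemma'' is not actually carried out---and would be hard to carry out along the lines you sketch.  The paper's proof avoids the Wronskian entirely: by the dichotomy recalled in Section~\ref{theory}, either the canonical gap sequence is classical, or \emph{every} point of $X_0(\mathfrak{p})$ is an osculation point (i.e.\ has a pole number $\le g_{\mathfrak{p}}$).  So it suffices to exhibit a single point that is \emph{not} an osculation point.  Armana's argument, modeled on Ogg's, does this arithmetically: take a $K$-rational point $P$ whose reduction mod $\mathfrak{p}$ is not supersingular (for instance, a cusp), and use the two-component structure of the special fiber of $\overline{M}_0(\mathfrak{p})$ together with the Fricke involution to force any nonconstant function regular away from $P$ to acquire at least $g_{\mathfrak{p}}+1$ zeroes on the other component.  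Hence the smallest pole number at $P$ is $\ge g_{\mathfrak{p}}+1$, so $P$ is not an osculation point and the gap sequence must be classical.  No Wronskian, no $u$-series, no binomial coefficients.

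Your proposed direct computation has two concrete problems.  First, the ``staircase'' you posit at $\infty$ with consecutive $u$-orders $2,3,\ldots,g_{\mathfrak{p}}+1$ does not exist: by Proposition~\ref{goodbasiscusp} the actual $u$-orders of an echelonized basis at a cusp are $(q-1)i+q$ for $0\le i\le g_{\mathfrak{p}}-1$, jumping by $q-1$, because the uniformizer at the cusp is $u^{q-1}$, not $u$.  Second, even with the correct orders, the non-vanishing of the Hasse-derivative determinant via leading $u$-terms is exactly the kind of Lucas-type computation that fails in general---see the Remark following Proposition~\ref{goodordercusp}, where the paper notes that already for $d>3$ the naive order bound $\order_\infty D_k(f_l)\ge \order_\infty(f_l)+k$ is false.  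So step~(iii) is not a routine lemma but the entire difficulty, and your sketch gives no mechanism to get past it.  The missing idea is that you do not need to show the Wronskian is nonzero directly; you only need one non-osculation point, and reduction modulo~$\mathfrak{p}$ hands you one.
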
 

\begin{proof}
Recall from Section \ref{theory} that if $X$ is a smooth projective irreducible curve defined over an algebraically closed field that has a classical gap sequence, then the osculation points and the Weierstrass points of $X$ coincide; if $X$ does not have a classical gap sequence then every point of $X$ is an osculation point.

Using an argument analogous to Ogg's argument in the classical case, Armana \cite{armanaphd} shows the following: Let $P$ be a $K$-rational point of $X_0(\mathfrak{p})$ such that its unique extension to a section of $\overline{M}_0(\mathfrak{p})$ over $A$ is not supersingular at $\mathfrak{p}$, and denote by $c\geq 1$ the smallest pole number at $P$. Then $c \geq 1+ g_{\mathfrak{p}}$, where as before $g_{\mathfrak{p}}$ is the genus of $X_0(\mathfrak{p})$.

We repeat her argument here since \cite{armanaphd} is in French: Let $P$ be such a point, and let $c \geq 1$ be an integer such that $c$ is a pole number of $P$; recall that this means that there is a function $F$ on $X$ that has a pole of order $c$ at $P$ and is regular elsewhere. Since $P$ is $K$-rational, we may suppose that $F$ is defined over $K$ as well. The Fricke involution $W_{\mathfrak{p}}$ of $X_0(\mathfrak{p})$ is also defined over $K$, and we write $P' = W_{\mathfrak{p}}(P)$; $P'$ is also $K$-rational. Up to adding to $F$ a constant belonging to $K$, we may suppose that $f(P') =0$. 

As stated in Section \ref{xnotp} the reduction of $X_0(\mathfrak{p})$ modulo $\mathfrak{p}$ is given by two copies $Z$ and $Z'$ of $X_0(1)$ intersecting transversally at the $g_{\mathfrak{p}}+1$ supersingular points over the algebraic closure of $A/\mathfrak{p}$ and interchanged by the Fricke involution $W_{\mathfrak{p}}$. Without loss of generality, suppose that the reduction modulo $\mathfrak{p}$ of $P$, which we denote $\tilde{P}$, belongs to $Z$ and the reduction modulo $\mathfrak{p}$ of $P'$, denoted $\tilde{P'}$, belongs to $Z'$. Up to multiplication by a constant in $K^{\times}$, we may suppose that the reduction modulo $\mathfrak{p}$ of $F$, $\tilde{F}$, is reduced and non-constant.

On $Z'$, $\tilde{F}$ has a zero at $\tilde{P'}$ and no pole since $\tilde{P}$ is not supersingular and therefore does not belong to $Z'$. Therefore $\tilde{F}$ is identically zero on $Z'$. In particular, $\tilde{F}$ vanishes at each supersingular point. On $Z$, the restriction of $\tilde{F}$ has at least $g_{\mathfrak{p}}+1$ zeroes and at most a pole at $P$ of order $c$. Since the degree of the divisor of a function is zero, it follows that $g_{\mathfrak{p}}+1 \leq c$.

It suffices now to notice that such a point is not an osculation point of the curve. Either one of the cusps of $X_0(\mathfrak{p})$ satisfies the conditions on the point $P$ above. Thus $X_0(\mathfrak{p})$ has a point that is not an osculation point, and the result follows.
\end{proof}

\begin{remark}
The requirement that $\mathfrak{p}$ be generated by a prime polynomial of degree at least $3$ ensures that $X_0(\mathfrak{p})$ has genus at least 2. (See equation \ref{genusformula} for an expression giving the genus of $X_0(\mathfrak{p})$ as it depends on the degree $d$ of the prime polynomial generating $\mathfrak{p}$ and Remark \ref{genusconstraint} for an explanation of the requirement that the genus be at least 2.)
\end{remark}

It is immediate from the proof of Proposition \ref{armana} above that the $K$-rational Weierstrass points of $X_0(\mathfrak{p})$ have supersingular reduction modulo $\mathfrak{p}$. A stronger result can be deduced using the following theorem:

\begin{theorem}[Baker \cite{baker}]\label{baker}
Let $R$ be a complete discrete valuation ring with algebraically closed residue field $k$. Let $X$ be a smooth, proper, geometrically connected curve defined over the fraction field of $R$, and denote by $\mathfrak{X}$ a proper model for $X$ over $R$. (In other words, $\mathfrak{X}$ is a proper flat scheme over $\operatorname{Spec} R$ such that its generic fiber is $X$.) Suppose that the special fiber of $\mathfrak{X}$ consists of two genus $0$ curves intersecting transversally at $3$ or more points. Then every Weierstrass point of $X$ defined over the fraction field of $R$ specializes to a singular point of the special fiber of $\mathfrak{X}$.
\end{theorem}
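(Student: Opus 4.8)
The plan is to deduce the statement from Baker's specialization lemma relating linear systems on $X$ to linear systems on the dual graph of a regular semistable model, after first arranging such a model. Since the special fiber of $\mathfrak{X}$ is reduced and consists of two smooth genus-$0$ curves $C_1,C_2$ meeting transversally at some $n\geq 3$ points, the arithmetic surface $\mathfrak{X}$ is already regular away from those $n$ points and has an $A_{e-1}$-singularity (local equation $xy=(\mathrm{unit})\pi^e$) at each of them. Resolving these singularities yields a regular semistable model $\mathfrak{X}'\to\mathfrak{X}$ whose special fiber is $C_1\cup C_2$ together with chains of projective lines inserted at the $n$ nodes; its dual graph $\Gamma'$ is the banana graph on $n$ edges (two vertices $u_1,u_2$ corresponding to $C_1,C_2$, joined by $n$ internally disjoint paths), possibly with the edges subdivided. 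Because every component of $\mathfrak{X}'_s$ has genus $0$, the genus $g$ of $X$ equals $h^1(\Gamma')=n-1$; in particular $g\geq 2$, which is exactly why the hypothesis requires at least three intersection points. By the valuative criterion of properness a $K$-rational point $P$ of $X$ extends uniquely to a section of $\mathfrak{X}'$ over $\Spec R$, so it reduces to a smooth point of a single component, giving a vertex $\tau(P)$ of $\Gamma'$; and since $\mathfrak{X}'\to\mathfrak{X}$ is an isomorphism away from the nodes, $P$ reduces to a smooth point of $C_1$ or $C_2$ on $\mathfrak{X}$ if and only if $\tau(P)\in\{u_1,u_2\}$. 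It therefore suffices to show that a Weierstrass point cannot reduce to $u_1$ or $u_2$.

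Next I would invoke the specialization lemma of \cite{baker}: the specialization map $\tau_*$ from divisors on $X$ to divisors on $\Gamma'$ preserves degree, carries effective divisors to effective divisors, and satisfies $r_{\Gamma'}(\tau_*(D))\geq r_X(D)$ for every divisor $D$, where $r_X(D)=\dim|D|$ and $r_{\Gamma'}$ is the Baker--Norine rank on the graph side. A $K$-rational Weierstrass point $P$ admits a nonzero rational function whose pole divisor is bounded by $g\cdot P$, so $r_X(g\cdot P)\geq 1$; applying the lemma with $D=g\cdot P$ and noting $\tau_*(g\cdot P)=g\cdot\tau(P)$ gives $r_{\Gamma'}\bigl(g\cdot\tau(P)\bigr)\geq 1$.

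It remains to check, by an elementary chip-firing computation, that $r_{\Gamma'}(g\cdot u_i)=0$ for $i=1,2$, which contradicts the previous inequality and forces $\tau(P)\notin\{u_1,u_2\}$. Run Dhar's burning algorithm on the divisor $D_0=(n-1)u_1-u_2$ starting from $u_2$: the fire passes through every degree-$2$ vertex that subdivides an edge (there are no chips there, so it ignites as soon as one incident edge burns) and hence reaches $u_1$ along all $n$ of the paths at once; since $\deg_{\Gamma'}(u_1)=n>n-1=D_0(u_1)$, the vertex $u_1$ ignites and the whole graph burns, so $D_0$ is $u_2$-reduced. Its coefficient at $u_2$ being $-1<0$, $D_0$ is not linearly equivalent to an effective divisor, so $r_{\Gamma'}\bigl((n-1)u_1\bigr)=0$; the vertex $u_2$ is handled symmetrically.

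The main obstacle is the specialization lemma itself, which is the substantive content of Baker's paper: its proof runs through the N\'eron model of the Jacobian of $X$ and Raynaud's identification of the component group with the Jacobian of the dual graph, together with a semicontinuity argument, and the deduction above is empty without it. The one remaining subtlety is the reduction from a general proper model to a regular semistable one and the verification that ``$P$ reduces to a node of $\mathfrak{X}$'' means exactly ``$\tau(P)\notin\{u_1,u_2\}$'' — routine, but this is where all the hypotheses on the special fiber (reducedness, transversality, exactly two rational components, at least three nodes) actually enter. The graph-theoretic step is short by comparison.
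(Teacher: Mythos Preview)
Your proposal is correct and follows precisely the approach the paper indicates: the paper does not prove this cited result but remarks in one sentence that it is a corollary of Baker's Specialization Lemma, which ``roughly says that the dimension of a linear system can only increase under specialization from the curve $X$ to the dual graph of the model $\mathfrak{X}$.'' You have supplied exactly that deduction in full detail --- passing to a regular semistable model, identifying the dual graph as a (subdivided) banana graph with $g=n-1$, applying the specialization inequality to $g\cdot P$, and ruling out $\tau(P)\in\{u_1,u_2\}$ via Dhar's algorithm --- so there is nothing to correct or contrast.
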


The proof of this Theorem is a corollary of a Specialization Lemma proved in the same paper, which roughly says that the dimension of a linear system can only increase under specialization from the curve $X$ to the dual graph of the model $\mathfrak{X}$. 

Let $K_{\mathfrak{p}}^{un}$ denote the maximal unramified extension of the field $K_{\mathfrak{p}}$, where $K_{\mathfrak{p}}$ the completion of $K$ at $\mathfrak{p}$. Baker's theorem implies that the $K_{\mathfrak{p}}^{un}$-rational Weierstrass points of $X_0(\mathfrak{p})$ have supersingular reduction modulo $\mathfrak{p}$ since $\overline{M}_0(\mathfrak{p})$ satisfies the hypothesis of the theorem when considered as a scheme over the ring of integers of $K_{\mathfrak{p}}^{un}$.


\subsection{The modular Wronskian}\label{modularwronskian}

It is natural to ask whether it is possible to say more about the connection between the supersingular locus at $\mathfrak{p}$ and the Weierstrass points of $X_0(\mathfrak{p})$, as was done in the classical case by Rohrlich \cite{rohrlich2}, and Ahlgren and Ono \cite{ahlgrenono}. To refine the connection, we now develop the ideas of Section \ref{theory} for the curve $X_0(\mathfrak{p})$ over $C$ using Drinfeld modular forms, as Rohrlich did in the classical setting. 

We consider the rigid analytic structure on $X_0(\mathfrak{p})$, so that we can compute with Drinfeld modular forms. For ease of reading, we will continue to write analytic below to mean rigid analytic. An analytic function without poles will be a holomorphic function, and an analytic function possibly with poles will be said to be meromorphic.

We first note that GAGA theorems hold for rigid analytic geometry \cite{kiehl1}, \cite{kiehl2}. More precisely, we will need the following: Let $X$ be a smooth projective algebraic curve defined over a complete non-archimedean field $k$ of positive characteristic $p$, and let $X^{an}$ be the rigid analytic space associated to $X$. (See for example \cite{fresnel} for the construction of $X^{an}$). Then there is an equivalence of category between the algebraic coherent sheaves on $X$ and the analytic coherent sheaves on $X^{an}$. Using this correspondence we will associate to an algebraic coherent sheaf $F$ on $X$ an analytic coherent sheaf denoted $F^{an}$ on $X^{an}$.

We note that the sets of points (throughout we will consider only $C$-valued points) of $X$ and $X^{an}$ coincide, so that we will not make a distinction between a divisor on $X$ and a divisor on $X^{an}$. We denote by $O$ the sheaf of algebraic regular functions on $X$, and by $\mathcal{O}$ the sheaf of holomorphic functions on $X^{an}$.

The linear space $L(D)$ associated to a divisor $D$ on $X$ is the space of global sections of an algebraic sheaf which we will also denote by $L(D)$. The sheaf $L(D)$ is coherent and thus corresponds to a sheaf $L(D)^{an}$ on $X^{an}$.

Because the operation $*^{an}$ commutes with duals and tensor products, $L(D)^{an}$ is none other than $\mathcal{L}(D)$, the subsheaf of meromorphic functions $\mathcal{M}$ on $X^{an}$ such that for $U$ an open set of $X^{an}$ we have
\begin{equation*}
\mathcal{L}(D)(U)= \{ f \in \mathcal{M}(U) \mid  [f] \geq -D|_{U} \} \cup \{0\}.
\end{equation*}

In particular, by GAGA, the space of global sections of $L(D)$ is isomorphic to the space of global sections of $\mathcal{L}(D)$, and for a point $P$ of $X^{an}$ we may instead consider the sequence of spaces
\begin{equation*}
k= \mathcal{L}(0)(X^{an}) \subseteq \mathcal{L}(P)(X^{an}) \subseteq \mathcal{L}(2P)(X^{an}) \subseteq \mathcal{L}(3P)(X^{an}) \subseteq \ldots
\end{equation*}
Then $L((n-1)P)(X)=L(nP)(X)$ if and only if $\mathcal{L}((n-1)P)(X^{an})=\mathcal{L}(nP)(X^{an})$, so that the gap sequences can be computed analytically. 

Denote by $C_{can}$ a canonical divisor on $X$. Arguing as in the algebraic case, if $j$ is a canonical order at $P$, there is $F \in \mathcal{L}(C_{can})(X^{an})$ such that $v_P(F)=j-v_P(C_{can})$. 

We now start our work on $X_0(\mathfrak{p})$ in earnest. Our task now is to define a Drinfeld modular form $W(z)$ for $\Gamma_0(\mathfrak{p})$ whose divisor will capture information about the Weierstrass points of $X_0(\mathfrak{p})$. We note that since the cusps of $X_0(\mathfrak{p})$ are not Weierstrass points, to obtain our main result it is enough to consider the divisor of $W(z)$ away from the cusps. In Section \ref{divisorcusp}, we will collect what we know of the divisor of $W(z)$ at the cusp $\infty$. We recall that $Y_0(\mathfrak{p})$ denotes the affine curve whose $C$-points are exactly those of $X_0(\mathfrak{p})$, but with the cusps excluded.


We first compute the divisor of $dz$ away from the cusps, where $z$ is a parameter on $\Omega$: Let $P \in Y_0(\mathfrak{p})$, and choose $\tau \in \Omega$ to be a representative of $P$ in the Drinfeld upper half-plane. Throughout we write $e_{\tau}$ for the order of the stabilizer of $\tau$ in 
\begin{equation*}
\widetilde{\Gamma_0}(\mathfrak{p})=\Gamma_0(\mathfrak{p})/\Gamma_0(\mathfrak{p}) \cap Z(\Gl_2(A)).
\end{equation*}
Then we may choose $t=(z-\tau)^{e_{\tau}}$ as an analytic parameter at $P$. We have 
\begin{equation*}
dz=\frac{1}{e_{\tau}}t^{(e_{\tau}-1)/e_{\tau}}dt
\end{equation*}
($e_{\tau}$ is either 1 or $q+1$ \cite{gekelerjacobian} so it is prime to the characteristic $p$ of $C$) and so $dz$ has a pole of order 
\begin{equation*}
\frac{e_{\tau}-1}{e_{\tau}}
\end{equation*}
at $\tau$.

\begin{proposition}\label{goodbasis}
Let $P$ be a point on $Y_0(\mathfrak{p})$, and write $j_0(P)=0$, and $(j_1(P), \ldots, j_{g_{\mathfrak{p}}-1}(P))$ for the canonical orders at $P$. Choose $\tau \in \Omega$ to be a representative of $P$ in the Drinfeld upper half-plane, and write $e_{\tau}$ for the order of the stabilizer of $\tau$ in $\widetilde{\Gamma_0}(\mathfrak{p})$. Then there is a basis $\{f_i\}_{i=0}^{g_{\mathfrak{p}}-1}$ of $M^2_{2,1}(\Gamma_0(\mathfrak{p}))$ such that:
\begin{equation*}
\order_{\tau}(f_i)= e_{\tau}j_i(P)+e_{\tau}-1.
\end{equation*}
for each $i$.
\end{proposition}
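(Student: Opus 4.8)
The plan is to translate the algebraic statement about the canonical osculating filtration into the language of Drinfeld double cusp forms, using the dictionary already set up in this section. Recall that the map $f \mapsto f(z)\,dz$ identifies $M^2_{2,1}(\Gamma_0(\mathfrak{p}))$ with the space of regular differentials on $X_0(\mathfrak{p})$, equivalently with $L(C_{can})$ after choosing a canonical divisor $C_{can}$, which under GAGA becomes $\mathcal{L}(C_{can})(X^{an})$. So a basis of $M^2_{2,1}(\Gamma_0(\mathfrak{p}))$ corresponds to a basis of $L(C_{can})$, and the task is to show that one can choose the basis so that the orders of vanishing at the chosen point $P$ (measured as forms, at the representative $\tau$) realize exactly the canonical osculating filtration, shifted by the $e_\tau$-factors coming from the parameter $t = (z-\tau)^{e_\tau}$.

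The key steps, in order, would be: (1) Fix $C_{can}$ to be the divisor of the differential $f_{\mathrm{base}}(z)\,dz$ for some nonzero $f_{\mathrm{base}} \in M^2_{2,1}(\Gamma_0(\mathfrak{p}))$; then identification of $M^2_{2,1}(\Gamma_0(\mathfrak{p}))$ with $L(C_{can})$ is $f \mapsto f/f_{\mathrm{base}}$. (2) Recall (from Section \ref{theory}) that the canonical orders $j_i(P)$ are characterized by the filtration $\mathcal{L}(C_{can})(X^{an}) \supseteq \mathcal{L}(C_{can}-P)(X^{an}) \supseteq \mathcal{L}(C_{can}-2P)(X^{an}) \supseteq \cdots$ dropping dimension exactly at the steps $n = j_i(P)$, with $j_0(P)=0$; equivalently, the set of values $\{v_P(F) + v_P(C_{can}) : F \in L(C_{can}), F \neq 0\}$ is exactly $\{j_0(P), \ldots, j_{g_{\mathfrak{p}}-1}(P)\}$. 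Choose $F_0, \ldots, F_{g_{\mathfrak{p}}-1} \in L(C_{can})$ with $v_P(F_i) + v_P(C_{can}) = j_i(P)$; these are automatically linearly independent (distinct orders) hence a basis. (3) Set $f_i = F_i\, f_{\mathrm{base}}$, a basis of $M^2_{2,1}(\Gamma_0(\mathfrak{p}))$. Now translate $v_P$ on $X_0(\mathfrak{p})$ (valuation with respect to the parameter $t$) into $\order_\tau$ on forms on $\Omega$. We computed above that $dz$ has a pole of order $(e_\tau-1)/e_\tau$ at $\tau$, i.e. $v_P(dz) = -(e_\tau-1)$ once measured in the integral parameter $t$ (since $dz = \frac{1}{e_\tau} t^{(e_\tau-1)/e_\tau}\,dt$, so $v_P(dz\text{ relative to }t) = e_\tau - 1$ after accounting for the $e_\tau$-fold ramification; more precisely $\order_\tau(f\,dz) = e_\tau v_P$ of the corresponding point, and $\order_\tau(f) = e_\tau \cdot(\text{order of } f(z)\,dz \text{ as a section at }P) + (e_\tau - 1)$, the $+(e_\tau-1)$ coming from the zero of $dz$). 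Since $f_i(z)\,dz$ corresponds to the global section of $\mathcal{L}(C_{can})$ given by $F_i$, its order of vanishing at $P$ as a differential is $v_P(F_i) + v_P(C_{can}) = j_i(P)$, and therefore $\order_\tau(f_i) = e_\tau j_i(P) + e_\tau - 1$, as claimed.

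The main obstacle I expect is step (3): carefully reconciling the three different notions of order — the order $\order_\tau$ of a Drinfeld modular form as an analytic function of $z$ on $\Omega$ at the point $\tau$, the order of the associated differential $f(z)\,dz$ as a section of the canonical sheaf at the point $P$ of $X_0(\mathfrak{p})$, and the valuation $v_P$ with respect to the chosen integral parameter $t = (z-\tau)^{e_\tau}$ — and tracking precisely the contribution of the stabilizer order $e_\tau$. This is exactly the local computation sketched just before the proposition; the proof amounts to doing that bookkeeping correctly and invariantly. One should also note that when $d$ is odd there are no elliptic points so $e_\tau = 1$ throughout and the formula collapses to $\order_\tau(f_i) = j_i(P)$, which is a useful consistency check; when $d$ is even the only subtlety is at the two elliptic points, where $e_\tau = q+1$, still prime to $p$, so $t$ is a genuine local parameter and the Hasse-derivative machinery and order computations go through unchanged. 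Everything else — existence of $f_{\mathrm{base}}$, the dimension count $\dim M^2_{2,1}(\Gamma_0(\mathfrak{p})) = g_{\mathfrak{p}}$, linear independence from distinct orders — is immediate from results already cited.
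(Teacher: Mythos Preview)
Your approach is correct and essentially identical to the paper's: both identify $M^2_{2,1}(\Gamma_0(\mathfrak{p}))$ with the space of regular differentials, pick a basis realizing the canonical orders at $P$, and then convert $\order_P$ to $\order_\tau$ via the local parameter $t=(z-\tau)^{e_\tau}$. The only cosmetic difference is that the paper takes $C_{can}=[ds]$ for $s$ a local parameter at $P$ (so $v_P(C_{can})=0$, making the bookkeeping slightly cleaner), and note that your parenthetical in step (3) has a sign slip---$dz$ has a \emph{pole} of order $(e_\tau-1)/e_\tau$, not a zero---though your final formula $\order_\tau(f_i)=e_\tau j_i(P)+e_\tau-1$ is correct.
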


\begin{proof}
Fix a point $P$ on $Y_0(\mathfrak{p})$, and let $s$ be a parameter at $P$. We choose as our canonical divisor the divisor $[ds]$. There is a basis $\{F_0, \ldots, F_{g_{\mathfrak{p}}-1}\}$ of $\mathcal{L}([ds])$ such that $\order_P(F_i)=j_i(P)$. Furthermore, $\{F_i ds \}$ is a basis for the space of analytic regular differentials $H^0(X_0(\mathfrak{p})^{an}, \Omega^1_{an})$ on $X_0(\mathfrak{p})$. Because of the correspondence between the space $M^2_{2,1}(\Gamma_0(\mathfrak{p}))$ of double cusp forms of weight $2$ and type $1$ for $\Gamma_0(\mathfrak{p})$ and the space of analytic regular differentials on $X_0(\mathfrak{p})$, we have that there is a basis $\{f_i\}$ for $M^2_{2,1}(\Gamma_0(\mathfrak{p}))$ such that $f_i(z)dz=F_ids$. In particular, $\order_P(f_i(z)dz)=\order_P(F_ids)=\order_P(F_i)=j_i(P)$. 

We now use the fact that for $P\in Y_0(\mathfrak{p})$, $\tau \in \Omega$ a representative of $P$ in the Drinfeld upper half-plane and a Drinfeld modular form $f$, we have
\begin{equation*}
\order_P(f) = \frac{\order_{\tau}(f)}{e_{\tau}}.
\end{equation*}

Then 
\begin{equation*}
\order_P(f_i(z)dz)=\order_P(f_i) + \order_P(dz) = \frac{\order_{\tau}(f_i)}{e_{\tau}}-\frac{e_{\tau}-1}{e_{\tau}},
\end{equation*}
and the result follows.
\end{proof}

\begin{definition}
For any basis $\{f_0, f_1,\ldots f_{g_{\mathfrak{p}}-1}\}$ of $M^{2}_{2,1}(\Gamma_0(\mathfrak{p}))$, we define
\begin{equation*}
W(f_0, \ldots, f_{g_{\mathfrak{p}}-1}) =
\begin{vmatrix}
f_0(z) & D_1(f_0(z)) &\ldots &D_{g_{\mathfrak{p}}-1}(f_0(z))\\
\vdots & & \vdots \\
 f_{g_{\mathfrak{p}}-1}(z) & D_{1}(f_{g_{\mathfrak{p}}-1}(z)) & \ldots & D_{g_{\mathfrak{p}}-1}(f_{g_{\mathfrak{p}}-1}(z))
\end{vmatrix},
\end{equation*}
where $D_n$ is the normalized Hasse derivative introduced in Section \ref{quasimodular}. This is a modular form of weight $g_{\mathfrak{p}}(g_{\mathfrak{p}}+1)$ and type $\frac{g_{\mathfrak{p}}(g_{\mathfrak{p}}+1)}{2}$ for $\Gamma_0(\mathfrak{p})$. 
\end{definition}

If $\{f_0, \ldots f_{g_{\mathfrak{p}}-1}\}$ and $\{f'_0, \ldots f'_{g_{\mathfrak{p}}-1}\}$ are two bases for $M^{2}_{2,1}(\Gamma_0(\mathfrak{p}))$, then $W(f_0, \ldots, f_{g_{\mathfrak{p}}-1})=aW(f'_0, \ldots, f'_{g_{\mathfrak{p}}-1})$ for $0 \neq a \in C$.

\begin{lemma}\label{rationalityintegrality}
There exists a basis $\{f_0, \ldots, f_{g_{\mathfrak{p}}-1}\}$ of $M_{2,1}^2(\Gamma_0(\mathfrak{p}))$ with integral $u$-series coefficients at $\infty$ such that $W(f_0, \ldots, f_{g_{\mathfrak{p}}-1})$ has rational, $\mathfrak{p}$-integral $u$-series coefficients at $\infty$ and
\begin{equation*}
v_{\mathfrak{p}}(W(f_0, \ldots, f_{g_{\mathfrak{p}}-1})) =0.
\end{equation*}
\end{lemma}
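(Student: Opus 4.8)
The plan is to produce the desired basis in two stages: first arrange $\mathfrak{p}$-integrality of the Wronskian, then arrange that its $\mathfrak{p}$-valuation is exactly $0$.

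\medskip

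\noindent\textbf{Step 1: A basis with integral $u$-series coefficients at $\infty$.} By Remark \ref{xnotintegrality}, the space $M^2_{2,1}(\Gamma_0(\mathfrak{p}))$ has a basis of forms with integral $u$-series coefficients at $\infty$; fix such a basis $\{f_0, \ldots, f_{g_{\mathfrak{p}}-1}\}$. Since each $f_i$ is a (double) cusp form, $\order_{\infty}(f_i) \geq 2$, and Proposition \ref{formuladn} shows that $D_n(f_i)$ again has $u$-series expansion starting at $u^2$ with coefficients in $K$; moreover, since $g_{\mathfrak{p}} - 1 < q^d$ (this is clear from the genus formula (\ref{genusformula})), Corollary \ref{derivativesok2} applies with $e = d$ and guarantees that each $D_n(f_i)$ for $0 \le n \le g_{\mathfrak{p}}-1$ has $\mathfrak{p}$-integral $u$-series coefficients. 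The Wronskian $W(f_0, \ldots, f_{g_{\mathfrak{p}}-1})$ is a polynomial (with integer coefficients) in the $u$-series coefficients of the $D_n(f_i)$, hence it too has $\mathfrak{p}$-integral (and rational) $u$-series coefficients at $\infty$. Thus $v_{\mathfrak{p}}(W(f_0, \ldots, f_{g_{\mathfrak{p}}-1})) \geq 0$ for any integral basis.

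\medskip

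\noindent\textbf{Step 2: Achieving $v_{\mathfrak{p}} = 0$ by a base change.} If the chosen basis already gives $v_{\mathfrak{p}}(W) = 0$ we are done, so suppose $v_{\mathfrak{p}}(W) = m > 0$. The idea is that $W$ is not identically zero (it is a nonzero modular form, as recorded after the definition, since the $f_i$ are linearly independent and the Wronskian criterion detects this), so reducing modulo $\mathfrak{p}$ kills too much, and we fix this by changing basis over the residue field. Reduce the $f_i$ modulo $\mathfrak{p}$: these reductions $\bar f_i$ lie in the $\overline{\mathbb{F}}_{\mathfrak{p}}$-vector space spanned by the reductions, and the condition $v_{\mathfrak{p}}(W) > 0$ means precisely that the Wronskian determinant of $\bar f_0, \ldots, \bar f_{g_{\mathfrak{p}}-1}$ (with respect to the $D_n$, which commute with reduction modulo $\mathfrak{p}$ in this range by Corollary \ref{derivativesok}) vanishes identically. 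However, the $\bar f_i$ need not be linearly independent over $\overline{\mathbb{F}}_{\mathfrak{p}}$ --- the integral lattice $M^2_{2,1}(\Gamma_0(\mathfrak{p}))_A$ might reduce to a space of smaller dimension, or rather one must work with a basis of the reduction lattice. The correct move is: choose the integral basis $\{f_i\}$ to reduce to a \emph{basis} of the $\overline{\mathbb{F}}_{\mathfrak{p}}$-vector space $M^2_{2,1}(\Gamma_0(\mathfrak{p}))_A \otimes_A \overline{\mathbb{F}}_{\mathfrak{p}}$ (equivalently, the $f_i$ form an $A_{(\mathfrak{p})}$-basis of the localized lattice). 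Then the reductions $\bar f_i$ are $\overline{\mathbb{F}}_{\mathfrak{p}}$-linearly independent Drinfeld modular forms of weight $2$ and type $1$ for $\Gamma_0(\mathfrak{p})$ over $\overline{\mathbb{F}}_{\mathfrak{p}}$, and their Wronskian with respect to the $D_n$ is a nonzero form, because over the function field $\overline{\mathbb{F}}_{\mathfrak{p}}(X_0(\mathfrak{p}) \bmod \mathfrak{p})$ the Wronskian criterion (Section \ref{theory}, applied to the special fiber, or to either component) detects their linear independence --- and here one uses that the $D_n$ are the Hasse derivatives, which is exactly the setting in which the Wronskian criterion is valid in positive characteristic. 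Therefore $\overline{W(f_0, \ldots, f_{g_{\mathfrak{p}}-1})} \neq 0$, i.e. $v_{\mathfrak{p}}(W(f_0, \ldots, f_{g_{\mathfrak{p}}-1})) = 0$, as desired.

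\medskip

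\noindent\textbf{Main obstacle.} The delicate point is Step 2: one must know that the Wronskian (formed with the $\{D_n\}$) of a linearly independent family of Drinfeld modular forms over $\overline{\mathbb{F}}_{\mathfrak{p}}$ is itself nonzero, or equivalently that $M^2_{2,1}$ over the residue field injects, via the Wronskian construction, faithfully into functions on the special fiber. This is not automatic in characteristic $p$: the ordinary Wronskian (with iterated ordinary derivatives) of linearly independent functions can vanish, which is precisely why one uses Hasse derivatives. The resolution is that the $D_n$ here are exactly the Hasse/hyperderivatives, and $f \mapsto f(z)\,dz$ identifies $M^2_{2,1}$ with regular differentials on $X_0(\mathfrak{p})$ (over $A$, by Remark \ref{xnotintegrality}); so the Wronskian in question is, up to the explicit twisting factors in (\ref{wronsk}), the St\"ohr--Voloch Wronskian $W(\phi, s)$ of Section \ref{theory} applied to the reduced curve. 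Since $W(\phi, s)$ is a \emph{nonzero} rational function for any basis $\phi$ of $L(C)$ on any smooth curve (this is recorded in Section \ref{theory}), nonvanishing of the reduction follows. One must also check the auxiliary numerical fact $g_{\mathfrak{p}} - 1 < q^d$ so that Corollaries \ref{derivativesok2} and \ref{derivativesok} apply throughout; this is immediate from (\ref{genusformula}).
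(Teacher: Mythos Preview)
Your Step~1 matches the paper exactly. In Step~2 you take a slightly different organizational route: rather than the paper's iterative descent (find a linear relation mod~$\mathfrak{p}$, use it to lower $v_{\mathfrak{p}}(W)$, repeat), you choose once and for all a basis that reduces to an $\mathbb{F}_{\mathfrak{p}}$-basis and argue directly that its Wronskian is nonzero mod~$\mathfrak{p}$. These two arguments are logically equivalent: the paper's key assertion (``if $v_{\mathfrak{p}}(W)>0$ then there is a linear dependence mod~$\mathfrak{p}$'') is precisely the contrapositive of your key assertion (``if the reductions are linearly independent then the Wronskian is nonzero mod~$\mathfrak{p}$''), and the iterative descent terminates exactly when one reaches a basis whose reductions are independent.

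There is, however, a genuine problem with your \emph{justification} of this assertion. You invoke the St\"ohr--Voloch Wronskian $W(\phi,s)$ of Section~\ref{theory} on the reduced curve, but the special fiber of $\overline{M}_0(\mathfrak{p})$ at~$\mathfrak{p}$ is \emph{not smooth}: by Theorem~\ref{modulischeme} and the discussion following it, it consists of two copies of $X_0(1)\cong\mathbb{P}^1$ meeting transversally at the supersingular points. The theory in Section~\ref{theory} is developed only for smooth curves, so it does not apply. Your parenthetical fallback ``or to either component'' does not rescue the argument either: each component has genus~$0$ and carries no nonzero regular differential, so the $\bar f_i\,dz$ do not land in $L(C)$ on a smooth $\mathbb{P}^1$; they are sections of the dualizing sheaf of the nodal curve, which is a different object. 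Moreover, recall that in positive characteristic the \emph{naive} Hasse Wronskian $\det(\mathfrak{D}^{(j)}f_i)_{0\le j\le g-1}$ can vanish on linearly independent functions --- this is exactly why Section~\ref{theory} passes to the order sequence $J(\phi,s)$ --- so nonvanishing really does require an argument. The paper simply asserts the implication and proceeds; if you are content to do the same, your argument goes through and is equivalent to the paper's, but the geometric justification you offer for it is not correct as stated.
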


\begin{proof}
By Remark \ref{xnotintegrality}, there is a basis $\{f_1, \ldots, f_{g_{\mathfrak{p}}}\}$ for the space $M^{2}_{2,1}(\Gamma_0(\mathfrak{p}))$ that has integral $u$-series coefficients at $\infty$. 

When computing $W(f_1, \ldots, f_{g_{\mathfrak{p}}})$, we will compute $D_n$ for $n\leq g_{\mathfrak{p}}-1$. From the explicit formula (\ref{genusformula}), we have $g_{\mathfrak{p}} \leq 2q^{d-2}$, so that $n\leq 2q^{d-2}-1<q^d$. In this case, Proposition \ref{derivativesok2} says that $D_n$ preserves $\mathfrak{p}$-integrality of the $u$-series coefficients, so $W(f_1, \ldots, f_{g_{\mathfrak{p}}})$ has rational, $\mathfrak{p}$-integral $u$-series coefficients.

Suppose that
\begin{equation*}
v_{\mathfrak{p}}(W(f_0, \ldots, f_{g_{\mathfrak{p}}-1})) >0.
\end{equation*}
Then there exist $a_0, \ldots, a_{g_{\mathfrak{p}}-1}$ with each $a_i \in A$ such that 
\begin{equation*}
a_0 f_0 + \ldots + a_{g_{\mathfrak{p}}-1} f_{g_{\mathfrak{p}}-1} \equiv 0 \pmod{\mathfrak{p}},
\end{equation*}
and for at least one $i$ such that $0 \leq i \leq g_{\mathfrak{p}}-1$,
\begin{equation*}
a_i \not \equiv 0 \pmod{\mathfrak{p}}.
\end{equation*} 
Without loss of generality, suppose that 
\begin{equation*}
a_0 \not \equiv 0 \pmod{\mathfrak{p}}.
\end{equation*} 
Then we have
\begin{equation*}
v_{\mathfrak{p}} \left(f_0 + \frac{1}{a_0}\left( a_1 f_1 + \ldots + a_{g_{\mathfrak{p}}-1} f_{g_{\mathfrak{p}}-1}\right)\right) = m > 0,
\end{equation*}
for some $m \in \mathbb{Z}$. Putting
\begin{equation*}
f_0' = \frac{1}{\pi^m}\left(f_0 + \frac{1}{a_0}\left( a_1 f_1 + \ldots + a_{g_{\mathfrak{p}}-1} f_{g_{\mathfrak{p}}-1}\right)\right),
\end{equation*}
we have that $f_0'$ has integral $u$-series coefficients at $\infty$, $W(f_0', f_1, \ldots f_{g_{\mathfrak{p}}-1})$ has rational, $\mathfrak{p}$-integral $u$-series coefficients at $\infty$, and
\begin{equation*}
v_{\mathfrak{p}}(W(f_0, \ldots, f_{g_{\mathfrak{p}}-1})) >v_{\mathfrak{p}}(W(f_0', \ldots, f_{g_{\mathfrak{p}}-1})).
\end{equation*}

If 
\begin{equation*}
v_{\mathfrak{p}}(W(f_0', \ldots, f_{g_{\mathfrak{p}}-1})) > 0,
\end{equation*}
we may repeat the procedure above. We can continue this process until the valuation is $0$.
\end{proof}

\begin{definition}
As a consequence of Lemma \ref{rationalityintegrality}, there is a unique Drinfeld modular form $W(f_0, \ldots, f_{g_{\mathfrak{p}}-1})$ such that 
\begin{equation*}
v_{\mathfrak{p}}(W(f_0, \ldots, f_{g_{\mathfrak{p}}-1})) =0
\end{equation*}
and the leading coefficient of $W(f_0, \ldots, f_{g_{\mathfrak{p}}-1})$ is a power of $\pi$. We denote this form by $W(z)$ and call it the \emph{modular Wronskian} of $X_0(\mathfrak{p})$. \
\end{definition}

We note that the forms $\{f_0, \ldots, f_{g_{\mathfrak{p}}-1}\}$ which give us $W(z)$ can be chosen to have rational, $\mathfrak{p}$-integral $u$-series coefficients at $\infty$.

We are interested in $W(z)$ because of its relation to the Weierstrass points of $X_0(\mathfrak{p})$:

\begin{theorem}\label{T:wronskian}
Let $(n_1, \ldots, n_{g_{\mathfrak{p}}}) = (1, \ldots, g_{\mathfrak{p}})$ denote the canonical gap sequence of $X_0(\mathfrak{p})$, $P$ be a point of $Y_0(\mathfrak{p})$ and $(n_1(P), \ldots, n_{g_{\mathfrak{p}}}(P))$ be the gap sequence at $P$. Then we have
\begin{equation*}
\order_P(W(z)(dz)^{g_{\mathfrak{p}}(g_{\mathfrak{p}}+1)/2})\geq \sum_{i=1}^{g_{\mathfrak{p}}} (n_i(P)-n_i).
\end{equation*}
In addition, when $P$ is not an elliptic point nor a Weierstrass point, we have equality: 
\begin{equation*}
\order_P(W(z)(dz)^{g_{\mathfrak{p}}(g_{\mathfrak{p}}+1)/2})=0.
\end{equation*}
\end{theorem}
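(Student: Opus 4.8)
The plan is to work locally at a fixed point $P \in Y_0(\mathfrak{p})$ and reduce the statement about $\order_P$ of the modular object $W(z)(dz)^{g_{\mathfrak{p}}(g_{\mathfrak{p}}+1)/2}$ to the St\"ohr--Voloch Wronskian $W(\phi, s)$ discussed in Section \ref{theory}. First I would choose a representative $\tau \in \Omega$ of $P$, write $e = e_\tau$ for the order of its stabilizer in $\widetilde{\Gamma_0}(\mathfrak{p})$, and invoke Proposition \ref{goodbasis} to select a basis $\{f_i\}_{i=0}^{g_{\mathfrak{p}}-1}$ of $M^2_{2,1}(\Gamma_0(\mathfrak{p}))$ adapted to $P$, so that $\order_\tau(f_i) = e\,j_i(P) + e - 1$, where $(j_0(P), \ldots, j_{g_{\mathfrak{p}}-1}(P)) = (0, j_1(P), \ldots, j_{g_{\mathfrak{p}}-1}(P))$ are the canonical orders at $P$. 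Since $W(z)$ and $W(f_0, \ldots, f_{g_{\mathfrak{p}}-1})$ differ only by a nonzero scalar in $C$, they have the same divisor on $Y_0(\mathfrak{p})$, so it suffices to compute $\order_P$ of $W(f_0, \ldots, f_{g_{\mathfrak{p}}-1})(z)(dz)^{g_{\mathfrak{p}}(g_{\mathfrak{p}}+1)/2}$.

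The core computation is to match the analytic Wronskian determinant built from the $D_n$ with the algebraic Wronskian $W(\phi, s)$ on $X_0(\mathfrak{p})$. The dictionary is: $F_i := f_i(z)\,dz / ds$ lies in $\mathcal{L}([ds])(X_0(\mathfrak{p})^{an}) = L(C)$ for $C = [ds]$ a canonical divisor (as set up in Section \ref{modularwronskian}), and the $D_n$ are, up to the normalizing constant $(-\tilde\pi)^n$, the Hasse derivatives $\mathfrak{D}_z^{(n)}$ with respect to the parameter $z$ on $\Omega$. Using the chain-rule property for higher derivations (Proposition \ref{uchinosatoh}(5)) together with the standard multilinearity of the determinant, one shows that the determinant $W(f_0,\ldots,f_{g_{\mathfrak{p}}-1})$, as a function on $\Omega$, equals (up to a nonvanishing analytic factor and a power of $(-\tilde\pi)$) the pullback of $W(\phi, s)\,(ds/dz)^{1+\mathrm{(weight\ shift)}}$; the key point is that the sequence of columns that survives in the determinant is exactly $J(\phi, s) = (j_1, \ldots, j_{g_{\mathfrak{p}}-1})$, the canonical orders of $X_0(\mathfrak{p})$, by the uniqueness statement in Section \ref{theory}. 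Then the defining identity for $w$, namely $w = [W(\phi,s)] + g_{\mathfrak{p}}\,C + (j_1+\cdots+j_{g_{\mathfrak{p}}-1})[ds]$, gives $v_P(w) \geq \sum_{i=1}^{g_{\mathfrak{p}}}(n_i(P) - n_i)$ with equality $0$ at non-Weierstrass points. The translation $\order_P = \order_\tau / e_\tau$ together with $\order_P(dz) = (e_\tau - 1)/e_\tau$ (computed just before Proposition \ref{goodbasis}) converts this into the asserted inequality for $\order_P(W(z)(dz)^{g_{\mathfrak{p}}(g_{\mathfrak{p}}+1)/2})$; at a non-elliptic, non-Weierstrass $P$ we have $e_\tau = 1$, $C$ can be taken with $v_P(C) = 0$, $s$ a local parameter so $v_P([ds]) = 0$, and all canonical orders at $P$ coincide with those of $X_0(\mathfrak{p})$, forcing the order to be $0$.

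The main obstacle I expect is bookkeeping the normalizations correctly, in two places. First, one must verify that the weight/type shift of the determinant — $g_{\mathfrak{p}}(g_{\mathfrak{p}}+1)$ and $g_{\mathfrak{p}}(g_{\mathfrak{p}}+1)/2$ — matches $g_{\mathfrak{p}}\cdot 2 + 2(0 + j_1 + \cdots + j_{g_{\mathfrak{p}}-1})$, which uses that in the classical-gap-sequence case $j_i = i$ and $\sum_{i=1}^{g_{\mathfrak{p}}-1} i = g_{\mathfrak{p}}(g_{\mathfrak{p}}-1)/2$; this is exactly where Proposition \ref{armana} (classical gap sequence for $d \geq 3$) enters and makes the bidegree come out right. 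Second, one must be careful that the Hasse derivative $\mathfrak{D}_z^{(n)}$ on $\Omega$ descends compatibly with $\mathfrak{D}_s^{(n)}$ on $k(X_0(\mathfrak{p}))$ under the nonvanishing change of variable $z \mapsto s$: since $z$ is a local parameter at $\tau$ when $e_\tau = 1$ and $s$ is a local parameter at $P$, the chain rule maps $F_{n,i}$ contribute only lower-order terms, so the leading behavior — i.e. the valuation — is preserved, and taking Hasse derivatives never lowers the valuation (as already noted in the effectivity argument for $w$ in Section \ref{theory}). Assembling these gives the inequality in general and equality at generic $P$.
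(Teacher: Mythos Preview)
Your route is genuinely different from the paper's. The paper does not attempt to identify $W(f_0,\ldots,f_{g_{\mathfrak{p}}-1})$ with a pullback of the algebraic St\"ohr--Voloch Wronskian $W(\phi,s)$; instead it argues entrywise on $\Omega$. With the adapted basis from Proposition~\ref{goodbasis} in hand, Proposition~\ref{orderomega} gives $\order_\tau(D_k(f_l)) \geq e_\tau j_l(P) + e_\tau - 1 - k$ for every entry, so each monomial in the Leibniz expansion of the determinant has $\order_\tau \geq \sum_{i=0}^{g_{\mathfrak{p}}-1}(e_\tau j_i(P) - i + e_\tau - 1)$. A short rewriting using $\sum_i(n_i(P)-n_i)=\sum_i(j_i(P)-j_i)$ and $j_i=i$ (Proposition~\ref{armana}) converts this lower bound, after dividing by $e_\tau$ and subtracting $\frac{g_{\mathfrak{p}}(g_{\mathfrak{p}}+1)}{2}\cdot\frac{e_\tau-1}{e_\tau}$ for $[dz]$, into exactly the claimed inequality. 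For the equality at a non-elliptic non-Weierstrass $P$ one has $e_\tau=1$ and $j_i(P)=i$, so the diagonal entries have $\order_\tau=0$ while the strictly sub-diagonal entries have $\order_\tau>0$, and the diagonal term in the determinant survives. This is entirely elementary and never invokes the divisor $w$ or equation~(\ref{wronsk}).

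Your more conceptual approach is sound at non-elliptic points but has a gap at elliptic points. The transformation law~(\ref{wronsk}) you want to invoke compares Wronskians with respect to two separating variables on $k(X_0(\mathfrak{p}))$, and when $e_\tau>1$ the coordinate $z$ does not descend to a function on $X_0(\mathfrak{p})$, so you cannot plug $t=z$ into that identity as written. To salvage the argument you would need to prove an analytic analogue of~(\ref{wronsk}) on $\Omega$ relating Hasse derivatives in $z$ to those in $s=(z-\tau)^{e_\tau}$; the chain-rule property in Proposition~\ref{uchinosatoh} would eventually yield this, but carrying it out amounts to the same entrywise valuation estimate the paper performs directly. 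The paper's approach buys uniformity over elliptic and non-elliptic points with no extra bookkeeping; your approach, once the elliptic case is patched, buys the conceptual payoff of making the phrase ``modular avatar of the divisor $w$'' literal rather than merely suggestive.
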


\begin{proof}
Let $P$ be a point on $Y_0(\mathfrak{p})$, and choose a basis $\{f_i\}$ of $M^2_{2,1}(\Gamma_0(\mathfrak{p}))$ that satisfies the conclusion of Proposition \ref{goodbasis}. We also continue to use the notation introduced in the statement of Proposition \ref{goodbasis}. Then 
\begin{equation*}
\order_P(W(f_0, \ldots, f_{g_{\mathfrak{p}}-1})(dz)^{g_{\mathfrak{p}}(g_{\mathfrak{p}}+1)/2})=\order_P(W(z)(dz)^{g_{\mathfrak{p}}(g_{\mathfrak{p}}+1)/2}),
\end{equation*}
so we may work with $W(f_0, \ldots, f_{g_{\mathfrak{p}}-1})$ for convenience.

Choose $\tau \in \Omega$ to be a representative of $P$ in the Drinfeld upper half-plane. By Proposition \ref{orderomega}, for $k= 0, \ldots, g_{\mathfrak{p}}-1$, we have that
\begin{equation*}
\order_{\tau}(D_{k}(f_l)) \geq e_{\tau} j_l(P)+e_{\tau}-1-k
\end{equation*}
with equality if and only if $\binom{e_{\tau}j_l(P)+e_{\tau}-1}{k}\not \equiv 0 \pmod{p}$. When computing the determinant $W(f_0, \ldots, f_{g_{\mathfrak{p}}-1})$, we will be adding terms all of whose order of vanishing at $\tau$ is $\geq \sum_{i=0}^{g_{\mathfrak{p}}-1}(e_{\tau} j_i(P) - i +e_{\tau}-1)$. Thus
\begin{equation*}
\order_{\tau} W(f_0, \ldots, f_{g_{\mathfrak{p}}-1}) \geq \sum_{i=0}^{g_{\mathfrak{p}}-1}(e_{\tau} j_i(P) - i +e_{\tau}-1).
\end{equation*}

Since $X_0(\mathfrak{p})$ has canonical orders $(j_1, \ldots, j_{g_{\mathfrak{p}}-1}) = (1, \ldots, g_{\mathfrak{p}}-1)$ and 
\begin{equation*}
\sum_{i=1}^{g_{\mathfrak{p}}} (n_i(P)-n_i)=\sum_{i=1}^{g_{\mathfrak{p}}-1} (j_i(P)-j_i),
\end{equation*}
for any point $P$ on $X_0(\mathfrak{p})$, we have
\begin{equation*}
\sum_{i=0}^{g_{\mathfrak{p}}-1}(e_{\tau} j_i(P) - i +e_{\tau}-1)= e_{\tau}\sum_{i=1}^{g_{\mathfrak{p}}} (n_i(P)-n_i)+\frac{g_{\mathfrak{p}}(g_{\mathfrak{p}}+1)}{2}(e_{\tau}-1).
\end{equation*}

Thus 
\begin{align*}
\order_P(W(f_0, \ldots, f_{g_{\mathfrak{p}}-1})(dz)^{g_{\mathfrak{p}}(g_{\mathfrak{p}}+1)/2})&\geq \sum_{i=1}^{g_{\mathfrak{p}}} (n_i(P)-n_i)+\frac{g_{\mathfrak{p}}(g_{\mathfrak{p}}+1)}{2}\frac{e_{\tau}-1}{e_{\tau}}-\frac{g_{\mathfrak{p}}(g_{\mathfrak{p}}+1)}{2}\frac{e_{\tau}-1}{e_{\tau}}\\
&= \sum_{i=1}^{g_{\mathfrak{p}}} (n_i(P)-n_i).
\end{align*}.

In the case where $P$ is not elliptic and $P$ is not a Weierstrass point, the terms on the diagonal of $W(f_0, \ldots, f_{g_{\mathfrak{p}}-1})$ have order of vanishing exactly $0$, and all of the terms below the diagonal have order of vanishing strictly greater than $0$. Thus $\order_{\tau} W(f_0, \ldots, f_{g_{\mathfrak{p}}-1}) =0=\sum_{i=1}^{g_{\mathfrak{p}}} (n_i(P)-n_i)$.

\end{proof}

The significance of the previous theorem is that away from the cusps, the divisor 
\begin{equation*}
[W(z)]+ \frac{g_{\mathfrak{p}}(g_{\mathfrak{p}}+1)}{2}[dz]
\end{equation*}
is the modular avatar of the invariant divisor $w$ constructed by St\"{o}hr and Voloch \cite{stohrvoloch}. Consequently, we make the following definition:
\begin{definition}\label{D:weight}
The (modular) Weierstrass weight of a point $P$ on $Y_0(\mathfrak{p})$ is
\begin{equation*}
\wt(P) = \order_P(W(z)(dz)^{g_{\mathfrak{p}}(g_{\mathfrak{p}}+1)/2}).
\end{equation*}
\end{definition}

Finally, to apply Theorem \ref{normtheorem} we will need:

\begin{proposition}\label{T:qodd}
Suppose that $q$ is odd. Then $W(z)$ is an eigenform of the Fricke involution.
\end{proposition}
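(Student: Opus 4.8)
The plan is to reduce the statement to a formula relating the Fricke slash of the Hasse--Wronskian $W(f_0,\dots,f_{g_{\mathfrak{p}}-1})$ of a basis of $M^2_{2,1}(\Gamma_0(\mathfrak{p}))$ to the Hasse--Wronskian of a new basis, and then to invoke the basis--change behaviour recorded just after the definition of the Wronskian: if $f'_i=\sum_j a_{ij}f_j$ then $W(f'_0,\dots,f'_{g_{\mathfrak{p}}-1})=\det(a_{ij})\,W(f_0,\dots,f_{g_{\mathfrak{p}}-1})$. Since $W(z)$ is a fixed nonzero scalar multiple of $W(f_0,\dots,f_{g_{\mathfrak{p}}-1})$ for any basis, it suffices to produce one basis for which slashing by $W_{\mathfrak{p}}$ returns a scalar multiple of the same Wronskian.

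First I would record two formal facts about the matrix $W_{\mathfrak{p}}=\left(\begin{smallmatrix}0&-1\\\pi&0\end{smallmatrix}\right)$: it normalizes $\Gamma_0(\mathfrak{p})$ (immediate from $W_{\mathfrak{p}}\left(\begin{smallmatrix}a&b\\\pi c'&d\end{smallmatrix}\right)W_{\mathfrak{p}}^{-1}=\left(\begin{smallmatrix}d&-c'\\-\pi b&a\end{smallmatrix}\right)$), and it interchanges the two cusps $\infty$ and $0$ of $X_0(\mathfrak{p})$. Together with the fact that $f|_{k,l}[\gamma]=f$ for $\gamma\in\Gamma_0(\mathfrak{p})$, these show that $\sigma:={}|_{2,1}[W_{\mathfrak{p}}]$ preserves the space $M^2_{2,1}(\Gamma_0(\mathfrak{p}))$ of double cusp forms of weight $2$ and type $1$. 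A short check with the slash operator gives $f|_{2,1}[W_{\mathfrak{p}}]|_{2,1}[W_{\mathfrak{p}}]=f|_{2,1}[W_{\mathfrak{p}}^2]$ and $W_{\mathfrak{p}}^2=\left(\begin{smallmatrix}-\pi&0\\0&-\pi\end{smallmatrix}\right)$ acts as the identity on forms of weight $2$ and type $1$, so $\sigma$ is an involution of $M^2_{2,1}(\Gamma_0(\mathfrak{p}))$. Hence $\{\sigma f_0,\dots,\sigma f_{g_{\mathfrak{p}}-1}\}$ is again a basis; writing $\sigma f_i=\sum_j a_{ij}f_j$ gives $(a_{ij})^2=I$, so $\det(a_{ij})=\pm1$.

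The crux is then the identity
\begin{equation*}
W(\sigma f_0,\dots,\sigma f_{g_{\mathfrak{p}}-1}) = W(f_0,\dots,f_{g_{\mathfrak{p}}-1})\,|_{k,l}\,[W_{\mathfrak{p}}], \qquad k=g_{\mathfrak{p}}(g_{\mathfrak{p}}+1),\quad l=\tfrac{1}{2}g_{\mathfrak{p}}(g_{\mathfrak{p}}+1).
\end{equation*}
To prove it I would expand each $D_n(\sigma f_i)$ via the product rule for the higher derivation $\{D_n\}$ together with the chain--rule formula of Lemma \ref{prop:quasimodular} applied to $W_{\mathfrak{p}}$ --- that formula is stated there for $\Gl_2(A)$, but its proof is the Hasse--derivative chain rule for a fractional linear transformation, hence is valid verbatim for $W_{\mathfrak{p}}\in\Gl_2(K)$. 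This expresses $(D_n(\sigma f_i))_{0\le i,n\le g_{\mathfrak{p}}-1}$ as the product of the matrix $((D_n f_i)(W_{\mathfrak{p}}z))_{i,n}$ with an explicit lower triangular matrix whose $n$-th diagonal entry is $\pi^{-(n+1)}z^{-2(n+1)}$; taking determinants, the triangular factor contributes exactly $\pi^{-k/2}z^{-k}$, which is precisely the automorphy factor attached to $W_{\mathfrak{p}}$ in $|_{k,l}[W_{\mathfrak{p}}]$ (recall $\zeta_{W_{\mathfrak{p}}}=1$ since $\pi$ is monic). Combining with the previous paragraph, $W(f_0,\dots,f_{g_{\mathfrak{p}}-1})\,|_{k,l}\,[W_{\mathfrak{p}}]=\det(a_{ij})\,W(f_0,\dots,f_{g_{\mathfrak{p}}-1})=\pm\,W(f_0,\dots,f_{g_{\mathfrak{p}}-1})$, and scaling by the constant relating this Wronskian to $W(z)$ yields $W(z)\,|_{k,l}\,[W_{\mathfrak{p}}]=\pm W(z)$, i.e. $W(z)$ is an eigenform of the Fricke involution.

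The main obstacle is the bookkeeping concealed in the crux step: one must track the powers of $\pi$ and of the Carlitz period $\tilde{\pi}$ and the leading--coefficient factors $\zeta_{(\cdot)}$ coming from Lemma \ref{prop:quasimodular} and from the definition of the slash operator, and verify that they collapse exactly to the normalized Fricke automorphy factor with no spurious scalar surviving. It is in pushing this ``modular'' computation through cleanly that the hypothesis $q$ odd enters (see the remark following Theorem \ref{T:main}); in even characteristic one expects the conclusion to persist but to need instead a geometric argument, e.g.\ using that the invariant Weierstrass divisor of Section \ref{theory} --- hence $[W(z)]+\tfrac{1}{2}g_{\mathfrak{p}}(g_{\mathfrak{p}}+1)[dz]$ away from the cusps --- is fixed by every automorphism of $X_0(\mathfrak{p})$, together with control of $W(z)$ at the two cusps.
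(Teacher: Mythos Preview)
Your argument is correct and, in fact, cleaner than the paper's in one respect. The paper's proof begins by invoking the hypothesis $q$ odd to diagonalize the Fricke involution on $M^2_{2,1}(\Gamma_0(\mathfrak{p}))$, chooses an eigenbasis $\{f_i\}$ with $f_i|[W_{\mathfrak{p}}]=\lambda_i f_i$, and then carries out the chain--rule/column--reduction computation on that basis, arriving at eigenvalue $\prod_i\lambda_i$. You instead work with an arbitrary basis, prove the identity $W(\sigma f_0,\dots,\sigma f_{g_{\mathfrak{p}}-1})=W(f_0,\dots,f_{g_{\mathfrak{p}}-1})|_{k,l}[W_{\mathfrak{p}}]$ via the same triangular chain--rule structure, and then appeal to the basis--change relation $W(\sigma f_\bullet)=\det(a_{ij})\,W(f_\bullet)$ together with $(a_{ij})^2=I$. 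The core analytic input---Lemma \ref{prop:quasimodular} applied to $W_{\mathfrak{p}}$ and the product rule for $D_n(z^{-2}\cdot(f_i\circ W_{\mathfrak{p}}))$---is the same; what differs is that you replace ``choose an eigenbasis'' by ``change basis afterwards.''

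Two remarks on your write-up. First, your bookkeeping is already right: the diagonal entries $\pi^{-(n+1)}z^{-2(n+1)}$ multiply to $\pi^{-k/2}z^{-k}$, which is exactly the Fricke automorphy factor since $\zeta_{W_{\mathfrak{p}}}=1$; no spurious scalar survives and no powers of $\tilde{\pi}$ leak out (the $j=n$ term of Lemma \ref{prop:quasimodular} carries $(-\tilde{\pi})^{0}=1$). Second, and more interestingly, your hedging about where $q$ odd enters is misplaced: in your approach it does \emph{not} enter. The paper needs $q$ odd only to diagonalize an involution, which can fail in characteristic $2$; your basis--change route sidesteps this entirely, since $(a_{ij})^2=I$ forces $\det(a_{ij})^2=1$ in any characteristic (in characteristic $2$ this even pins down the eigenvalue as $1$). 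So your argument actually proves the proposition without the parity hypothesis---a small improvement the paper's remark after Theorem \ref{T:main} says it expected but could not obtain by its ``modular'' method.
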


\begin{proof}
Since we are in odd characteristic, the Fricke involution is diagonalizable. Let $\{f_1, \ldots, f_{g_{\mathfrak{p}}}\}$ be a basis of eigenforms of $W_{\mathfrak{p}}$ of the space $M^{2}_{2,1}(\Gamma_0(\mathfrak{p}))$, say with $f_i | [W_{\mathfrak{p}}] = \lambda_i f_i$.

We compute
\begin{align*}
W(f_0, \ldots, f_{g_{\mathfrak{p}}-1}) \left( \frac{-1}{\pi z}\right) &=
\begin{vmatrix}
f_0\left( \frac{-1}{\pi z}\right) & (D_1f_0) \left( \frac{-1}{\pi z}\right) &\ldots & (D_{g_{\mathfrak{p}}-1}f_0)\left( \frac{-1}{\pi z}\right)  \\
\vdots & & \vdots \\
 f_{g_{\mathfrak{p}}-1}\left( \frac{-1}{\pi z}\right) & (D_{1} f_{g_{\mathfrak{p}}-1})\left( \frac{-1}{\pi z}\right) & \ldots & (D_{g_{\mathfrak{p}}-1} f_{g_{\mathfrak{p}}-1}) \left( \frac{-1}{\pi z}\right)
\end{vmatrix} \\
& = \begin{vmatrix}
\lambda_0 \pi z^2 f_0(z) &  (D_1f_0) \left( \frac{-1}{\pi z}\right)  &\ldots & (D_{g_{\mathfrak{p}}-1}f_0)\left( \frac{-1}{\pi z}\right)   \\
\vdots & & \vdots \\
\lambda_{g_{\mathfrak{p}}-1} \pi z^2 f_{g_{\mathfrak{p}}-1}(z) & (D_{1} f_{g_{\mathfrak{p}}-1})\left( \frac{-1}{\pi z}\right)  & \ldots & (D_{g_{\mathfrak{p}}-1} f_{g_{\mathfrak{p}}-1}) \left( \frac{-1}{\pi z}\right)
\end{vmatrix}. 
\end{align*}

By Proposition \ref{prop:quasimodular}, we have for each $i$ and $n$:
\begin{equation}\label{eqn:der1}
D_n\left( f_i \left( \frac{-1}{\pi z}\right) \right)= z^{-n} \sum_{j=1}^n (-1)^j \binom{n-1}{n-j} \frac{1}{(\pi z)^j} \frac{1}{(-\tilde{\pi})^{n-j}} (D_j f_i) \left(\frac{-1}{\pi z} \right).
\end{equation}

Furthermore using the product rule we have
\begin{equation}\label{eqn:der2}
D_n \left( \lambda_i \pi z^2 f_i(z)\right) = \lambda_i \pi \left( z^2 (D_n f) (z) +2z (D_{n-1} f)(z) + (D_{n-2} f)(z) \right).
\end{equation}

Combining Equations (\ref{eqn:der1}) and (\ref{eqn:der2}) and using induction on $n$, we obtain that
\begin{equation*}
(D_n f_i) \left( \frac{-1}{\pi z}\right) = (-1)^n \lambda_i \pi^{n+1} z^{2n+2} (D_n f_i)(z) + \lambda_i \left( \sum_{j=0}^{n-1} A_{n,j}(\pi, z) (D_n f_i)(z) \right),
\end{equation*}
where $A_{n,j}$ is a polynomial that depends only on $n$ and $j$. Therefore we may successively add to column $C_n$ linear combinations of earlier columns to obtain
\begin{equation*}
W(f_0, \ldots, f_{g_{\mathfrak{p}}-1}) \left( \frac{-1}{\pi z}\right) = \begin{vmatrix} \lambda_i \pi^{n+1} z^{2n+2} (D_n f_i)(z) \end{vmatrix},
\end{equation*}
where $0 \leq i \leq g_{\mathfrak{p}}-1$ indexes the rows and $0 \leq n \leq g_{\mathfrak{p}}-1$ indexes the columns of the matrix.

Pulling out the constant $\lambda_i$ from each row and $\pi^{n+1} z^{2n+2}$ from each column gives
\begin{equation*}
W(f_0, \ldots, f_{g_{\mathfrak{p}}-1})\left( \frac{-1}{\pi z}\right) = \left( \prod_{i=0}^{g_{\mathfrak{p}}-1} \lambda_i \right) \pi^{g_{\mathfrak{p}}(g_{\mathfrak{p}}+1)/2} z^{g_{\mathfrak{p}}(g_{\mathfrak{p}}+1)} W(f_0, \ldots, f_{g_{\mathfrak{p}}-1})(z).
\end{equation*}
Since $W(z)$ is a constant multiple of $W(f_0, \ldots, f_{g_{\mathfrak{p}}-1})(z)$, we conclude that $W(z)$ is an eigenform of the Fricke involution with eigenvalue $\prod_{i=0}^{g_{\mathfrak{p}}-1} \lambda_i$.
\end{proof}

\section{Proof of Theorem \ref{T:main}}\label{consequences}

We are now in a position to prove our main theorem. 

For simplicity throughout this section we will write
\begin{equation}\label{E:wtilde}
\mathcal{W} = \widetilde{N(W)} = \pi^{q^dk/2} \prod_{\gamma \in \Gamma_0(\mathfrak{p})\backslash \Gl_2(A)} W |_{k,l} ,
\end{equation}
which is the form appearing in the statement of Theorem \ref{normtheorem}. It has weight $(q^d+1)g_{\mathfrak{p}}(g_{\mathfrak{p}} +1)$ and type $g_{\mathfrak{p}}(g_{\mathfrak{p}} +1)$. We also write 
\begin{equation}\label{E:poly}
F_{\mathfrak{p}}(x) \defi \prod_{P \in Y_0(\mathfrak{p})} (x- j(P))^{\wt(P)}.
\end{equation}
We note that this is a polynomial since only finitely many points $P$ have $\wt(P) \neq 0$, where $\wt(P)$ is as in Definition \ref{D:weight}, and that we have excluded the cusps from consideration in this product, so that the quantity $j(P)$ is not infinite. 

The strategy to prove Theorem \ref{T:main} is to relate the companion polynomial of $\mathcal{W}(z)$ to the polynomial $F_{\mathfrak{p}}(x)$. Then applying Theorems \ref{tracetheorem} and \ref{normtheorem} to $W(z)$, we show that $\mathcal{W}$ has lower filtration than weight, and conclude that its divisor is supported on all of the supersingular locus.




\begin{theorem}\label{T:epsilon}
Let $\mathcal{W}(z)$ be as in equation (\ref{E:wtilde}). Let $P(\mathcal{W},x)$ be the companion polynomial of the form $\mathcal{W}(z)$ defined in equation (\ref{jpoly}). Then 
\begin{equation*}
P(\mathcal{W},x) = x^{\epsilon(d)} F_{\mathfrak{p}}(x).
\end{equation*}
for 
\begin{equation*}
\epsilon(d) =
\begin{cases}
\frac{1}{q+1}(qg_{\mathfrak{p}}(g_{\mathfrak{p}}+1) -\gamma((q^d+1)g_{\mathfrak{p}}(g_{\mathfrak{p}}+1), g_{\mathfrak{p}}(g_{\mathfrak{p}}+1)))  & \text{if $d$ is even,}\\
\frac{1}{q+1}\gamma((q^d+1)g_{\mathfrak{p}}(g_{\mathfrak{p}}+1), g_{\mathfrak{p}}(g_{\mathfrak{p}}+1)) & \text{if $d$ is odd.}
\end{cases}
\end{equation*}
\end{theorem}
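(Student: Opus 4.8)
The plan is to compute the companion polynomial $P(\mathcal{W},x)$ by tracking the zeros of $\mathcal{W}(z)$ on $X_0(1)$, using the explicit product formula for $\mathcal{W}$ from equation~(\ref{E:wtilde}) together with the relationship between the divisor of $W(z)$ on $X_0(\mathfrak{p})$ and the Weierstrass weights. First I would recall that for any Drinfeld modular form $f$ of weight $k$ and type $l$ for $\Gl_2(A)$, the factorization $f = g^{\gamma(k,l)} h^{\mu(k,l)} P(f,j)$ of equation~(\ref{jpoly}) means that $P(f,x)$ records exactly the non-trivial zeros of $f$ on $X_0(1)$, i.e. the zeros away from $j=0$ (the elliptic point where $g$ vanishes) and away from $\infty$ (where $h$ vanishes). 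So the task reduces to two things: (i) identify the divisor of $\mathcal{W}$ on $X_0(1)$ away from $j=0$ and $\infty$, and (ii) compute the exponent $\epsilon(d)$ of $x$ by a degree/weight bookkeeping argument.

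For step (i): the form $\mathcal{W}(z) = \pi^{q^d k/2}\prod_{\gamma \in \Gamma_0(\mathfrak{p})\backslash \Gl_2(A)} W|_{k,l}[\gamma]$ is, up to the explicit constant, the norm of $W(z)$ from $\Gamma_0(\mathfrak{p})$ down to $\Gl_2(A)$. Its divisor on $X_0(1)$ is the pushforward (trace) of the divisor of $W(z)$ on $X_0(\mathfrak{p})$ under the covering map $X_0(\mathfrak{p}) \to X_0(1)$, $P \mapsto j(P)$. Now by Theorem~\ref{T:wronskian} and Definition~\ref{D:weight}, on $Y_0(\mathfrak{p})$ the relevant divisor — the St\"ohr--Voloch divisor $[W(z)] + \tfrac{g_{\mathfrak{p}}(g_{\mathfrak{p}}+1)}{2}[dz]$ — has $P$-coefficient equal to $\wt(P)$, which is supported precisely on the Weierstrass points. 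I would need to be careful to separate the genuine divisor of $W(z)$ (as a modular form, i.e. its zeros on $\Omega$) from the twisting by $[dz]^{g_{\mathfrak{p}}(g_{\mathfrak{p}}+1)/2}$ and from the contributions at the elliptic points when $d$ is even; the point is that after pushing forward to $X_0(1)$ and discarding the contributions supported over $j=0$ and over the cusps, what survives is exactly $\prod_{P \in Y_0(\mathfrak{p})} (x - j(P))^{\wt(P)} = F_{\mathfrak{p}}(x)$. This gives $P(\mathcal{W},x) = c\, x^{\epsilon(d)} F_{\mathfrak{p}}(x)$ for some constant $c$ and some nonnegative integer $\epsilon(d)$, where the factor $x^{\epsilon(d)}$ absorbs whatever zeros at $j=0$ are not already accounted for by the trivial factor $g^{\gamma((q^d+1)g_{\mathfrak{p}}(g_{\mathfrak{p}}+1),\,g_{\mathfrak{p}}(g_{\mathfrak{p}}+1))}$.

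For step (ii): to pin down $\epsilon(d)$ I would compare degrees. The total number of zeros of $\mathcal{W}$ on $X_0(1)$, weighted appropriately, is determined by its weight $(q^d+1)g_{\mathfrak{p}}(g_{\mathfrak{p}}+1)$ via the valence formula; subtracting the order of vanishing at $\infty$ (which is $\mu((q^d+1)g_{\mathfrak{p}}(g_{\mathfrak{p}}+1),\,g_{\mathfrak{p}}(g_{\mathfrak{p}}+1))$) and the order at $j=0$ contributed by $g^{\gamma}$ leaves $\deg P(\mathcal{W},x)$. On the other hand $\deg F_{\mathfrak{p}}(x) = \sum_P \wt(P)$, which by the St\"ohr--Voloch theory equals the degree of the Weierstrass divisor $w$ on $X_0(\mathfrak{p})$, namely $(2g_{\mathfrak{p}}-2)\cdot\big(1 + \binom{g_{\mathfrak{p}}}{2}\big)$ (the classical Weierstrass-weight total, valid since by Proposition~\ref{armana} the curve has a classical gap sequence and by Theorem~\ref{T:qodd} one can arrange the generic non-vanishing of the Wronskian determinant). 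Then $\epsilon(d) = \deg P(\mathcal{W},x) - \deg F_{\mathfrak{p}}(x)$, and a direct computation — splitting into the parity cases for $d$, which governs both the genus formula~(\ref{genusformula}) and the value $\gamma(q^d-1,0) \in \{0,1\}$ — should yield the claimed closed forms. Finally, the constant $c$ must equal $1$: both $P(\mathcal{W},x)$ and $x^{\epsilon(d)}F_{\mathfrak{p}}(x)$ are monic, the former because $\mathcal{W}$ is normalized (its leading $u$-coefficient is a power of $\pi$ and the $g$-$h$ factorization is monic in the sense that $g$ has leading term $1$), the latter by construction.

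\textbf{Main obstacle.} The delicate part is step (i): correctly bookkeeping the contributions of the elliptic points and the cusps when passing from the modular-form divisor of $W(z)$ on $\Omega$ to the divisor of its norm $\mathcal{W}$ on $X_0(1)$, and checking that the ``extra'' zeros at $j=0$ are exactly $g^{\gamma} \cdot x^{\epsilon(d)}$ with the stated $\epsilon(d)$ — i.e. that no Weierstrass point of $X_0(\mathfrak{p})$ with $j(P) = 0$ gets double-counted or dropped. This requires knowing the ramification of $X_0(\mathfrak{p}) \to X_0(1)$ over $j = 0$ (governed by the number and stabilizer orders of the elliptic points, which depends on the parity of $d$ as recorded in Section~\ref{xnotp}) and carefully reconciling it with the trivial-zero count $\gamma(k,l)$ built into the companion polynomial formalism. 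The degree computation in step (ii) is then essentially forced, but getting the $j=0$ multiplicities exactly right is where the real work lies.
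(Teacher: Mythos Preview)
Your step (i) is essentially the paper's approach: for a non-elliptic point $\tau_0$ on $X_0(1)$ (i.e.\ $j(\tau_0)\neq 0$), the norm structure of $\mathcal{W}$ together with the identity $\order_\tau W(z)=\wt(P_\tau)$ for non-elliptic $P_\tau$ (which is just $\order_P(dz)=0$ when $e_\tau=1$) gives $\order_{\tau_0}\mathcal{W}=\sum_{P_\tau\mapsto P_{\tau_0}}\wt(P_\tau)$, and this matches the exponent of $(x-j(\tau_0))$ in both $P(\mathcal{W},x)$ and $F_{\mathfrak p}(x)$. So far so good.

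The gap is in step (ii). Your proposed global degree count hinges on two numbers you cannot compute: $\deg P(\mathcal{W},x)$ and $\deg F_{\mathfrak p}(x)$. For the first, $\deg P(\mathcal{W},x)=\bigl(\mu(k,l)-\order_\infty\mathcal{W}\bigr)/(q-1)$, and $\order_\infty\mathcal{W}$ is governed by $\order_\infty W$ and $\order_0 W$ on $X_0(\mathfrak{p})$. For the second, $\deg F_{\mathfrak p}(x)=\sum_{P\in Y_0(\mathfrak p)}\wt(P)$ is the degree of the divisor $[W(z)(dz)^{g_{\mathfrak p}(g_{\mathfrak p}+1)/2}]$ \emph{minus its contribution at the two cusps}; it equals the St\"ohr--Voloch total $g_{\mathfrak p}(g_{\mathfrak p}^2-1)$ only if those cusp contributions vanish. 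But Section~\ref{divisorcusp} is precisely the statement that the order of $W(z)$ at the cusps is not known in general (only a weak lower bound is established, and only for $d=3$). So both inputs to your subtraction are unavailable, and there is no visible cancellation that would rescue the difference.

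The paper sidesteps this entirely by computing $\epsilon(d)$ \emph{locally at $j=0$}. It uses the explicit ramification of $X_0(\mathfrak{p})\to X_0(1)$ over the elliptic point (two unramified elliptic points plus $\tfrac{q^d-1}{q+1}$ points of index $q+1$ when $d$ is even; $\tfrac{q^d+1}{q+1}$ points of index $q+1$ and no elliptic points when $d$ is odd), together with the formula $\order_\tau W(z)=(q+1)\wt(P_\tau)+q\cdot\tfrac{g_{\mathfrak p}(g_{\mathfrak p}+1)}{2}$ at an elliptic $P_\tau$, to read off $\order_{\tau_0}\mathcal{W}$ directly. Comparing with $\order_{\tau_0}\mathcal{W}=\gamma(k,l)+(q+1)M$, where $M$ is the multiplicity of $x$ in $P(\mathcal{W},x)$, yields $M=\epsilon(d)+\sum_{\tau\sim\tau_0}\wt(P_\tau)$ with exactly the claimed $\epsilon(d)$. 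This is the computation you flagged as the ``main obstacle,'' and it is in fact the whole proof --- not a bookkeeping check on the way to a degree argument, but the argument itself.
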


\begin{proof}
Our strategy to relate $P(\mathcal{W},x)$ to $F_{\mathfrak{p}}(x)$ is to first relate the Weierstrass weight $\wt(P)$ of a point to the order of vanishing at $\tau$ of $W(z)$, where $\tau$ is a representative of $P$ in the upper half-plane. We then relate the order of vanishing of $\mathcal{W}(z)$ at $\tau_0 \in \Omega$ to the order of vanishing of $W(z)$ at points $\tau$ that are $\Gl_2(A)$-equivalent to $\tau_0$.

Let $\tau$ be any element of the Drinfeld upper half-plane $\Omega$, and let $P_{\tau}$ be the point on $Y_0(\mathfrak{p})$ corresponding to $\tau$. Further let $e_{\tau}$ be the  order of the stabilizer of $\tau$ in $\widetilde{\Gamma_0}(\mathfrak{p})$. Then we have
\begin{equation*}
\frac{1}{e_{\tau}} \order_{\tau} W(z) = \order_{P_{\tau}} (W(z)(dz)^{g_{\mathfrak{p}}(g_{\mathfrak{p}}+1)/2}) + \frac{g_{\mathfrak{p}}(g_{\mathfrak{p}}+1)}{2} \left( 1- \frac{1}{e_{\tau}} \right).
\end{equation*}

In the case where $P_{\tau}$ is not elliptic, since $e_{\tau} =1$  we simply obtain that
\begin{equation}\label{eq:1notell}
\order_{\tau} W(z) = \wt(P_{\tau}),
\end{equation}
whereas if $P$ is elliptic, in which case $e_{\tau} = q+1$, we have
\begin{equation}\label{eq:1ell}
\order_{\tau} W(z) = (q+1)\wt(P_{\tau}) + q \frac{g_{\mathfrak{p}}(g_{\mathfrak{p}}+1)}{2}.
\end{equation}

We now proceed to the second step of the proof. 

Let first $\tau_0$ be a point in the Drinfeld upper-half space $\Omega$ that is not in the equivalence class of the elliptic point of $X_0(1)$. Since $\mathcal{W}$ is a multiple of 
\begin{equation*}
\prod_{\gamma \in \Gamma_0(\mathfrak{p})\backslash \Gl_2(A)} W |_{k,l} [\gamma],
\end{equation*}
and the map $X_0(\mathfrak{p}) \to X_0(1)$ is unramified above $\tau_0$, we have
\begin{equation}\label{ordernotelliptic}
\order_{\tau_0} \mathcal{W}(z) =\sum_{\substack{P_{\tau} \in Y_0(\mathfrak{p}),\\ \tau \sim \tau_0 }} \order_{\tau} W(z) =  \sum_{\substack{P_{\tau} \in Y_0(\mathfrak{p}),\\ \tau \sim \tau_0 }} \wt(P_{\tau}),
\end{equation}
where $\sim$ denotes $\Gl_2(A)$-equivalence.

We note that in equation (\ref{ordernotelliptic}), the left-hand side is exactly the power of $(x - j(\tau_0))$ appearing in $P(\mathcal{W},x)$ and the right-hand side is exactly the power of $(x - j(\tau_0))$ in $x^{\epsilon(d)}F_{\mathfrak{p}}(x)$.

We now consider the case of $\tau_0$ in the equivalence class of the elliptic point of $X_0(1)$, i.e. $j(\tau_0) = 0$.

\noindent
\underline{The case of $d$ even} If the degree $d$ of the prime polynomial generating $\mathfrak{p}$ is even, then $X_0(\mathfrak{p})$ has two elliptic points, both of which are unramified over $X_0(1)$. The fiber above the elliptic point of $X_0(1)$ in $X_0(\mathfrak{p})$ contains in addition $\frac{q^d-1}{q+1}$ non-elliptic points, each ramified above $P_{\tau_0} \in X_0(1)$ with index $q+1$ \cite[pages 77-78]{gekeler11}. 

Thus if $\tau_0 \in \Omega$ is in the $\Gl_2(A)$-equivalence class of the elliptic point on $X_0(1)$, using equations (\ref{eq:1notell}) and (\ref{eq:1ell}), we have
\begin{equation*}
\order_{\tau_0} \mathcal{W}(z) = 2 q \frac{g_{\mathfrak{p}}(g_{\mathfrak{p}}+1)}{2} +(q+1) \sum_{\substack{\tau \in Y_0(\mathfrak{p}),\\ \tau \sim \tau_0 }} \wt(P_{\tau}).
\end{equation*}

On the other hand, by equation (\ref{jpoly}), we have
\begin{equation*}
\order_{\tau_0} \mathcal{W}(z)= \gamma((q^d+1)g_{\mathfrak{p}}(g_{\mathfrak{p}}+1), g_{\mathfrak{p}}(g_{\mathfrak{p}}+1)) + (q+1)M,
\end{equation*}
where $M$ is the order of vanishing of $P(\mathcal{W},x)$ at $j(\tau_0)=0$.

Combining these two equations we obtain 
\begin{equation}\label{ordereven}
M = \frac{1}{q+1}(qg_{\mathfrak{p}}(g_{\mathfrak{p}}+1) -\gamma((q^d+1)g_{\mathfrak{p}}(g_{\mathfrak{p}}+1), g_{\mathfrak{p}}(g_{\mathfrak{p}}+1))) + \sum_{\substack{\tau \in Y_0(\mathfrak{p}),\\ \tau \sim \tau_0 }} \wt(P_{\tau}).
\end{equation}
For $d$ even, let $\epsilon(d) = \frac{1}{q+1}(qg_{\mathfrak{p}}(g_{\mathfrak{p}}+1) -\gamma((q^d+1)g_{\mathfrak{p}}(g_{\mathfrak{p}}+1), g_{\mathfrak{p}}(g_{\mathfrak{p}}+1))) $.

Equations (\ref{ordernotelliptic}) and (\ref{ordereven}) imply the equality of polynomials
\begin{equation*}
P(\mathcal{W},x) = x^{\epsilon(d)} F_{\mathfrak{p}}(x).
\end{equation*}

\noindent
\underline{The case of $d$ odd} If the degree $d$ of the prime polynomial generating $\mathfrak{p}$ is odd, then $X_0(\mathfrak{p})$ has no elliptic points, and the fiber above the elliptic point of $X_0(1)$ in $X_0(\mathfrak{p})$ contains $\frac{q^d+1}{q+1}$ non-elliptic points, each ramified above $X_0(1)$ with index $q+1$. Thus if $\tau_0$ is in the $\Gl_2(A)$-equivalence class of the elliptic point on $X_0(1)$, we have
\begin{equation*}
\order_{\tau_0} \mathcal{W}(z)= (q+1) \sum_{\substack{\tau \in Y_0(\mathfrak{p}),\\ \tau \sim \tau_0 }} \wt(P_{\tau}).
\end{equation*}

On the other hand, by equation (\ref{jpoly}), we have
\begin{equation*}
\order_{\tau_0} \mathcal{W}(z) = \gamma((q^d+1)g_{\mathfrak{p}}(g_{\mathfrak{p}}+1), g_{\mathfrak{p}}(g_{\mathfrak{p}}+1)) + (q+1)M,
\end{equation*}
where $M$ is the order of vanishing of $P(\mathcal{W},x)$ at $j(\tau_0)=0$.

Combining these two equations we obtain that 
\begin{equation}\label{orderodd}
M = \frac{1}{q+1}\gamma((q^d+1)g_{\mathfrak{p}}(g_{\mathfrak{p}}+1), g_{\mathfrak{p}}(g_{\mathfrak{p}}+1)) + \sum_{\substack{\tau \in Y_0(\mathfrak{p}),\\ \tau \sim \tau_0 }} \wt(P_{\tau}).
\end{equation}
For $d$ odd, let $\epsilon(d) = \frac{1}{q+1}\gamma((q^d+1)g_{\mathfrak{p}}(g_{\mathfrak{p}}+1), g_{\mathfrak{p}}(g_{\mathfrak{p}}+1)) $.

Equations (\ref{ordernotelliptic}) and (\ref{orderodd}) now imply
\begin{equation*}
P(\mathcal{W},x) = x^{\epsilon(d)}F_{\mathfrak{p}}(x),
\end{equation*}
as in the even case, but with a different $\epsilon(d)$.

\end{proof}

We now use the trace map to obtain a form of low weight for $\Gl_2(A)$ that is congruent to $W(z)$ modulo $\mathfrak{p}$.

\begin{theorem}\label{gl2wronskian}
Let $q\geq 3$, then there exists a Drinfeld modular form $F$ of weight $g_{\mathfrak{p}}(g_{\mathfrak{p}}+q^d)$ and type $\frac{g_{\mathfrak{p}}(g_{\mathfrak{p}}+1)}{2}$ for $\Gl_2(A)$ such that 
\begin{equation*}
W(z) \equiv F(z) \pmod{\mathfrak{p}}.
\end{equation*}
\end{theorem}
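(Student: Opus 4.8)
The plan is \emph{not} to transfer $W(z)$ itself to the full modular group, but to transfer instead the weight-$2$ forms out of which $W(z)$ is assembled, since that is exactly the range in which Theorem~\ref{tracetheorem} applies. Recall from Section~\ref{modularwronskian} that $W(z)=W(f_0,\ldots,f_{g_{\mathfrak{p}}-1})=\det\bigl(D_j(f_i)\bigr)_{0\le i,j\le g_{\mathfrak{p}}-1}$ for a basis $\{f_0,\ldots,f_{g_{\mathfrak{p}}-1}\}$ of $M^2_{2,1}(\Gamma_0(\mathfrak{p}))$ whose members have rational, $\mathfrak{p}$-integral $u$-series coefficients at $\infty$. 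Since $q\ge 3$, Theorem~\ref{tracetheorem} provides, for each $i$, a Drinfeld modular form $\phi_i$ of weight $q^d+1$ and type $1$ for $\Gl_2(A)$, again with rational, $\mathfrak{p}$-integral $u$-series coefficients, such that $f_i\equiv\phi_i\pmod{\mathfrak{p}}$. The candidate form is then
\begin{equation*}
F:=\det\bigl(D_j(\phi_i)\bigr)_{0\le i,j\le g_{\mathfrak{p}}-1}=W(\phi_0,\ldots,\phi_{g_{\mathfrak{p}}-1}).
\end{equation*}

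First I would establish the congruence $W(z)\equiv F\pmod{\mathfrak{p}}$. Every index appearing in these Wronskians is at most $g_{\mathfrak{p}}-1$, and by the genus formula~(\ref{genusformula}) one has $g_{\mathfrak{p}}\le 2q^{d-2}<q^d$; hence Corollary~\ref{derivativesok} applies and gives $D_j(f_i)\equiv D_j(\phi_i)\pmod{\mathfrak{p}}$ for all relevant $i$ and $j$. Expanding the two determinants then yields $W(z)\equiv F\pmod{\mathfrak{p}}$; in particular $F\not\equiv 0\pmod{\mathfrak{p}}$ because $v_{\mathfrak{p}}(W(z))=0$, so $F$ is a nonzero rigid-analytic function on $\Omega$ that is analytic at $\infty$, both properties being inherited from the $D_j(\phi_i)$ by Proposition~\ref{uchinosatoh} and the quasimodularity results of Section~\ref{quasimodular}. (If the scalar relating $W(z)$ to the Wronskian of this particular basis is not $1$, it is a $\mathfrak{p}$-adic unit and can be absorbed into $F$.)

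The one step that requires an argument rather than bookkeeping is that $F$ is genuinely \emph{modular} for $\Gl_2(A)$, of the asserted weight and type. Here I would run the usual Wronskian triangularity argument, in the spirit of the column manipulations already used in the proof of Proposition~\ref{T:qodd}. For $\gamma=\left(\begin{smallmatrix}a&b\\c&d\end{smallmatrix}\right)\in\Gl_2(A)$, the transformation formula for $D_n$ on forms for $\Gl_2(A)$ recorded in Section~\ref{quasimodular} shows that the $j$-th column $\bigl(D_j(\phi_i)(\gamma z)\bigr)_i$ equals $(cz+d)^{q^d+1+2j}(\det\gamma)^{-1-j}$ times $\bigl(D_j(\phi_i)(z)\bigr)_i$ plus a $C(z)$-linear combination of the columns $\bigl(D_m(\phi_i)(z)\bigr)_i$ with $m<j$. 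Thus the Wronskian matrix at $\gamma z$ is obtained from the one at $z$ by an upper-triangular change of columns whose $j$-th diagonal entry is $(cz+d)^{q^d+1+2j}(\det\gamma)^{-1-j}$, $0\le j\le g_{\mathfrak{p}}-1$. Taking determinants,
\begin{equation*}
F(\gamma z)=(\det\gamma)^{-\sum_{j=0}^{g_{\mathfrak{p}}-1}(1+j)}(cz+d)^{\sum_{j=0}^{g_{\mathfrak{p}}-1}(q^d+1+2j)}F(z)=(\det\gamma)^{-\frac{g_{\mathfrak{p}}(g_{\mathfrak{p}}+1)}{2}}(cz+d)^{g_{\mathfrak{p}}(g_{\mathfrak{p}}+q^d)}F(z),
\end{equation*}
so by Definition~\ref{modform} $F$ is a Drinfeld modular form of weight $g_{\mathfrak{p}}(g_{\mathfrak{p}}+q^d)$ and type $g_{\mathfrak{p}}(g_{\mathfrak{p}}+1)/2$ for $\Gl_2(A)$; together with $W(z)\equiv F\pmod{\mathfrak{p}}$ this is the theorem. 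The main obstacle is organizational rather than deep: one must ensure that reduction modulo $\mathfrak{p}$ commutes with the hyperderivatives $D_j$ over the whole range $0\le j\le g_{\mathfrak{p}}-1$ (which is why the inequality $g_{\mathfrak{p}}-1<q^d$ extracted from~(\ref{genusformula}) is used), and that the quasimodular error terms cancel in the determinant so that the transferred object is honestly a form for the full group.
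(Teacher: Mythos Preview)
Your proof is correct and follows essentially the same approach as the paper: transfer each basis element $f_i\in M^2_{2,1}(\Gamma_0(\mathfrak{p}))$ to a form of weight $q^d+1$ for $\Gl_2(A)$ via Theorem~\ref{tracetheorem}, apply Corollary~\ref{derivativesok} (using $g_{\mathfrak{p}}-1<q^d$ from the genus formula) to propagate the congruences through the hyperderivatives, and take $F$ to be the Wronskian of the transferred forms. The paper's proof simply asserts the weight and type of the resulting Wronskian, whereas you supply the explicit column-triangularity computation; this is a welcome elaboration but not a different argument.
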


\begin{proof}
We choose a basis $\{f_0, \ldots, f_{g_{\mathfrak{p}}-1}\}$ for the space $M^{2}_{2,1}(\Gamma_0(\mathfrak{p}))$ such that 
\begin{equation*}
W(z) = W(f_0, \ldots, f_{g_{\mathfrak{p}}-1}),
\end{equation*}
and such that for each $i$ $f_i$ has rational, $\mathfrak{p}$-integral $u$-series coefficients at $\infty$.

By Theorem \ref{tracetheorem}, there is a basis $\{F_0, \ldots, F_{g_{\mathfrak{p}}-1}\}$ for the space $M^{2}_{q^d+1,1}(\Gl_2(A))$, all of whose elements have rational, $\mathfrak{p}$-integral $u$-series coefficients and such that $f_i \equiv F_i \pmod{\mathfrak{p}}$. 

As we remarked in the proof of Proposition \ref{rationalityintegrality}, when computing the forms $W(f_0, \ldots, f_{g_{\mathfrak{p}}-1})$ and $W(F_0, \ldots, F_{g_{\mathfrak{p}}-1})$, one needs to compute $D_n$ for $n<q^d$. Thus in all of the cases we will consider, we have that $f_i\equiv F_i \pmod{\mathfrak{p}}$ implies that $D_n(f_i) \equiv D_n(F_i) \pmod{\mathfrak{p}}$ by Corollary \ref{derivativesok}.

Therefore we have 
\begin{equation*}
W(f_0, \ldots, f_{g_{\mathfrak{p}}-1}) \equiv W(F_0, \ldots, F_{g_{\mathfrak{p}}-1})\pmod{\mathfrak{p}}.
\end{equation*}

The form $W(F_0, \ldots, F_{g_{\mathfrak{p}}-1})$ is modular for $\Gl_2(A)$ of weight $g_{\mathfrak{p}}(g_{\mathfrak{p}}+q^d)$ and type $\frac{g_{\mathfrak{p}}(g_{\mathfrak{p}}+1)}{2}$, and we denote it $F$ for simplicity.
\end{proof}



We can now prove Theorem \ref{T:main}:

\begin{proof}[Proof of Theorem \ref{T:main}]
Since $W$ has rational, $\mathfrak{p}$-integral $u$-series coefficients at $\infty$ and is an eigenform of the Fricke involution, Theorem \ref{normtheorem} states that
\begin{equation*}
\mathcal{W} = \widetilde{\N(W)} \equiv W^2 \pmod{\mathfrak{p}},
\end{equation*}
As remarked earlier, $\mathcal{W}$ is a form of weight $(q^d+1)g_{\mathfrak{p}}(g_{\mathfrak{p}}+1)$ and type $g_{\mathfrak{p}}(g_{\mathfrak{p}}+1)$ for $\Gl_2(A)$.

By Theorem \ref{gl2wronskian}, we have
\begin{equation}\label{E:congruence}
\mathcal{W} \equiv F^2 \pmod{\mathfrak{p}}.
\end{equation}

The form $F^2$ is of weight $2g_{\mathfrak{p}}(g_{\mathfrak{p}}+q^d)$ and type $g_{\mathfrak{p}}(g_{\mathfrak{p}}+1)$.

We note now that the proof of Proposition \ref{dobiwagewang} can be adapted say the following: Let $f$ and $f'$ be two Drinfeld modular forms for $\Gl_2(A)$ of weights $k > k'$ and of types $l$ and $l'$, respectively, both with rational $\mathfrak{p}$-integral $u$-series coefficients and not $\equiv 0 \pmod{\mathfrak{p}}$. Then for $\alpha = \frac{k - k'}{q^d-1}$ and $a = \lfloor \frac{\alpha \gamma(q^d-1,0)q + \gamma(k,l)}{q+1}\rfloor$, the polynomial $x^aP(f,x)$ is divisible by $S_{\mathfrak{p}}(x)^{\alpha}$ in $\mathbb{F}_{\mathfrak{p}}[x]$. (We recall that $\mathbb{F}_{\mathfrak{p}}$ is the field $A/\mathfrak{p}$.)

Applying this to equation (\ref{E:congruence}), we have $\alpha = g_{\mathfrak{p}}(g_{\mathfrak{p}}-1)$. Then in $\mathbb{F}_{\mathfrak{p}}[x]$ we have that 
\begin{equation*}
S_{\mathfrak{p}}(x)^{g_{\mathfrak{p}}(g_{\mathfrak{p}}-1)} \mid x^{a} P(\mathcal{W},x) = x^{a+\epsilon(d)} F_{\mathfrak{p}}(x),
\end{equation*} 
where 
\begin{equation}\label{eq:a}
a = \left\lfloor \frac{g_{\mathfrak{p}}(g_{\mathfrak{p}}-1)\gamma(q^d-1,0)q + \gamma((q^d+1)g_{\mathfrak{p}}(g_{\mathfrak{p}}+1),g_{\mathfrak{p}}(g_{\mathfrak{p}}+1))}{q+1}\right\rfloor .
\end{equation}

\noindent
\underline{The case of $d$ even} In this case $j =0 $ is not supersingular at $\mathfrak{p}$, so $x$ does not divide $S_{\mathfrak{p}}(x)$, and we conclude that 
\begin{equation*}
S_{\mathfrak{p}}(x)^{g_{\mathfrak{p}}(g_{\mathfrak{p}}-1)} \mid F_{\mathfrak{p}}(x).
\end{equation*} 
Thus each supersingular $j$-invariant is the reduction modulo $\mathfrak{p}$ of a root of $F_{\mathfrak{p}}(x)$.

By Theorem \ref{T:wronskian}, for $P \in Y_0(\mathfrak{p})$,
\begin{equation*}
\wt(P) = \order_P(W(z)(dz)^{g_{\mathfrak{p}}(g_{\mathfrak{p}}+1)/2})\geq \sum_{i=1}^{g_{\mathfrak{p}}} (n_i(P)-n_i),
\end{equation*}
with $\wt(P)=0$ if $P$ is neither a Weierstrass point nor an elliptic point. Recall also that a Weierstrass point is a point such that 
\begin{equation*}
\sum_{i=1}^{g_{\mathfrak{p}}} (n_i(P)-n_i) >0.
\end{equation*}

By definition (equation (\ref{E:poly})) the polynomial $F_{\mathfrak{p}}(x)$ has zeroes at the Weierstrass points, and possibly also at the elliptic points of $X_0(\mathfrak{p})$, which have $j=0$. Since $j=0$ is not supersingular when $d$ is even, then each supersingular $j$-invariant is the reduction modulo $\mathfrak{p}$ of the $j$-invariant of a Weierstrass point. 
\newline

\noindent
\underline{The case of $d$ odd} As argued in the case of $d$ even, the zeroes of $F_{\mathfrak{p}}$ are either Weierstrass points or elliptic points. Since $X_0(\mathfrak{p})$ does not have elliptic points when $d$ is odd, the zeroes of $F_{\mathfrak{p}}$ are exactly the Weierstrass points.

Since 
\begin{equation*}
S_{\mathfrak{p}}(x)^{g_{\mathfrak{p}}(g_{\mathfrak{p}}-1)} \mid  x^{a+\epsilon(d)} F_{\mathfrak{p}}(x),
\end{equation*} 
where $a$ is as in equation (\ref{eq:a}) and $\epsilon(d)$ is as in the statement of Theorem \ref{T:epsilon}, we conclude that each supersingular $j$-invariant in characteristic $\mathfrak{p}$ except possibly $j=0$ is the reduction modulo $\mathfrak{p}$ of the $j$-invariant of a Weierstrass point.

To conclude that $j=0$ is also the $j$-invariant of a Weierstrass point, we must show that
\begin{equation*}
g_{\mathfrak{p}}(g_{\mathfrak{p}}-1) > a+\epsilon(d),
\end{equation*}
from which it will follow that $x \mid F_{\mathfrak{p}}(x)$. 

We first investigate the number $\epsilon(d) = \frac{1}{q+1}\gamma((q^d+1)g_{\mathfrak{p}}(g_{\mathfrak{p}}+1), g_{\mathfrak{p}}(g_{\mathfrak{p}}+1))$. Since $(q^d+1)g_{\mathfrak{p}}(g_{\mathfrak{p}}+1)$ is divisible by $q+1$ and by the uniqueness of the numbers $\gamma((q^d+1)g_{\mathfrak{p}}(g_{\mathfrak{p}}+1), g_{\mathfrak{p}}(g_{\mathfrak{p}}+1))$ and $\mu((q^d+1)g_{\mathfrak{p}}(g_{\mathfrak{p}}+1), g_{\mathfrak{p}}(g_{\mathfrak{p}}+1))$, satisfying the conditions of (\ref{eq:uniqueness}), we must have
\begin{equation*}
\mu((q^d+1)g_{\mathfrak{p}}(g_{\mathfrak{p}}+1), g_{\mathfrak{p}}(g_{\mathfrak{p}}+1)) = \frac{(q^d+1)g_{\mathfrak{p}}(g_{\mathfrak{p}}+1)}{q+1}
\end{equation*}
and
\begin{equation*}
\gamma((q^d+1)g_{\mathfrak{p}}(g_{\mathfrak{p}}+1), g_{\mathfrak{p}}(g_{\mathfrak{p}}+1)) =0,
\end{equation*}
so $\epsilon(d) = 0$ when $d$ is odd.

Since $d$ is odd, we have that $\gamma(q^d-1,0)=1$ and in light of the work above, the formula for $a$ simplifies to 
\begin{equation*}
a = \left\lfloor \frac{g_{\mathfrak{p}}(g_{\mathfrak{p}}-1)q}{q+1}\right\rfloor.
\end{equation*}

Since 
\begin{equation*}
\left\lfloor \frac{g_{\mathfrak{p}}(g_{\mathfrak{p}}-1)q}{q+1}\right\rfloor \leq  \frac{g_{\mathfrak{p}}(g_{\mathfrak{p}}-1)q}{q+1} < g_{\mathfrak{p}}(g_{\mathfrak{p}}-1),
\end{equation*}
it follows that $j=0$ is also the reduction modulo $\mathfrak{p}$ of the $j$-invariant of a Weierstrass point of $X_0(\mathfrak{p})$.
\end{proof}



\subsection{A refinement of the statement}\label{refinement}

Since $\mathcal{W}$ is of weight $(q^d+1)g_{\mathfrak{p}}(g_{\mathfrak{p}}+1)$ and type $g_{\mathfrak{p}}(g_{\mathfrak{p}}+1)$, $F^2$ is of weight $2g_{\mathfrak{p}}(g_{\mathfrak{p}}+q^d)$ and type $g_{\mathfrak{p}}(g_{\mathfrak{p}}+1)$, and
\begin{equation*}
\mathcal{W} \equiv F^2 \pmod{\mathfrak{p}},
\end{equation*}
we have that $\mathcal{W}$ and $F^2 g_d^{g_{\mathfrak{p}}(g_{\mathfrak{p}}-1)}$ are two forms of the same weight and type that are congruent modulo $\mathfrak{p}$, and therefore their companion polynomials are congruent modulo $\mathfrak{p}$:
\begin{equation*}
P(\mathcal{W},x) \equiv P(F^2 g_d^{g_{\mathfrak{p}}(g_{\mathfrak{p}}-1)},x) \pmod{\mathfrak{p}}.
\end{equation*}
\newline 

\noindent
\underline{The case of $d$ even} Applying Proposition \ref{P:companionpolys} part \ref{evens} $g_{\mathfrak{p}}(g_{\mathfrak{p}}-1)$times, we have
\begin{equation*}
P(\mathcal{W},x) \equiv P(F^2,x) P(g_d,x)^{g_{\mathfrak{p}}(g_{\mathfrak{p}}-1)} \pmod{\mathfrak{p}}.
\end{equation*}

Since $P(\mathcal{W},x) = x^{\epsilon(d)} F_{\mathfrak{p}}(x)$ and $ P(g_d,x) = S_{\mathfrak{p}}(x)$, we have
\begin{equation*}
x^{\epsilon(d)} F_{\mathfrak{p}}(x) \equiv P(F^2,x) S_{\mathfrak{p}}(x)^{g_{\mathfrak{p}}(g_{\mathfrak{p}}-1)} \pmod{\mathfrak{p}}.
\end{equation*}
Therefore the extent to which we can understand the polynomial $P(F^2,x)$ will determine how much more we can understand about the Weierstrass points of $X_0(\mathfrak{p})$ and the quantity $\wt(P)$ defined in this paper. In addition, it is this polynomial which keeps us from obtaining the main result of \cite{ahlgrenono} in full generality in this setting.
\newline

\noindent
\underline{The case of $d$ odd} Applying Proposition \ref{P:companionpolys} part \ref{odds} $g_{\mathfrak{p}}(g_{\mathfrak{p}}-1)$times, we have
\begin{equation*}
P(\mathcal{W},x) \equiv x^{b} P(F^2,x) P(g_d,x)^{g_{\mathfrak{p}}(g_{\mathfrak{p}}-1)} \pmod{\mathfrak{p}},
\end{equation*}
where $b = \lfloor \frac{g_{\mathfrak{p}}(g_{\mathfrak{p}}-1) + \gamma(k,l)}{q+1}\rfloor$. 

Then we have 
\begin{equation*}
F_{\mathfrak{p}}(x) \equiv x^{b} P(F^2,x) P(g_d,x)^{g_{\mathfrak{p}}(g_{\mathfrak{p}}-1)} \pmod{\mathfrak{p}},
\end{equation*}
since $\epsilon(d)=0$ when $d$ is odd.



\section{The order of vanishing of $W(z)$ at the cusps}\label{divisorcusp}

In the discussion surrounding the definition of modular weight (Definition \ref{D:weight}), we avoided considering the valuation of the divisor 
\begin{equation*}
[W(z)]+ \frac{g_{\mathfrak{p}}(g_{\mathfrak{p}}+1)}{2}[dz]
\end{equation*}
at the two cusps of $X_0(\mathfrak{p})$. From the algebraic theory of Weierstrass points developed in Section \ref{theory}, we would expect this divisor to have valuation 0 or at worst positive valuation at the cusps. Unfortunately at present we cannot show this directly, but we proceed to say what we can.



We begin by consider the divisor of $dz$ at the cusps. From explicit computations \cite{gekelerjacobian}, we have that $\frac{1}{u^2}du=-\tilde{\pi}dz$. Recall from Section \ref{infinity} the function $t=u^{q-1}$, which is a uniformizer at the cusps $0$ and $\infty$ for $X_0(\mathfrak{p})$. Then we have 
\begin{equation*}
\frac{1}{t^{q/(q-1)}}dt=\tilde{\pi}dz,
\end{equation*}
and $dz$ has a pole of order 
\begin{equation*}
\frac{q}{q-1}
\end{equation*}
at the cusps $0$ and $\infty$. 

\begin{proposition}\label{goodbasiscusp}
Let $P$ be a cusp of $X_0(\mathfrak{p})$, and write  $\tau=0$ or $\tau=\infty$. Then there is a basis $\{f_i\}_{i=0}^{g_{\mathfrak{p}}-1}$ of $M^2_{2,1}(\Gamma_0(\mathfrak{p}))$ such that:
\begin{equation*}
\order_{\tau}(f_i)= (q-1)i+ q 
\end{equation*}
for each $i$.
\end{proposition}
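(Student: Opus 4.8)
The plan is to run the proof of Proposition~\ref{goodbasis} at a cusp, feeding in the computation of the divisor of $dz$ at the cusps made just above. The one genuinely new input is that the cusps are \emph{not} Weierstrass points of $X_0(\mathfrak{p})$: this is precisely what the proof of Proposition~\ref{armana} shows, since each cusp is a $K$-rational point whose extension to a section of $\overline{M}_0(\mathfrak{p})$ is not supersingular at $\mathfrak{p}$, hence not an osculation point, and $X_0(\mathfrak{p})$ has a classical gap sequence so that osculation points and Weierstrass points coincide. Consequently the canonical orders at a cusp $P$ agree with the canonical orders $(1,2,\dots,g_{\mathfrak{p}}-1)$ of $X_0(\mathfrak{p})$; write $j_0(P)=0$ and $j_i(P)=i$ for $1\le i\le g_{\mathfrak{p}}-1$.

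First I would choose a local parameter $s$ at $P$ and take $[ds]$ as the canonical divisor, so that $v_P([ds])=0$. As in Proposition~\ref{goodbasis}, the jumps of the osculating filtration $\mathcal{L}([ds])\supseteq\mathcal{L}([ds]-P)\supseteq\cdots$ occur exactly at the canonical orders at $P$, so there is a basis $\{F_0,\dots,F_{g_{\mathfrak{p}}-1}\}$ of $\mathcal{L}([ds])$ with $\order_P(F_i)=j_i(P)=i$. Then $\{F_i\,ds\}$ is a basis of the space of regular differentials on $X_0(\mathfrak{p})$, and under the identification of that space with $M^2_{2,1}(\Gamma_0(\mathfrak{p}))$ via $f\mapsto f(z)\,dz$ one gets a basis $\{f_i\}$ of $M^2_{2,1}(\Gamma_0(\mathfrak{p}))$ with $f_i(z)\,dz=F_i\,ds$, hence $\order_P(f_i(z)\,dz)=\order_P(F_i)=i$.

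The last step is to translate $\order_P(f_i(z)\,dz)=i$ into the claimed $u$-series order. Using $t=u^{q-1}$ as uniformizer at $P$, write the $u$-expansion of $f_i$ at the cusp as $\sum_{n\ge\order_\tau(f_i)}a_n u^n=\sum_n a_n t^{n/(q-1)}$, and combine with $\frac{1}{t^{q/(q-1)}}\,dt=\tilde{\pi}\,dz$ from the excerpt to obtain $f_i(z)\,dz=\tilde{\pi}^{-1}\sum_n a_n\,t^{(n-q)/(q-1)}\,dt$. Hence $\order_P(f_i(z)\,dz)=\frac{\order_\tau(f_i)-q}{q-1}$, and equating this with $i$ gives $\order_\tau(f_i)=(q-1)i+q$. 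For the cusp $0$ one applies the same computation to $f_i|_{2,1}[W_{\mathfrak{p}}]$, whose $u$-expansion at $\infty$ is by definition the $u$-expansion of $f_i$ at $0$, together with the fact (recorded above) that $dz$ likewise has a pole of order $q/(q-1)$ at $0$.

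The main obstacle is bookkeeping rather than anything deep: one must be careful that the intrinsic order $\order_P(f_i(z)\,dz)$ of the differential on the curve really equals $\frac{\order_\tau(f_i)-q}{q-1}$, with $\order_\tau$ the normalized $u$-expansion order fixed in Section~\ref{infinity}. At $\infty$ this is transparent from $t=u^{q-1}$ and the formula for $dz$, but at the cusp $0$ it requires unwinding the convention through the Fricke matrix $W_{\mathfrak{p}}$ (equivalently, checking that pulling back $f_i(z)\,dz$ by $W_{\mathfrak{p}}$ agrees with $(f_i|_{2,1}[W_{\mathfrak{p}}])(z)\,dz$ up to a nonzero scalar). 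One should also record that nothing forces $v_P([ds])=0$ a priori, which is exactly why $s$ is taken to be a local parameter at $P$; with that choice the canonical orders at $P$ are read off directly from the valuations of the $F_i$.
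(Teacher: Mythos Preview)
Your proposal is correct and follows essentially the same approach as the paper: invoke Proposition~\ref{goodbasis} at a cusp, use that cusps are not Weierstrass points so the canonical orders there are $(1,\dots,g_{\mathfrak{p}}-1)$, and then convert $\order_P(f_i(z)\,dz)=i$ into $\order_\tau(f_i)=(q-1)i+q$ via $\order_P(f)=\order_\tau(f)/(q-1)$ and $\order_P(dz)=-q/(q-1)$. Your write-up is more explicit about the choice of canonical divisor and about handling the cusp $0$ through $W_{\mathfrak{p}}$, but the argument is the same.
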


\begin{proof}
As in the proof of Proposition \ref{goodbasis}, since the canonical orders at $P$ are $(1,\ldots, g_{\mathfrak{p}}-1)$ (recall that the cusps are not Weierstrass points) we have that there is a basis of $M^2_{2,1}(\Gamma_0(\mathfrak{p}))$ with
\begin{equation*}
\order_P(f_i(z)dz)=i.
\end{equation*}

If $P$ is a cusp of $X_0(\mathfrak{p})$, $\tau=0$ or $\infty$, and $f$ is a Drinfeld modular form for $\Gamma_0(\mathfrak{p})$, we have
\begin{equation*}
\order_P(f) = \frac{\order_{\tau}(f)}{q-1}.
\end{equation*}

Then since
\begin{equation*}
\order_P(f_i(z)dz)=\order_P(f_i)+\order_P(dz) = \frac{\order_{\tau}(f_i(z))}{q-1}-\frac{q}{q-1},
\end{equation*}
the result follows.
\end{proof}

For the next result we will need the following definition: Let $n$ be a positive integer and $q$ be a power of a prime such that the expansion of $n$ in base $q$ is $n = \sum_{i=0}^r n_iq^i$, where each $0 \leq n_i \leq q-1$ for each $i$. Then we write $\| n\|_q = \sum_{i=0}^r n_i$. 

\begin{proposition}\label{ordercusp} 
Let $f$ be analytic at $\infty$, then $\order_{\infty} D_n(f) \geq \order_{\infty}(f) + \| n \|_q$. 
\end{proposition}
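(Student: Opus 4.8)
The plan is to read the claim straight off the explicit formula for the $u$-expansion of $D_nf$ in Proposition \ref{formuladn}. Write $f=\sum_{i\ge 0}a_iu^i$ and set $v=\order_\infty(f)$, so that $a_j=0$ for all $j<v$ (if $f\equiv 0$ there is nothing to prove, $\order_\infty$ being $+\infty$). The case $n=0$ is immediate since $D_0f=f$ and $\|0\|_q=0$, so I may assume $n\ge 1$; then Proposition \ref{formuladn} writes $D_nf=\sum_i b_{n,i}u^i$, where $b_{n,i}$ is a sum over $r\in\{1,\dots,i-1\}$ of terms, the $r$-th of which is $a_{i-r}$ times a scalar whose only relevant feature is the factor $\sum_{q^{n_1}+\cdots+q^{n_r}=n}(d_{n_1}\cdots d_{n_r})^{-1}$. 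It therefore suffices to show $b_{n,i}=0$ whenever $i<v+\|n\|_q$.

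To do this I would fix such an $i$ and examine a single value of $r$ in the sum defining $b_{n,i}$. If $a_{i-r}=0$ the term vanishes. Otherwise $i-r\ge v$, hence $r\le i-v<\|n\|_q$, and I claim the inner sum over representations $n=q^{n_1}+\cdots+q^{n_r}$ is then empty, so the term vanishes anyway. The one genuine ingredient is the elementary fact that the base-$q$ digit sum is subadditive under addition: in a base-$q$ addition every carry lowers the total digit sum by $q-1\ge 0$, so $\|a+b\|_q\le\|a\|_q+\|b\|_q$, and by induction $\|n\|_q\le\sum_{j=1}^r\|q^{n_j}\|_q=r$ for any such representation. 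Thus $r<\|n\|_q$ forces the representation set to be empty. Combining the two cases, every term of $b_{n,i}$ is zero, so $b_{n,i}=0$ and $\order_\infty D_n(f)\ge v+\|n\|_q=\order_\infty(f)+\|n\|_q$.

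In short, the proof is almost pure bookkeeping once Proposition \ref{formuladn} is in hand; the only real step — and hence the "main obstacle," though it is quite mild — is the subadditivity of $\|\cdot\|_q$, which I would either dispatch in one line by tracking carries or simply quote as standard. The remaining care is just in the degenerate cases ($f$ constant or zero, the convention $\|0\|_q=0$) and in noting that we only invoke Proposition \ref{formuladn} for $n\ge 1$, where it is unambiguous.
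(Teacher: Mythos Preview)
Your proof is correct and follows essentially the same route as the paper: both read the claim off Proposition~\ref{formuladn} by observing that the inner sum $\alpha_{n,r}=\sum_{q^{n_1}+\cdots+q^{n_r}=n}(d_{n_1}\cdots d_{n_r})^{-1}$ is empty whenever $r<\|n\|_q$, so a nonzero $b_{n,i}$ forces $i\ge\order_\infty(f)+\|n\|_q$. The paper establishes a bit more along the way (a full congruence characterization of the $r$ for which $\alpha_{n,r}$ can be nonzero), but only the lower bound $r\ge\|n\|_q$ is actually used, and your subadditivity argument for that bound is equivalent to the paper's.
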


\begin{proof}
Let
\begin{equation*}
\alpha_{n,j} = \sum_{\substack{n_1, \ldots, n_j \geq 0 \\ q^{n_1}+ \cdots +q^{n_j}=n}} \frac{1}{d_{n_1} \cdots d_{n_j}},
\end{equation*}
where $d_i$ was defined at the beginning of Section \ref{quasimodular}. Then we have that $\alpha_{n,j} \neq 0$ if and only if $j \equiv \| n \|_q \pmod{q-1}$ and $j \leq n$. Indeed, the least $j$ such that there exists $n_1, \ldots n_j \geq 0$ with $q^{n_1}+ \cdots +q^{n_j}=n$ is $\| n \|_q$. Furthermore, given a tuple $(n_1, \ldots, n_j)$ such that $q^{n_1}+ \cdots +q^{n_j}=n$ and at least one $n_i >0$, we can write another tuple $(m_1, \ldots, m_{j+q-1})$ such that $q^{m_1}+ \cdots +q^{m_{j+q-1}}=n$ by ``unbundling" a term $q^{n_i}$ into $q$ terms of the form $q^{n_i-1}$ if $n_i>0$. This process is no longer possible when each $n_i=0$, in which case we have $q^0 + \ldots +q^0 = n$. This shows that for each $j$ between $\| n \|_q$ and $n$ such that $j \equiv \| n \|_q \pmod{q-1}$, $\alpha_{n,j} \neq 0$. Conversely if there is $(n_1, \ldots, n_j)$ such that $q^{n_1}+ \cdots +q^{n_j}=n$, then
\begin{equation*}
n  = (q^{n_1}-1)+ \cdots +(q^{n_j}-1) + j \equiv j \pmod{q-1}.
\end{equation*}
But applying this same trick to the sum $n = \sum_{i=0}^r n_iq^i$, we have $n \equiv \| n \|_q \pmod{q-1}$.

Using the explicit formula given in Proposition \ref{formuladn}, we have that if $f = \sum_{i=0}^{\infty} a_i u^i$ and $D_n f = \sum_{i=0}^{\infty} b_{n,i}u^i$, then
\begin{equation*}
b_{n,i} = \sum_{r = 1}^{i-1}(-1)^{n+r} \binom{i-1}{r} \alpha_{n,r} a_{i-r}.
\end{equation*}
In light of the remarks above, the only terms that can possibly appear in this sum are those with $r \equiv \| n \|_q \pmod{q-1}$. Therefore the least $i$ for which $b_{n,i}$ is possibly nonzero is one where $i - \| n \|_q \geq \order_{\infty}(f)$.
\end{proof}

\begin{proposition}\label{goodordercusp}
Let $\mathfrak{p}$ be generated by a prime polynomial of degree 3, so that $g_{\mathfrak{p}} = q$. Then if $P$ is the cusp $\infty$ of $X_0(\mathfrak{p})$, we have
\begin{equation*}
\order_P(W(z)(dz)^{g_{\mathfrak{p}}(g_{\mathfrak{p}}+1)/2}) \geq 0.
\end{equation*}
\end{proposition}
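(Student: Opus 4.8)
The plan is to make everything explicit using the basis of $M^2_{2,1}(\Gamma_0(\mathfrak{p}))$ supplied by Proposition~\ref{goodbasiscusp}, to expand the Wronskian determinant over permutations, and to bound the order of vanishing of each summand at $\infty$ by means of Proposition~\ref{ordercusp}. Since $d=3$ is odd, the genus formula gives $g_{\mathfrak{p}}=q$, so $W(z)$ equals, up to a nonzero constant in $C$ (which does not affect orders of vanishing), the determinant $W(f_0,\dots,f_{g_{\mathfrak{p}}-1})=\det\bigl(D_j(f_i)\bigr)_{0\le i,j\le q-1}$ computed with respect to the basis from Proposition~\ref{goodbasiscusp}, for which $\order_{\infty}(f_i)=(q-1)i+q$.

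Next I would expand $W(f_0,\dots,f_{g_{\mathfrak{p}}-1})=\sum_{\sigma}\operatorname{sgn}(\sigma)\prod_{i=0}^{q-1}D_{\sigma(i)}(f_i)$, the sum running over permutations $\sigma$ of $\{0,\dots,q-1\}$. For a single summand, Proposition~\ref{ordercusp} gives $\order_{\infty}\bigl(D_{\sigma(i)}(f_i)\bigr)\ge \order_{\infty}(f_i)+\|\sigma(i)\|_q$, and since every value $\sigma(i)$ lies in $\{0,\dots,q-1\}$ its base-$q$ expansion is trivial, so $\|\sigma(i)\|_q=\sigma(i)$. Because $\sigma$ is a permutation, $\sum_i\sigma(i)=\sum_{n=0}^{q-1}n=\tfrac{q(q-1)}{2}$, independently of $\sigma$; hence each summand, and therefore $W(z)$ itself, has order of vanishing at $\infty$ at least
\[
\sum_{i=0}^{q-1}\bigl((q-1)i+q\bigr)+\frac{q(q-1)}{2}=\frac{q(q-1)^2}{2}+q^2+\frac{q(q-1)}{2}=\frac{q^2(q+1)}{2}.
\]

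Finally I would transfer this to the curve. At the cusp $P=\infty$ the function $t=u^{q-1}$ is a uniformizer, so $\order_P(W(z))=\order_{\infty}(W(z))/(q-1)\ge \tfrac{q^2(q+1)}{2(q-1)}$, while the computation preceding Proposition~\ref{goodbasiscusp} gives $\order_P(dz)=-q/(q-1)$. Consequently
\[
\order_P\bigl(W(z)(dz)^{g_{\mathfrak{p}}(g_{\mathfrak{p}}+1)/2}\bigr)=\order_P(W(z))+\frac{q(q+1)}{2}\Bigl(-\frac{q}{q-1}\Bigr)\ge \frac{q^2(q+1)}{2(q-1)}-\frac{q^2(q+1)}{2(q-1)}=0.
\]
There is no genuine obstacle beyond careful arithmetic bookkeeping; the subtle point, and the reason the argument is confined to $d=3$, is that the lower bound coming from Proposition~\ref{ordercusp} is \emph{exactly} $\tfrac{q^2(q+1)}{2}$, matching the order of the pole of $(dz)^{g_{\mathfrak{p}}(g_{\mathfrak{p}}+1)/2}$, so the final inequality is met with no room to spare. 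For larger odd $d$ one has $g_{\mathfrak{p}}>q$, so some indices $n$ in the range $0,\dots,g_{\mathfrak{p}}-1$ satisfy $\|n\|_q<n$, the sum $\sum_n\|n\|_q$ falls short of $\sum_n n$, and this particular argument no longer suffices.
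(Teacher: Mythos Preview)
Your proof is correct and follows essentially the same approach as the paper: both use the basis from Proposition~\ref{goodbasiscusp}, invoke Proposition~\ref{ordercusp} together with the observation that $\|k\|_q=k$ for $0\le k\le q-1$, bound every term in the Leibniz expansion of the determinant by the same quantity $\tfrac{q^2(q+1)}{2}$, and then pass to the curve using $\order_P(dz)=-q/(q-1)$. Your closing remark on why the argument is confined to $d=3$ also mirrors the paper's own remark following the proposition.
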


\begin{proof}
We choose a basis $\{f_i\}$ of $M^2_{2,1}(\Gamma_0(\mathfrak{p}))$ that satisfies the conclusion of Proposition \ref{goodbasiscusp} at $\infty$. Then 
\begin{equation*}
\order_P(W(f_0, \ldots, f_{g_{\mathfrak{p}}-1})(dz)^{g_{\mathfrak{p}}(g_{\mathfrak{p}}+1)/2})=\order_P(W(z)(dz)^{g_{\mathfrak{p}}(g_{\mathfrak{p}}+1)/2}),
\end{equation*}
so we may work with $W(f_0, \ldots, f_{g_{\mathfrak{p}}-1})$ for convenience.

By Proposition \ref{ordercusp}, for $k= 0, \ldots, g_{\mathfrak{p}}-1=q-1$, we have that
\begin{equation*}
\order_{\infty}(D_{k}(f_l)) \geq (q-1)l+q+\| k\|_q = (q-1)l +q +k,
\end{equation*}
since $\| k\|_q = k$ because $0 \leq k \leq q-1$. When computing the determinant $W(f_0, \ldots, f_{g_{\mathfrak{p}}-1})$, we will be adding terms all of whose order of vanishing at $\infty$ is $\geq \sum_{i=0}^{g_{\mathfrak{p}}-1}((q-1)i+q+i)$. Thus
\begin{equation*}
\order_{\tau} W(f_0, \ldots, f_{g_{\mathfrak{p}}-1}) \geq \sum_{i=0}^{g_{\mathfrak{p}}-1}q(i+1).
\end{equation*}
We have
\begin{equation*}
\sum_{i=0}^{g_{\mathfrak{p}}-1}q(i+1) = q \frac{g_{\mathfrak{p}}(g_{\mathfrak{p}}+1)}{2}.
\end{equation*}


And so
\begin{equation*}
\order_P(W(f_0, \ldots, f_{g_{\mathfrak{p}}-1})(dz)^{g_{\mathfrak{p}}(g_{\mathfrak{p}}+1)/2}) \geq \frac{q}{q-1} \frac{g_{\mathfrak{p}}(g_{\mathfrak{p}}+1)}{2} -  \frac{q}{q-1} \frac{g_{\mathfrak{p}}(g_{\mathfrak{p}}+1)}{2} =0.
\end{equation*}
\end{proof}

\begin{remark}
To obtain Proposition \ref{goodordercusp} for all $\mathfrak{p}$, it would be sufficient to show that for a basis of $M^2_{2,1}(\Gamma_0(\mathfrak{p}))$ satisfying the conclusion of Proposition \ref{goodbasiscusp} at $\infty$,
\begin{equation*}
\order_{\infty}(D_{k}(f_l)) \geq \order_{\infty}(f_l)+k = (q-1)l +q +k,
\end{equation*}
but that is not true. For example, fixing $q=3$ and any $d >3$, we have that $\order_{\infty} (f_1) = 5$ but
\begin{equation*}
\order_{\infty}(D_3(f_1)) = 6 < 5+3 = 8.
\end{equation*}
For this reason we expect that to show that the divisor of $W(z)(dz)^{g_{\mathfrak{p}}(g_{\mathfrak{p}}+1)/2}$ is effective at the cusps will require an intricate and precise study of the action of $D_n$, beyond the scope of what we wish to accomplish in this paper.
\end{remark}

\begin{remark}
We note that it should be straightforward to obtain a result similar to Proposition \ref{goodordercusp} for the cusp $0$ using Lemma \ref{prop:quasimodular}, but we do not need it at the moment.
\end{remark}

\section{A special case}\label{specialcase}

As remarked in Section \ref{refinement}, because of its significance it would be of great interest to compute the reduction modulo $\mathfrak{p}$ of the form $F$ explicitly, or even just its divisor modulo $\mathfrak{p}$. This task, however, involves computing the action of $D_n$ for large $n$, which quickly gets complicated. However, under some rather restrictive conditions we are able to prove Theorem \ref{T:computation} which provides an explicit form which is congruent to $F$ modulo $\mathfrak{p}$, and gives us an analogue of the main theorem of \cite{rohrlich2}. This in turns allows us to prove Theorem \ref{T:ahlgrenono}, which is an analogue of the main theorem of \cite{ahlgrenono}.

We will need some notation: For a system of derivatives $\{ \delta_n \}$ which is a higher derivation, and a positive integer $n$, we will write $W_{\delta}(f_1, \ldots, f_{n}) $ for the quantity
\begin{equation*}
\begin{vmatrix}
f_1 & \delta_{1}(f_1) &\ldots &\delta_{n-1}(f_1)\\
\vdots & & \vdots \\
 f_{n} & \delta_{1}(f_{g}) & \ldots & \delta_{n-1}(f_{n})
\end{vmatrix}.
\end{equation*}
We note that $W_{D}(f_1, \ldots, f_{n}) = W(f_1, \ldots, f_{n})$.

Recall from the proof of Theorem \ref{gl2wronskian} that there exists a basis $\{F_0, \ldots, F_{g_{\mathfrak{p}}-1}\}$ for the space $M^{2}_{q^d+1,1}(\Gl_2(A))$, all of whose elements have rational, $\mathfrak{p}$-integral $u$-series coefficients and such that
\begin{equation}\label{wronskianequiv}
W(z) \equiv W_{D}(F_0, \ldots, F_{g_{\mathfrak{p}}-1})\pmod{\mathfrak{p}}.
\end{equation}
Furthermore, $W_D(F_0, \ldots, F_{g_{\mathfrak{p}}-1})$ was the form which we denoted by $F$.

Let $\partial_n^{(d)}$ be the Serre operator from Section \ref{tool}, we have that $D_n(f)$ and $\partial_n^{(k)}(f)$, for $k$ the weight of $f$, differ by the sum
\begin{equation*}
\sum_{i=1}^{n}(-1)^i\binom{k+n-1}{i}(D_{i-1}E)(D_{n-i}f).
\end{equation*}
We note that the quantity $(-1)^i\binom{k+n-1}{i}(D_{i-1}E)$ depends on $k$ and $n$, but not on $f$. To ease notation, we write $M_{D}$ for the matrix appearing in the definition of $W_D(F_0, \ldots, F_{g_{\mathfrak{p}}-1})$, and $M_{\partial}$ for the matrix appearing in the definition of $W_{\partial}(F_0, \ldots, F_{g_{\mathfrak{p}}-1})$. Then we have that the $(n+1)$st column of $M_{\partial}$ is equal to the $(n+1)$st column of $M_{D}$ plus a linear combination of earlier columns of $M_{D}$. Since we are taking a determinant, we conclude that 
\begin{equation}\label{serreequation}
W_D(F_0, \ldots, F_{g_{\mathfrak{p}}-1})=W_{\partial}(F_0, \ldots, F_{g_{\mathfrak{p}}-1}).
\end{equation}

In order to proceed with the computation, we first restrict our attention to the case where $d=3$. In that case $g_{\mathfrak{p}}=q$ and the canonical orders of $X_0(\mathfrak{p})$ are $(1, \ldots, q-1)$.

We now give a basis for the space $M^{2}_{q^3+1,1}(\Gl_2(A))$. We recall that the algebra of Drinfeld modular forms for $\Gl_2(A)$ is generated by $g$, a Drinfeld modular form of weight $q-1$ and type 0 which is not a cusp form, and $h$, a Drinfeld modular form of weight $q+1$ and type 1 with a simple zero at the cusp. We note that both $g$ and $h$ have integral $u$-series coefficients at $\infty$. To give a basis for $M^{2}_{q^3+1,1}(\Gl_2(A))$ with integral $u$-series coefficients is thus simply equivalent to enumerating all monomials $g^ah^b$ with $a \geq 0$, $b \geq 2$ and such that
\begin{equation*}
a(q-1) + b(q+1) = q^3+1
\end{equation*}
and $b \equiv 1 \pmod{q-1}$.
This is easily done and we get that
\begin{equation*}
g^{n(q+1)}h^{q^2-q+1-n(q-1)}, \qquad 0\leq n \leq q-1
\end{equation*}
is a basis of Drinfeld modular forms with integral $u$-series coefficients for the space we are interested in. 

Therefore there is a constant $a\in K$ such that
\begin{align*}
W_{D}(F_0, \ldots, F_{g_{\mathfrak{p}}-1}) & =W_{\partial}(F_0, \ldots, F_{g_{\mathfrak{p}}-1})\\
& = a W_{\partial}(h^{q^2-q+1}, \ldots, g^{q^2-1}h^q),
\end{align*}
where the first equality is equation (\ref{serreequation})  and so
\begin{equation}\label{explicitcong}
W(z) \equiv a W_{\partial}(h^{q^2-q+1}, \ldots, g^{q^2-1}h^q) \pmod{\mathfrak{p}}
\end{equation}
by equation (\ref{wronskianequiv}).


As before we make the convention that if $f$ is a Drinfeld modular form of weight $k$, then $\partial (f)=\partial_1^{(k)}(f)$. Then if $1 \leq n < p$ for $p$ odd, we have $\partial^n f = n! \partial_n^{(k)} f$, where as before the exponent of $n$ on $\partial$ denotes the $n$-fold iteration. Therefore when $q=p$, the computation of $W_{\partial}(h^{p^2-p+1}, \ldots, g^{p^2-1}h^p)$ can be performed using the fact that $\partial(g)=-h$ and $\partial(h)=0$, and we get
\begin{equation*}
W_{\partial}(h^{p^2-p+1}, \ldots, g^{p^2-1}h^p)=g^{\frac{p^2(p-1)}{2}}h^{\frac{p^2(p+1)}{2}}.
\end{equation*}

Thus equation (\ref{explicitcong}) becomes
\begin{equation}\label{congeq}
W(z) \equiv a g^{\frac{p^2(p-1)}{2}}h^{\frac{p^2(p+1)}{2}} \pmod{\mathfrak{p}}
\end{equation}


We now investigate the value of the constant $a$. The first non-zero $u$-series coefficient of $g^{\frac{p^2(p-1)}{2}}h^{\frac{p^2(p+1)}{2}}$ has index $\frac{p^2(p+1)}{2}$. Since the leading coefficient of $h$ is $-1$ and the leading coefficient of $g$ is $1$, the leading coefficient of $g^{\frac{p^2(p-1)}{2}}h^{\frac{p^2(p+1)}{2}}$ is $(-1)^{(p+1)/2}$. 

Denote by $n_0$ the index of the first non-zero coefficient of the $u$-series expansion of $W(z)$ at $\infty$. Then the order of vanishing of $W(z)(dz)^{\frac{p(p+1)}{2}}$ at $\infty$ is 
\begin{equation*}
\frac{n_0}{p-1}-\frac{p}{p-1} \left(\frac{p(p+1)}{2}\right).
\end{equation*}
Since this quantity must be non-negative by Proposition \ref{goodordercusp}, we have that $n_0\geq \frac{p^2(p+1)}{2}$.  

Equation (\ref{congeq}) then forces $n_0= \frac{p^2(p+1)}{2}$. Since the leading coefficient of $W(z)$ is a power of $\pi$ by definition and $(-1)^{(p+1)/2}$ is not zero modulo $\mathfrak{p}$, this forces the leading coefficient of $W(z)$ to be $1$ and
\begin{equation*}
1 \equiv a (-1)^{(p+1)/2} \pmod{\mathfrak{p}},
\end{equation*}
from which it follows that 
\begin{equation*}
a\equiv (-1)^{(p+1)/2} \pmod{\mathfrak{p}}.
\end{equation*}
This proves the following theorem:

\begin{reptheorem}{T:computation}
If $p$ is odd, $\pi \in \mathbb{F}_p[T]$ has degree 3, $\mathfrak{p}$ is the ideal generated by $\pi$, and the Wronskian on $X_0(\mathfrak{p})$ is denoted by $W(z)$, then $W(z)$ has leading coefficient $1$ and rational, $\mathfrak{p}$-integral $u$-series coefficients at $\infty$ and furthermore we have
\begin{equation*}
W(z)\equiv (-1)^{(p+1)/2}g^{\frac{p^2(p-1)}{2}}h^{\frac{p^2(p+1)}{2}} \pmod{\mathfrak{p}}.
\end{equation*}
\end{reptheorem}

Thanks to this congruence we may now prove:

\begin{reptheorem}{T:ahlgrenono}
If $p$ is odd, $\pi \in \mathbb{F}_p[T]$ has degree 3, $\mathfrak{p}$ is the ideal generated by $\pi$, then we have
\begin{equation*}
\prod_{P \in Y_0(\mathfrak{p})} (x- j(P))^{\wt(P)} \equiv \prod_{\substack{\phi/\overline{\mathbb{F}}_{\mathfrak{p}} \\ \phi\text{ supersingular}}}(x-j(\phi))^{g_{\mathfrak{p}}(g_{\mathfrak{p}}-1)} \pmod{\mathfrak{p}},
\end{equation*}
where $g_{\mathfrak{p}}$ is the genus of the curve $X_0(\mathfrak{p})$.
\end{reptheorem}

\begin{proof}
Still in the case where $d=3$ and $q=p$ is an odd prime, we have that 
\begin{equation*}
G \defi \left( (-1)^{(p+1)/2}g^{\frac{p^2(p-1)}{2}}h^{\frac{p^2(p+1)}{2}} \right)^2
\end{equation*}
 is of weight $2p(p^3+p)$ and type $p(p+1) \equiv 2 \pmod{p-1}$. We have
\begin{equation*}
\mu(2p(p^3+p), 2) = 2p^3-2p^2+3p-1
\end{equation*}
and
\begin{equation*}
\gamma(2p(p^3+p), 2) = p-1.
\end{equation*}
In turn, this allows us to compute
\begin{equation*}
P(G, x) = x^{(p-1)^2}.
\end{equation*}

We also have that $\epsilon(d) =0$ since $d$ is odd, as shown at the end of the proof of Theorem \ref{T:main}.

We apply Proposition \ref{P:companionpolys} part \ref{odds}, $g_{\mathfrak{p}}(g_{\mathfrak{p}}-1) = p(p-1)$ times. Since $p(p-1) = 2 +(p-2)(p+1)$, and $\gamma(2p(p^3+p), 2) = p-1$, we will be in the case where $\gamma(k+p^3-1,2) = p$ exactly $p-1$ times. Therefore
\begin{align*}
 F_{\mathfrak{p}}(x) &\equiv P(\mathcal{W},x)  \pmod{\mathfrak{p}}\\
& \equiv (-x)^{p-1}P(G,x) P(g_3,x)^{p(p-1)} \pmod{\mathfrak{p}}\\
& \equiv x^{p(p-1)} P(g_3,x)^{p(p-1)} \pmod{\mathfrak{p}} \\
& \equiv S_{\mathfrak{p}}(x)^{p(p-1)},
\end{align*}
since $p-1$ is even.

This concludes the proof since $g_{\mathfrak{p}}=p$ in this case.
\end{proof}

\bibliography{bibliography} {}

\providecommand{\bysame}{\leavevmode\hbox to3em{\hrulefill}\thinspace}
\providecommand{\MR}{\relax\ifhmode\unskip\space\fi MR }
\providecommand{\MRhref}[2]{%
  \href{http://www.ams.org/mathscinet-getitem?mr=#1}{#2}
}
\providecommand{\href}[2]{#2}
\begin{thebibliography}{10}

\bibitem{ahlgrenono}
S.~Ahlgren and K.~Ono, \emph{Weierstrass points on ${X}_0(p)$ and supersingular
  $j$-invariants}, Mathematische Annalen \textbf{325} (2003), 355--368.

\bibitem{armanaphd}
C.~Armana, \emph{Torsion rationelle des modules de {D}rinfeld}, Ph.D. thesis,
  Universit{\'{e}} Paris Diderot - Paris 7, 2008.

\bibitem{baker}
M.~Baker, \emph{Specialization of linear systems from curves to graph. {W}ith
  an appendix by {B}rian {C}onrad.}, Algebra \& Number Theory \textbf{2}
  (2008), no.~6, 613--653.

\bibitem{bosserp}
V.~Bosser and F.~Pellarin, \emph{Hyperdifferential properties of {D}rinfeld
  quasi-modular forms}, International Mathematics Research Notices (2008).

\bibitem{bosserp2}
\bysame, \emph{On certain families of {D}rinfeld quasi-modular forms}, Journal
  of Number Theory (2009), 2952--2990.

\bibitem{carlitz4}
L.~Carlitz, \emph{On certain functions connected with polynomials in a {G}alois
  field}, Duke Mathematical Journal \textbf{1} (1935), 137--168.

\bibitem{dobiwagewang}
D.~Dobi, N.~Wage, and I.~Wang, \emph{Supersingular rank two {D}rinfel'd modules
  and analogs of {A}tkin's orthogonal polynomials}, International Journal of
  Number Theory \textbf{5} (2009), 885--895.

\bibitem{drinfeld}
V.G. Drinfel'd, \emph{Elliptic modules ({R}ussian)}, Matematicheskii Sbornik
  (Novaya Seriya) \textbf{94} (1974), 594--627.

\bibitem{fresnel}
J.~Fresnel and M.~van~der Put, \emph{G{\'{e}}om{\'{e}}trie analytique rigide et
  applications}, Progress in Mathematics, vol.~18, Birkh{\"{a}}user, 1981.

\bibitem{gekeler11}
E.-U. Gekeler, \emph{Drinfeld-{M}oduln und modulare {F}ormen {\"{u}}ber
  rationalen {F}unktionenk{\"{o}}rpern}, Bonner Mathematische Schriften, vol.
  119, 1980.

\bibitem{gekelermodularcurve}
\bysame, \emph{Drinfeld {M}odular {C}urves}, Lecture Notes in Mathematics, vol.
  1231, Springer-Verlag, 1986.

\bibitem{gekelerhecke}
\bysame, \emph{{\"{U}}ber {D}rinfeld'sche {M}odulkurven vom {H}ecke-{T}yp},
  Compositio Mathematica \textbf{57} (1986), 219 -- 236.

\bibitem{gekeler}
\bysame, \emph{On the coefficients of {D}rinfeld modular forms}, Inventiones
  Mathematicae \textbf{93} (1988), 667--700.

\bibitem{gekeler17}
E.-U. Gekeler and U.~Nonnengardt, \emph{Fundamental domains of some arithmetic
  groups over function fields}, International Journal of Mathematics \textbf{6}
  (1995), 689--708.

\bibitem{gekelerjacobian}
E.-U. Gekeler and M.~Reversat, \emph{Jacobians of {D}rinfeld modular curves},
  Journal f{\"{u}}r die reine und angewandte Mathematik \textbf{476} (1996), 27
  -- 93.

\bibitem{gerritzen}
L.~Gerritzen and M.~van~der Put, \emph{{S}chottky groups and {M}umford curves},
  Lecture Notes in Mathematics, vol. 817, Springer-Verlag, 1980.

\bibitem{goldschmidt}
D.M. Goldschmidt, \emph{Algebraic {F}unctions and {P}rojective {C}urves},
  Graduate Texts in Mathematics, vol. 215, Springer, 2003.

\bibitem{gosseisenstein}
D.~Goss, \emph{$\pi$-adic {E}isenstein series for function fields}, Compositio
  Mathematica \textbf{41} (1980), 3--38.

\bibitem{hasseschmid}
H.~Hasse and H.L. Schmid, \emph{{\"{U}}ber die {A}usnahmeklassen bei abstrakten
  hyperelliptischen {F}unktionenk{\"{o}}rpern}, Journal f{\"{u}}r die reine und
  angewandte Mathematik \textbf{176}, 184.

\bibitem{kanekokoike}
M.~Kaneko and M.~Koike, \emph{On extremal quasimodular forms}, Kyushu Journal
  of Mathematics \textbf{60} (2006), 457--470.

\bibitem{kiehl1}
R.~Kiehl, \emph{Der {E}ndlichkeitssatz f{\"{u}}r eigentliche {A}bbildungen in
  der nichtarchimedischen {F}unktionentheorie}, Inventiones Mathematicae
  \textbf{2} (1967), 191--214.

\bibitem{kiehl2}
\bysame, \emph{Theorem {A} und {T}heorem {B} in der nichtarchimedischen
  {F}unktionentheorie}, Inventiones Mathematicae \textbf{2} (1967), 256--273.

\bibitem{ogg}
A.P. Ogg, \emph{On the {W}eierstrass points of ${X}_0(n)$}, Illinois Journal of
  Mathematics \textbf{22} (1978), no.~1, 31--35.

\bibitem{rohrlich1}
D.E. Rohrlich, \emph{Some remarks on {W}eierstrass points}, Progress in
  Mathematics, vol.~26, pp.~71--78, Birkh{\"{a}}user, 1982.

\bibitem{rohrlich2}
\bysame, \emph{Weierstrass points and modular forms}, Illinois Journal of
  Mathematics \textbf{29} (1985), no.~1, 134--141.

\bibitem{schmidt}
F.K. Schmidt, \emph{Zur arithmetischen {T}heorie der algebraischen
  {F}unktionen. {II}. {A}llgemeine {T}heorie der {W}eierstra{\ss}punkte},
  Mathematische Zeitschrift \textbf{45} (1939), no.~1, 75--96.

\bibitem{stohrvoloch}
K.{-}O. St{\"{o}}hr and J.F. Voloch, \emph{Weierstrass points and curves over
  finite fields}, Proceedings of the London Mathematical Society. Third Series
  \textbf{52} (1986), 1--19.

\bibitem{uchinosatoh}
Y.~Uchino and T.~Satoh, \emph{Function field modular forms and higher
  derivations}, Mathematische Annalen. \textbf{311} (1998), 439--466.

\bibitem{vincentthesis}
C.~Vincent, \emph{Drinfeld modular forms modulo $\mathfrak{p}$ and
  {W}eierstrass points on {D}rinfeld modular curves}, Ph.D. thesis, University
  of Wisconsin -- Madison, 2012.

\bibitem{normandtrace}
\bysame, \emph{On the trace and norm map from ${\Gamma}_0(\mathfrak{p})$ to
  ${\Gl}_2({A})$}, Journal of Number Theory \textbf{142} (2014), 18--43.

\end{thebibliography}
\bibliographystyle{amsplain}

\end{document}